\DeclareMathOperator{\Div}{div}
\DeclareMathOperator{\DIV}{Div}
\DeclareMathOperator{\Dom}{Dom}
\DeclareMathOperator{\Ric}{Ric^{\#}}
\newcommand{\D}{{\rm I \! D}} 
\newcommand{\E}{{\mathbb E}} 
\newcommand{\F}{{\mathcal F}} 
\newcommand{\HH}{{\mathcal H}} 
\newcommand{\I}{{\mathcal I}} 
\newcommand{\LL}{{\mathcal L}} 
\newcommand{\R}{{{\mathbf R} }}
\newcommand{\pnabla}[1][]{{\nabla\!\!\!\!\!\!\nabla^{\,#1}}}
\newcommand{\RR}{{\mathcal R}} 
\newcommand{\pathR}{{\mathbb{R}}} 
\newcommand{\pathT}{{\mathbb{T}}}
\newcommand{\V}{{\mathcal V}}
\newcommand{\W}{{\mathcal W}} 
\newcommand{\WW}{{\mathcal W}} 
\def\Ric{{\mathop {\rm Ric} }}
\def\div{{\mathop{\rm div}}}
\def\ker{{\mathop{\rm Ker}}}
\def\image{{\mathop{\rm Image}}}
\def\id{{\mathop {\rm Id}}}
\def\s.t.{{\mathop {\rm s.t.}}}
\def\ev{{\mathop {\rm ev}}}
\def\paral{/\kern-0.55ex/} 
\def\parals_#1{/\kern-0.55ex/_{\!#1}} 
\def\n#1{|\kern-0.24em|\kern-0.24em|#1|\kern-0.24em|\kern-0.24em|} 
\newtheorem{theorem}{Theorem}[section] 
\newtheorem{proposition}[theorem]{Proposition} 
\newtheorem{lemma}[theorem]{Lemma} 
\newtheorem{corollary}[theorem]{Corollary}
\theoremstyle{definition}
\newtheorem{remark}[theorem]{Remark}
\begin{document}
\title{The vanishing of $L^2$ harmonic one-forms on based path spaces} 
\author{K. D. Elworthy\footnote{Mathematics Institute, University of Warwick, Coventry, CV4 7AL, UK}
and Y. Yang\footnote{Department of Mathematics, University of Bristol, Bristol,  BS8 1TW, UK}} 
\maketitle  
\begin{abstract} 
We prove the triviality of the first $L^2$ cohomology class of based path spaces of Riemannian manifolds furnished with Brownian motion measure, and the consequent vanishing of $L^2$ harmonic one-forms.  
We give explicit formulae for closed and co-closed one-forms expressed as differentials of functions and co-differentials of $L^2$ two-forms, respectively; these are considered as extended Clark-Ocone formulae. 
A feature of the proof is the use of the temporal structure of path spaces to relate a rough exterior derivative operator on one-forms to the exterior differentiation operator used to construct the de Rham complex and the self-adjoint Laplacian on $L^2$ one-forms. This Laplacian is shown to have a spectral gap.
\end{abstract} 
\textbf{Keywords:} 
$L^2$ cohomology, Hodge decomposition, $L^2$ harmonic forms, path space, Banach manifold, Wiener measure, Malliavin calculus, Markovian connection, Clark-Ocone formula, isometry subspace, spectral gap.
\\
\textbf{MSC2010:} 
58J65, 60H07, 58A12, 58A14.
\\
\textbf{Acknowledgements:} 
The second author was partially funded by an Early Career Fellowship while at the Warwick Institute of Advanced Study.
\section{Introduction}
\paragraph{A.}
It is a well-known classical theorem that a smooth vector field $V$ on $\R^n$ is a gradient if and only if its derivative $DV(x)\in \LL(\R^n;\R^n)$  is symmetric at each point $x$.  This is equivalent to saying that a smooth differential one-form $\phi:\R^n\to (\R^n)^*$ is the derivative of a real-valued function if and only if the derivative  of $\phi$ at $x$,  $D\phi(x)\in \LL(\R^n;\LL(\R^n;\R)) \simeq \LL(\R^n,\R^n;\R)$,
 is symmetric. If the bilinear map $D\phi(x)$ is considered as an element of $\LL(\otimes^2\R^n;\R)$, this is equivalent in turn to the vanishing of $D\phi(x)$ on the subspace $\wedge^2\R^n$ of skew-symmetric two-tensors.
\par
For a smooth manifold $M$, the corresponding condition on a differential one-form $\phi$ is invariantly expressed by the vanishing of the exterior derivative $d^1\phi:\wedge^2TM\to \R$
 given by, for any two vector fields $V_1, V_2$ on $M$, 
\begin{equation}
\label{eq:Palais}
2(d^1\phi)_x(V_1(x)\wedge V_2(x))
=L_{V_1}(\phi(V_2))(x)-L_{V_2}(\phi(V_1))(x)
-\phi\left( [V_1,V_2](x)\right),
\end{equation}
where $L_V$ denotes Lie differentiation in the direction $V$.
\par
This condition is only necessary in general. The first de Rham cohomology  group $H^1(M;\R)$ measures the extent to which it fails to be sufficient: 
\begin{equation}
\label{eq:H^1}
H^1(M;\R):= \frac{\ker(d^1)}{\image(d)}, 
\end{equation}
where $d$ refers to the usual differentiation of functions on $M$.
\par
It is immediate that formula (\ref{eq:Palais}) agrees with the definition of $d^1\phi$ as the anti-symmetrisation of the covariant derivative 
\[
\nabla\phi\in \LL(TM;\LL(TM;\R)) \simeq \LL(TM,TM;\R)
\] 
for any torsion-free connection.  
For an arbitrary connection we have
\begin{equation} 
\label{eq:extder}
2(d^1\phi)(v_1\wedge v_2)= (\nabla_{v_1}\phi)(v_2)-(\nabla_{v_2}\phi)(v_1)
+\phi (T(v_1,v_2)),  v_1,v_2\in T_xM, 
\end{equation}
where $x\in M$ and $T:TM\oplus TM\to TM$ is the torsion tensor.
\par
Note our convention of having the factor $2$ in these definitions. This is in agreement with \cite{elworthy2008l2} \cite{elworthy2010stochastic} \cite{yang2011thesis}  and Kobayashi and Nomizu \cite{kobayashi1969foundations}. It is essentially forced by our wish to treat
the spaces of exterior powers of any vector space as subspaces of the corresponding tensor product spaces, 
but the Hodge-Kodaira Laplacian will no longer have the usual well known form.
\paragraph{B.}
The above makes sense for  general  Banach manifolds $\mathcal{M}$ (see, e.g., \cite{lang2002introduction}), though suitable completions need to be taken for the tensor products. In fact, when $\mathcal{M}$ is infinite dimensional, we let $\otimes^2T_x\mathcal{M}$ denote the completed tensor product using the largest cross norm, i.e., the projective tensor product, and similarly for its subspace $\wedge^2T_x\mathcal{M}$. Then two-forms are sections of the dual bundle to $\wedge^2T\mathcal{M}$.
\par
When $G$ and $H$ are Hilbert spaces, we use $G\otimes H$ for the standard Hilbert space completion of the tensor products, so there is the natural isometry $G\otimes H\to \LL_2(H;G)$ onto the space of Hilbert-Schmidt operators. 
\paragraph{C.} 
When our manifold has a suitable Borel measure and a Riemannian metric 
(or a given smooth family of norms on its tangent spaces), it makes sense to  consider forms which are in $L^2$.  In finite dimensions, if the manifold $M$ is complete Riemannian and the measure smooth, the exterior differentiation on smooth forms is closable, leading to a closed operator which we still write as $d^1$, from a dense domain in $L^2\Gamma T^*M$, the space of $L^2$ sections of the cotangent bundle, to $L^2\Gamma (\wedge^2TM)^*$.
\par
The first $L^2$ cohomology group is then the vector space $L^2H^1(M;\R)$ defined by equation (\ref{eq:H^1}) but using only $L^2$ forms. There is also a Hodge decomposition
\begin{equation}
\label{eq:Hodge }
L^2\Gamma T^*M= \overline{\image(d)}\bigoplus \mathbf{H}^1(M)\bigoplus \overline{\image(d^{1*})}, 
\end{equation}
where $d^{1*}$ denotes the adjoint of $d^1$, the overlining denotes the (topological) closure, and $\mathbf{H}^1(M)$ denotes the space of \emph{harmonic} one-forms, the intersection of the kernel of $d^1$ with that of $d^*$.
\paragraph{D}
The space $C_{x_0}:= C_{x_0}([0,T]; M)$ of continuous paths from a fixed interval $[0,T]$ into a compact Riemannian manifold $M$, starting at a given point $x_0$ of $M$, has a natural smooth Banach manifold structure \cite{eellsgeometry}.  The tangent space $T_\sigma C_{x_0}$  at a point $\sigma$ consists of continuous paths $v:[0,T]\to TM$ lying over $\sigma$, with $v(0)=0\in T_{x_0}M$.  Differential forms on this manifold as described above will be called \emph{geometric} forms, to distinguish them clearly from the $H$-forms which we describe below.  Examples are the smooth cylindrical forms, which are the pull backs $\ev_{\underline{t}}^*\psi$ of smooth forms $\psi$ on the product manifold $M^{\underline{t}}$ for arbitrary finite subsets ${\underline{t}}$ of $[0,T]$, where $\ev_{\underline{t}}:C_{x_0}\to M^{\underline{t}}$ is the restriction map (essentially the multiple evaluation map). 
\par
Furnish $C_{x_0}$ with its Brownian motion measure, say, $\mu$. Following  Gross \cite{gross1967potential} it is standard to have a differential calculus based on differentiation in the directions of certain Hilbert spaces. For this we use the ``Bismut" subspaces $\HH_\sigma $ of $T_\sigma C_{x_0}$, defined for each path $\sigma$ by 
\[ 
\HH_\sigma=\left\{v\in T_\sigma C_{x_0}:v_t= \parals_t^\sigma h_t, t\in[0,T],\text{ some } h\in L^{2,1}_0([0,T];T_{x_0}M)\right\} .
\]
Here $\parals_t^\sigma:T_{x_0}M\to T_{\sigma_t}M$ denotes parallel translation along $\sigma$ using the Levi- Civita connection of $M$. Note that such objects are only defined for almost all paths.  
These $ \HH_\sigma$ become Hilbert spaces continuously included in the geometric tangent spaces $T_\sigma C_{x_0}$ under the  \emph{damped} inner products $\langle   -, -\rangle_\sigma$; see below for more details or \cite{elworthy2006geometric} for an overview. We let $L^2\Gamma \HH$, $L^2\Gamma \HH^*$, etc., denote the spaces of $L^2$ $H$-vector fields and $L^2$ $H$-one-forms, respectively; in other words, the $L^2$ sections of the relevant ``bundles".  As usual in Malliavin calculus, $H$-differentiation of $G$-valued cylindrical functions, for $G$ a separable Hilbert space, extends to give a closed densely defined operator $d$ from its domain in $L^2(C_{x_0}; G)$ to $L^2\Gamma \LL_2(\HH;G)$. Denote its domain by $\D^{2,1}G$, or simply $\D^{2,1}$ when $G=\R$. The corresponding gradient operator, $f\mapsto \nabla f$, has the same domain with values in $G \otimes L^2\Gamma \HH $,  or just $L^2\Gamma\HH$ when $G=\R$.
\par
There is a natural connection on $\HH$, the damped Markovian connection,  giving a closed covariant derivative operator $\pnabla$, from the domain $\D^{2,1}\HH$ in $L^2\Gamma\HH$ to $L^2\Gamma \LL(\HH;\HH)$; see Section \ref{sec:Ito map} below.  Its torsion tensor $\pathT$ will be considered as an $L^p$ section,  for all $1\le p<\infty$, of the bundle of  continuous skew-symmetric bilinear maps $\LL_{skew}(\HH,\HH; TC_{x_0})$ from the Bismut tangent bundle to the tangent bundle of $C_{x_0}$;  see Appendix B of \cite{elworthy2008l2} but note the misprint there, where $\LL_{2}(\wedge^2\HH; TC_{x_0})$ should be $\LL_{skew}(\HH,\HH; TC_{x_0})$.  
The curvature operator, denoted by $\pathR$, will be considered as an $L^p$ section of  $\LL(\wedge^2T_\sigma C_{x_0} ; \wedge^2T_\sigma C_{x_0})$.
\par
In general we adopt the same notation as used in \cite{elworthy2008l2}. An exception is that here  $\F^{x_0}_t$ refers to the natural filtration on $C_{x_0}$ and $\F^\I_t$ to the filtration on the Wiener space generated by the solution from $x_0$ of a stochastic differential equation (SDE), defined in Section \ref{sec:Ito map} below.
\paragraph{E.} 
Following Shigekawa's work \cite{shigekawa2003vanishing} on the Hodge theory for the Wiener space, it might be hoped that an $L^2$ de Rham theory for $C_{x_0}$ could be based on ``forms" which are sections of  the dual spaces to the exterior powers $\wedge^q\HH$ of the Bismut tangent bundle. However, the Lie bracket of sections of $\HH$ may not be sections of $\HH$ in the presence of curvature (Cruzeiro and Malliavin \cite {cruzeiro1996renormalized}, Driver \cite{driver1999lie}), so the straightforward application of the usual definition of exterior derivative cannot give a closable operator acting between $L^2$ spaces of sections of these bundles (Leandre \cite {leandre2002analysis}). This can be seen from formula (\ref{eq:Palais})  for one-forms and is also shown in the following analogue of  equation (\ref {eq:extder}) for a cylindrical one-form $\phi$ on $C_{x_0}$: for almost all $\sigma\in C_{x_0}$, 
\begin{equation} 
\label{eq:pextder}
2(d^1\phi)(v_1\wedge v_2)= \pnabla_{v_1}\phi (v_2)-\pnabla_{v_2}\phi(v_1)+\phi\left(\pathT(v_1,v_2)\right), \quad v_1,v_2\in \HH_\sigma, 
\end{equation}
where the torsion does not in general take values in $\HH$.
\par
There have been many efforts to circumvent the problem; see Leandre  \cite{ leandre2002analysis} for a survey.  Elworthy and Li \cite{elworthy2000special} proposed to replace, for $q\ge 2$, the Hilbert spaces $\wedge^q \HH_\sigma$ by a family of Hilbert spaces $\HH^{(q)}_\sigma$, which are continuously included in $\wedge^q T_\sigma C_{x_0}$, while keeping the exterior derivative a closure of the classical exterior derivative on smooth cylindrical forms. An $H$-$q$-form will be a section of $\HH^{(q)*}$.   A detailed description of the case $q = 2$ is given in \cite{elworthy2008l2}, where $\HH^{(2)}$ is shown to be a deformation inside $\wedge^2 T_\sigma C_{x_0}$ of the exterior product $\wedge^2\HH$ of the Bismut tangent bundle by the curvature of the damped Markovian connection.  
The analysis in \cite{elworthy2000special, elworthy2008l2} proves the closability of exterior differentiation on the corresponding $L^2$ $H$-one-forms, 
defines a self-adjoint Hodge-Kodaira Laplacian on such $L^2$ $H$-one-forms, and establishes the resulting Hodge decomposition as in equation (\ref{eq:Hodge }), 
where ${d}^1$ is the closure of the geometrically defined exterior derivative. It holds that ${d}^1 d=0$.  In addition, by Fang's version of the Clark-Ocone formula \cite{fang1994inegalite}, described below, the image of $d$ is closed, so every cohomology class in 
\[
L^2H^1 (C_{x_0}) =\frac{\ker( d^1)}{\image(d)}
\]
has a unique representative in $\mathbf{H}^1(C_{x_0})$, the space of $L^2$ harmonic one-forms.  
\par
After introducing some notation and a few preliminary results, we prove a Clark-Ocone formula for $L^2$ $H$-one-forms in Theorem \ref{th:mnfd_CO_1} and Corollary \ref{co:manifold_CO_short} below, with a version for co-closed one-forms in Corollary \ref{cor:mnfd_co-closed}.  
This implies immediately that  $L^2H^1 (C_{x_0})=\{0\}$, so all $L^2$ harmonic forms vanish. Moreover, the image of $d^1$ is closed, the Hodge Laplacian for one-forms has a spectral gap, and we have an improved decomposition
\begin{equation}
\label{eq: path_Hodge1}
L^2\Gamma\HH^*= \image(d)\bigoplus\image({d}^{1*}).
\end{equation}
A similar vanishing theorem was given in \cite{elworthy2010stochastic} for $L^2H^1 (C_{x_0})$ when $M$ is a symmetric space, based on results from \cite{yang2011thesis}.
\subsection{Paths on groups; other approaches}
When $M$ has a Lie group structure  with bi-invariant metric, the problems mentioned above in the definition of an $L^2$ de Rham complex over its path space disappear if the Bismut tangent space is defined using the flat left or right invariant connection. The complex can be defined using exterior powers of $\HH$ and its cohomology vanishes as shown by Fang and Franchi \cite{fang1997differentiable}. Fang and Franchi  \cite{fang1997rham} also defined the complex, with a Hodge decomposition, for based loops on such a Lie group. More recently, Aida \cite{aida2011vanishing} showed that the resulting first $L^2$ cohomology group vanishes when  the group is simply connected.
\par
Aida used techniques from rough path theory combined with elements taken from Kusuoka's approach \cite{kusuoka1991rham} of considering a ``submanifold"  of the Wiener space, which is in some sense a model for the path or loop space on a given general compact Riemannian manifold. Kusuoka  \cite{kusuoka1991rham} constructed an $L^2$ Hodge theory in this context, with partial results on the computation of the $L^2$ cohomology for loop spaces. 
\par
Leandre \cite{leandre2002analysis} developed other approaches to de Rham theory on path and loop spaces furnished with Brownian motion measures. One of these was to get over the difficulty of defining the exterior derivative by interpreting terms such as $\phi([V_1,V_2])$ as Stratonovich stochastic integrals \cite{leandre1996cohomologie}. For based paths this led to vanishing cohomology; for based loops it gave rise to the usual  cohomology of the based loop space. However, his theories did not involve a Hodge Laplacian.
\section{It\^o maps and the damped Markovian connection.}
\label{sec:Ito map}
Our main tool is the  It\^o map of a suitable SDE on $M$, as a substitute for measure class preserving charts. Here we recall the notation and basic facts from \cite{elworthy1993conditional, elworthy1999geometry,elworthy2000special, elworthy2007ito, elworthy2008l2}.
\paragraph{A.}
Using the Levi-Civita connection on $M$, let $\frac{D}{dt}$ denote the usual covariant differentiation defined along almost all paths in $C_{x_0}$.  Let $\frac{\D}{dt}$ denote the damped version defined by $\frac{\D}{dt}=\frac{D}{dt}+\frac{1}{2} \Ric^{\sharp}$,  
and $W_t
: T_{x_0}M\to T_{\sigma_t}M$ the damped parallel translation defined by $\frac{\D}{dt}W_t=0$.  Here $\Ric^{\sharp}: TM\to TM$ corresponds to the Ricci curvature via $\langle \Ric^{\sharp}_x(v_1), v_2\rangle = \Ric(v_1, v_2)$, for any  $v_1,v_2\in T_xM$. 
\par
It is often convenient to use $L^2TC_{x_0}$, the $L^2$ tangent bundle of $C_{x_0}$. It is a $C^\infty$ Hilbert bundle  over $C_{x_0}$, whose fibre at a path $\sigma$ consists of measurable vector fields $V:[0,T]\to TM$  along $\sigma$ such that $\int_0^T |V_t|^2_{\sigma_t} ~dt <\infty$, with the natural inner product.  Then $\frac{\D}{dt}:\HH\to L^2TC_{x_0}$ determines an almost surely bijective map, with inverse $\W$ given by
\[
\W(V)_t=W_t\int_0^t W_s^{-1}V_s ~ds.
\]
Thus $\HH$ inherits a ``bundle" structure and a Riemannian metric, and the inner product we use for vectors $V_1,V_2$ in $\HH_\sigma$ is
\[
\int_0^T\langle\frac{\D}{ds}V_1,\frac{\D}{ds}V_2\rangle_{\sigma_s}~ds.
\]
\paragraph{B.}
Choose a smooth surjective vector bundle map $X:  \underline\R^m\to TM$  of the trivial $\R^m$-bundle over $M$ into the tangent bundle of $M$, for some $m\in \mathbb{N}$, which induces the Riemannian metric of $M$ and its Levi-Civita connection in the sense of \cite{elworthy1999geometry}. This means that, if $Y_x:T_xM\to \R^m$ is the pseudo inverse of $X(x)$ for each $x\in M$, then $Y_x=X(x)^*$ and the covariant derivative $\nabla _vU$ of a vector field $U$ in the direction of some $v\in T_xM$ is given by 
\[
\nabla _vU= X(x)d[y\mapsto Y_y U(y)](v).
\]
A basic property of this covariant derivative is (\cite{elworthy1999geometry} Proposition 1.1.1)
\begin{equation}
\label{eq:LJW_connection}
\nabla_vX(x)(e)=0, \quad \forall v\in T_xM, e\in \ker(X(x))^\perp. 
\end{equation}
\par
Let  $\{B_t\}_{t\in[0,T]}$ be the canonical Brownian motion on $\R^m$. Given $x_0\in M$, the solutions $\{x_t\}_{t\in[0,T]}$ of the SDE
\begin{equation}
\label{eq:SDE} 
dx_t=X(x_t)\circ dB_t
\end{equation}
are Brownian motions on $M$.  Denote by $C_0:=C_0([0,T];\R^m)$ the classical Wiener space, with the natural filtration $\{\F_t\}_{t\in[0,T]}$ and the Wiener measure $\gamma$. 
The It\^o map is the solution map $\I:C_0\to C_{x_0}$ of SDE (\ref{eq:SDE}), a measure-preserving map between $(C_0, \F, \gamma)$ and $(C_{x_0}, \F^{x_0}, \mu_{x_0})$, with $\I_*\gamma = \mu_{x_0}$. 
The filtration generated by $\I$ is denoted by $\{\F_t^\I\}_{t\in[0,T]}$. 
\par
For almost all $w\in C_0$, the $H$-derivative of $\I$ at $w$ can be considered as  a continuous linear map $T_w\I:H\to T_{x_.}C_{x_0}$. 
For almost all $\sigma\in C_{x_0}$ and $h\in H$, we define 
\[
\overline{T \I}_{\sigma}(h)=\E\left[T_w\I(h)|\I(w)=\sigma\right]. 
\]
In general, we denote by $\overline{f}(\sigma)$ the conditional expectation of an integrable function $f$ on $C_0$ given $\I=\sigma$, 
which gives a function on $C_{x_0}$.
For a discussion of the conditional expectation of vector bundle valued processes, see \cite{elworthy1993conditional,elworthy1999geometry}. 
\par 
Since the connection defined by the SDE (\ref{eq:SDE}) is the same as the one defining $\HH$ and its inner product, the map $\overline{T \I}_{\sigma}$ gives a projection $\overline{T \I}_{\sigma}:H\to \HH_\sigma$ for almost all $\sigma\in C_{x_0}$ (\cite{elworthy2007ito} Property 3.1) with 
\begin{equation}
\label{eq:barTI}
\overline{T \I}_{\sigma}(h)_t
=W_t\int_0^tW_s^{-1}X(\sigma_s)\dot{h}_s ds
=\W_t \left(X(x_.)(\dot{h})\right). 
\end{equation}
\par
Relatedly, we have the push-forward map $\overline{T \I(-)}_{\sigma}$ mapping any $L^2$ $H$-vector field $h$ on $C_0$ to an $\HH$-vector field $\overline{T \I(h)}$ on $C_{x_0}$, given by 
\[
\overline{T \I(h)}_{\sigma} = \E\left [T_w\I(h(w))|\I(w)=\sigma\right ], \quad \textup{ a.e. }\sigma\in C_{x_0}.
\]
This is a continuous linear map from $L^2(C_0; H)$ to $L^2\Gamma \HH$ (\cite{elworthy2000special} Theorem 2.2), 
and for $h\in \F^{x_0}_T$,  we have $h=\bar{h}\circ\I$, 
so 
\begin{equation}
\label{eq:2TI}
\overline{T \I(h)}_{\sigma}= \overline{T \I}_{\sigma}(\bar h).
\end{equation} 
By Lemma 9.2 in \cite{elworthy2008l2}, identity (\ref{eq:2TI}) also holds for an $\F_.$-adapted $H$-vector field $h$ on $C_0$ . 
\par
\paragraph {C.} Following \cite{elworthy2007ito}, we use the map $X:M\times \R^m\to TM$ in the SDE (\ref{eq:SDE}) to define
$\tilde{X}: C_{x_0} \times L^2([0,T]; \R^m)\to L^2TC_{x_0}$ by
\[(\tilde{X}(\sigma)h)_t=X(\sigma_t)(h_t), \quad \forall \sigma\in C_{x_0} , t\in [0,T], h\in L^2([0,T]; \R^m),\]
and its right inverse $\tilde{Y}_\sigma: L^2T_\sigma C_{x_0} \to L^2([0,T]; \R^m)$ by
\[\tilde{Y}_{\sigma}(k)_t=Y_{\sigma_t}(k_t),\quad \forall k\in L^2T_{\sigma}C_{x_0}.\]  
We also define $\mathbb{X}: C_{x_0} \times H
\to \HH$ by
\[
\mathbb{X}(\sigma)(h)=\overline{T \I}_{\sigma}(h)=\WW\left(\tilde{X}(\sigma)(\dot h)\right), \quad \forall \sigma\in C_{x_0} , h\in H, 
\]
with the 
right inverse $\mathbb{Y}_{\sigma}: \HH_{\sigma}\to H$ given by
\begin{equation}
\label{eq:bold_Y}
\mathbb{Y}_{\sigma}(k)_t=\int_0^t Y_{\sigma_s}(\frac{\D}{ds}k_s)ds,\quad \forall k\in \HH_{\sigma}.
\end{equation}
\par
The $L^2([0,T]; \R^m)$-valued one-form $\tilde{Y}$ induces on $C_{x_0}$ 
the pointwise connection $\tilde\nabla$, defined  for vector fields $U\in\Dom(\tilde\nabla)=\D^{2,1}(L^2TC_{x_0})$ by
\[
\tilde\nabla_v U=\tilde X(\sigma)d[\alpha\mapsto \tilde Y_{\alpha}U(\alpha)](v), \quad \sigma\in C_{x_0}, v\in T_{\sigma}C_{x_0}.
\]
The pointwise connection is metric for the $L^2$ metric, and torsion-free if $\nabla$ is chosen to be torsion-free, as is assumed here. We can use the almost surely defined map $\frac{\D}{d.}:\HH\to L^2TC_{x_0}$ to pull back $\tilde\nabla$ and obtain a metric connection, the damped Markovian connection $\pnabla$, on $\HH$:
\begin{equation}
\label{eq:connection_relation}
\pnabla= (\frac{\D}{d.})^{-1} \tilde{\nabla} \frac{\D}{d.}.
\end{equation}
Equivalently, given $U\in \D^{2,1}\HH$ and $v\in \HH_\sigma$,  for almost all  $\sigma\in C_{x_0}$, we have 
\begin{equation}
\label{eq:DMconnection}
\pnabla_vU= \mathbb{X}(\sigma) d(\mathbb{Y}U)_\sigma(v).
\end{equation}
\paragraph{D.} We also need the splitting of $\{B_t\}_{t\in 0,T]}$ into relevant and redundant noise  \cite{elworthy1993conditional, elworthy1999geometry}.  
Since $X(x_0)$ is surjective, we have the splitting 
\[
\R^m = \ker(X(x_0))^\perp\times  \ker (X(x_0)),   
\]
with independent Brownian motions
$\tilde{B}:\![0,T]\!\times\!C_0\!\to\ker (X(x_0))^\perp\subset\R^n$ and
$\beta:\![0,T]\!\times\!C_0\!\to\ker (X(x_0))\subset\R^{m-n}$, 
as described in \cite{elworthy1999geometry}, such that $\{\tilde{B}_t\}_{t\in[0,T]}$ and $\{x_t\}_{t\in[0,T]}$ have the same filtration and 
\begin{equation}
\label{eq:splitting}
dB_t= \tilde{\parals_t}^{x} d\tilde{B}_t + \tilde{\parals_t}^{x} d\beta_t,
\end{equation}
where the map $\tilde{\paral}:[0,T] \times C_{x_0} \to O(m)$ is sample continuous and adapted to $\{\F^{x_0}_t\}_{t\in [0,T]}$, with $O(m)$ being the orthogonal group of $\R^m$, such that $\tilde{\parals_0}^{x} =\id_{\R^m}$ and the orthogonal transformation $\tilde{\parals_t}^{x} $ maps $\ker (X(x_0))$ to $\ker(X(x_t))$.  
We usually suppress the superscript $^x$ in the parallel translations and write simply $\parals_t$ and $\tilde{\parals_t}$.
\par
For $y\in M$, let $K(y)$ be the projection of $\R^m$ onto $\ker(X(y))$, and  
\[
K^{\perp}(y)=\id_{\R^m} - K(y)=Y_yX(y)
\] 
the projection onto the orthogonal complement of $\ker(X(y))$.  Then 
\[
\tilde{B}_t=\int_0^t \tilde{\parals_s}^{-1}K^{\perp}(x_s)dB_s
\quad\mbox{ and }\quad
\beta_t=\int_0^t\tilde{\parals_s}^{-1}K(x_s)dB_s.
\] 
\section{A preliminary Clark-Ocone formula for one-forms}
We write $-\div$ for the adjoint of the gradient operator $\nabla$.  It acts as a closed operator from $\Dom(\div)\subset G\otimes L^2(C_0;H)\simeq L^2(C_0;G\otimes H)$ to $L^2(C_0;G)$, for $G$ a separable Hilbert space. 
Given a function $g: C_0\to G$ in $\D^{2,1}$ and $V:C_0\to H$, we define $\nabla_Vg:C_{x_0}\to G$ by $\nabla_Vg(x)=dg_x(V(x))$. If $V$ is in the domain of $\div$, we have
 \[
\div (g\otimes V)=g\div V+\nabla _Vg.
\] 
We use the same notation when working on $C_{x_0}$.
\par
We identify $u\in\otimes^2 T_\sigma C_{x_0}$ with its evaluations $u_{s, t}\in T_{\sigma_s}M\otimes T_{\sigma_t}M$, continuous in $(s, t)$. 
Thus,  a vector $u\in\otimes^2 T_\sigma C_{x_0}$ is in $\otimes^2\HH$ if and only if we can write 
\[
u_{s, t} =(\WW_s\otimes \WW_t)(\frac{\D}{d.}\otimes \frac{\D}{d.})u,
\]
with $(\frac{\D}{d.}\otimes \frac{\D}{d.})u\in\otimes^2 L^2 T_\sigma C_{x_0}$. 
\par
Given two vector spaces $G$ and $K$, let $\tau:G\otimes K\to K\otimes G$ be the canonical flip map, i.e., $\tau(g\otimes k)=k\otimes g$, extended naturally to completed tensor products. 
 For inner product spaces $G$ and $ K$, let 
 $_{(1)}\!\langle-,h\rangle: G\otimes K\to K$, for $h\in G$,  
 be defined by 
 \[
_{(1)}\!\langle g\otimes k, h\rangle= \langle g,h\rangle_G k, \quad \forall g\in G, k\in K,
\]
and similarly $_{(2)}\!\langle-,l\rangle: G\otimes K\to G$, for $l\in K$,
by 
\[ 
_{(2)}\!\langle g\otimes k, \ell\rangle=g \langle k,l\rangle_K,  \quad \forall g\in G, k\in K. 
\]
Observe that $(\tau u)_{t,s} = \tau (u_{s,t})$ for $u\in\otimes^2 T_\sigma C_{x_0}$. 
\subsection{Differentiation of divergences}
\paragraph{The classical Wiener space.} 
The well-known commutation relationship between the derivative and divergence operators on the classical Wiener space is concisely expressed as 
$[\nabla, -\div]=\id_H$ in Nualart \cite{nualart2006malliavin}.  The following was given for abstract Wiener spaces by {\"U}st{\"u}nel and Zakai \cite{ustunel1995random} under slightly stronger conditions; see also \cite{yang2011thesis} for a proof.
\begin{lemma}[{\"U}st{\"u}nel and Zakai \cite{ustunel1995random}, Nualart \cite{nualart2006malliavin}]
\label{lem:UnZcommutation}
Given $U\in\D^{2,1}(C_0; H)$ and $\tau\nabla U\in\Dom(\div)$, we have $\div U \in \D^{2,1}$ and 
\[
\nabla(\div U)=\div\tau\nabla U -U. 
\]
In other words,  under these conditions,   we have $\div U \in \D^{2,1}$, and if $h\in H$, 
\begin{equation}
\label{eq:AWS_commutation}
\nabla_h(\div U)(x)=\langle (\div \tau\nabla U)(x), h\rangle_H - \langle   U(x),h\rangle_H.
\end{equation}
If $U\in\D^{2,2}(C_0; H)$ and $V\in\D^{2,1}(C_0; H)\cap L^\infty(C_0; H)$, we also have 
\begin{equation}
\label{eq:AWS-commutation-trace}
\nabla_V(\div U)
=\div\nabla_V U - \langle   U,V\rangle_H - \langle   \tau\nabla U,\nabla V\rangle_{H\otimes H}.
\end{equation}
\end{lemma}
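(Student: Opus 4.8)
The plan is to establish the identity first on a dense class of $H$-valued functions, then extend by the closedness of $\nabla$ and $\div$, and finally deduce the trace formula \eqref{eq:AWS-commutation-trace} from \eqref{eq:AWS_commutation} by contracting against $V$.

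First I would verify \eqref{eq:AWS_commutation} on the linear span of $U=\sum_j F_j h_j$ with $F_j\in\D^{2,1}$ smooth cylindrical and $h_j\in H$ deterministic. For a single term $U=Fh_0$, the product rule $\div(Fh_0)=F\div h_0+\nabla_{h_0}F$ reduces the computation of $\nabla_h(\div U)$, for $h\in H$, to three ingredients: the product rule for $\nabla$; the elementary identity $\nabla_h(\div h_0)=-\langle h_0,h\rangle_H$, which is just the direction-$h$ derivative of the Wiener integral $\div h_0=-\int_0^T\dot h_0\,dB$; and the symmetry $\nabla_h\nabla_{h_0}F=\nabla_{h_0}\nabla_h F$ of the second $H$-derivative of a smooth cylindrical function. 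Computing $\div\tau\nabla U$ on the same class by the product rule, the role of the flip $\tau$ — together with the symmetry of $\nabla^2F$ — is precisely what makes $\langle\div\tau\nabla U,h\rangle_H$ coincide with $\div(\nabla_h U)$, so that \eqref{eq:AWS_commutation} becomes the classical commutation relation $[\nabla,-\div]=\id_H$. This computation is the heart of the matter, but it is entirely routine.

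Next I would extend to general $U\in\D^{2,1}(C_0;H)$ with $\tau\nabla U\in\Dom(\div)$. Choosing smooth $U_n\to U$ in the $\D^{2,1}$-norm with $\tau\nabla U_n\to\tau\nabla U$ in the graph norm of $\div$, the closedness of $\div$ gives $\div U_n\to\div U$ and $\div\tau\nabla U_n\to\div\tau\nabla U$ in $L^2$; since \eqref{eq:AWS_commutation} holds for each $U_n$ and $\nabla$ is closed, it follows that $\div U\in\D^{2,1}$ and the identity passes to the limit. The delicate point — and the step I expect to be the main obstacle — is exactly this approximation: one must produce a sequence converging simultaneously in the $\D^{2,1}$-norm and in the graph norm of $\div$ applied to $\tau\nabla U$, working only from the hypothesis $\tau\nabla U\in\Dom(\div)$, which is strictly weaker than $U\in\D^{2,2}$. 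This gap is essentially what separates the sharp statement from the slightly stronger hypotheses under which {\"U}st{\"u}nel and Zakai originally argued, and handling the closed, unbounded operator $\div$ (rather than a bounded one) is what forces the limiting argument to be done with care.

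Finally, to obtain \eqref{eq:AWS-commutation-trace} I would contract \eqref{eq:AWS_commutation} with a vector field $V\in\D^{2,1}(C_0;H)\cap L^\infty(C_0;H)$, giving $\nabla_V(\div U)=\langle\div\tau\nabla U,V\rangle_H-\langle U,V\rangle_H$, and then commute the contraction past the divergence. The product rule for $\div$ — moving the random factor $V$ through the divergence — converts $\langle\div\tau\nabla U,V\rangle_H$ into $\div\nabla_V U$ minus the correction term $\langle\tau\nabla U,\nabla V\rangle_{H\otimes H}$ arising from differentiating $V$; here the hypothesis $U\in\D^{2,2}$ is exactly what guarantees $\nabla_V U\in\Dom(\div)$, so that the manipulation is legitimate. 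The only thing to watch in this last step is the bookkeeping of the tensor slots under $\tau$ and keeping track of which factor $\div$ acts on.
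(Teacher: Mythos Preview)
The paper does not supply a proof of this lemma: it is quoted from \"Ust\"unel--Zakai and Nualart, with a pointer to \cite{yang2011thesis} for details. So there is no in-paper argument to compare against.

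Your outline is the standard one and the computation on cylindrical $U=Fh_0$ is correct; the identification of $\langle\div\tau\nabla U,h\rangle_H$ with $\div(\nabla_h U)$ via the symmetry of $\nabla^2F$ is exactly the mechanism behind $[\nabla,-\div]=\id_H$. The step you flag as the main obstacle is a real gap as written: from the bare hypothesis $\tau\nabla U\in\Dom(\div)$ there is no obvious way to produce smooth $U_n$ converging \emph{simultaneously} in $\D^{2,1}$ and in the graph norm of $\div\circ\tau\nabla$, because the latter is not controlled by the former. The usual fixes are either (i) to work directly with the Wiener chaos expansion of $U$, where $\div$ and $\nabla$ act as explicit shift operators on the multiple integrals and the identity $\nabla\div U=\div\tau\nabla U-U$ becomes a termwise algebraic check with convergence read off from the $L^2$ norms, or (ii) to assume the slightly stronger $U\in\D^{2,2}$, under which your approximation goes through and which is in any case all that the paper needs downstream (cf.\ the remark preceding the lemma about \"Ust\"unel--Zakai's ``slightly stronger conditions''). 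Your derivation of \eqref{eq:AWS-commutation-trace} from \eqref{eq:AWS_commutation} by contracting with $V$ and using the product rule for $\div$ is fine; the hypothesis $U\in\D^{2,2}$ is indeed what makes $\nabla_V U\in\Dom(\div)$, and $V\in L^\infty$ keeps the correction term $\langle\tau\nabla U,\nabla V\rangle$ integrable.
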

When $U$ and $V$ are adapted, we only need to assume $U\in\D^{2,1}(C_0; H)$ and $V\in L^\infty(C_0;H)$ to obtain $\nabla_V U\in \Dom(\div)$, $\div U\in\D^{2,1}$, and 
\begin{equation}
\label{eq:AWS-commutation-adapted}
\nabla_V(\div U)=\div\nabla_V U - \langle   U,V\rangle_H.
\end{equation}
This is because the term $\langle \tau\nabla U,\nabla V\rangle_{H\otimes H}$ in  (\ref{eq:AWS-commutation-trace}) vanishes for adapted $U$ and $V$, reflecting the fact that adapted processes can be moved inside It\^o integrals. In particular, $\div\nabla_V U=(\div\nabla_-U)(V)$, where we treat  $\div\nabla_-U$ as a Hilbert-Schmidt operator valued integral.
\paragraph{The path space over $M$.} 
On the path space $C_{x_0}$, the following lemma extends slightly the commutation formula of Cruzeiro and Fang \cite{cruzeiro1997anl2estimate} (Theorem 3.2), with a different proof: while Cruzeiro and Fang  \cite{cruzeiro1997anl2estimate} used the stochastic development, we use the solution map of a suitable SDE on $M$.
\par
We let $\{\sigma_t\}_{0\le t\le T}$ denote both the canonical process on  $M$ and a generic element of $C_{x_0}$, and write the martingale part of the Stratonovich integral with respect to $\circ d\sigma_t$ as $d\{\sigma\}_t$.  
Thus $d\{\sigma\}_t= \parals_t d\breve{B}_t$, where $\breve{B}$ is the stochastic anti-development of our Brownian motion $\{\sigma_t\}_{t\in[0, T]}$ on $M$ using the given connection, and $d\{\sigma\}_t$ can be used to integrate suitable progressively measurable integrands.  
\par
Next, we recall a few useful results from \cite{elworthy2007ito}: 
\begin{eqnarray}
&\cdot& \mbox{Corollary 4.3}:
F\in \D^{2,1}(C_{x_0}; \R)\implies F\circ \I \in\D^{2,1}(C_0;\R); \label{fact1}\\
&\cdot& \mbox{Theorem 6.1}: 
F\circ \I\in \D^{2,1}(C_0;\R) \implies F\in \mathbb{W}^{2,1}(C_{x_0};\R);\label{fact2}\\
&\cdot& \mbox{Proposition 7.3}:
F\circ \I\in \D^{2,2}(C_0;\R) \implies F\in \D^{2,1}(C_{x_0};\R).\label{fact3}
\end{eqnarray}
Here the weak Sobolev space $\mathbb{W}^{2,1}$ is defined as the domain of the adjoint of the restriction of $d^*$ to $\D^{2,1}\HH^*$, so   
\[
 \mathbb{W}^{2,1}=\Dom((d^*|_{\D^{2,1}\HH^*})^*),
, \quad\mbox{ and } \D^{2,1}\subset \mathbb{W}^{2,1}.
\]
\begin{lemma}
\label{lem:commutation}
Suppose a vector field $U\in \D^{2,1}\HH$ is adapted to $\{\F_t^{{x_0}}\}_{t\in[0,T]}$. Then $\pnabla_{-} U$ is adapted and $\Div U\in\D^{2,1}$, with 
\begin{equation}
\label{eq:mnfd_commutation}
d(\Div U)(V) =-\int_0^T \langle   \frac{\D}{dt} (\,\pnabla_{-} U),  d\{\sigma\}_t\rangle_{\sigma_t} (V) - \langle  U, V\rangle_{\HH},
\end{equation}
for any $H$-vector field $V$ on $C_{x_0}$. 
If in addition $V$ is adapted, we have
\begin{equation}
\label{eq:mnfd_commutation_adapted}
d(\Div U)(V) = \Div(\,\pnabla_{V} U)\, - \langle   U, V\rangle_{\HH}.
\end{equation}
\end{lemma}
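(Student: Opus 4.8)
The plan is to transport the problem to the classical Wiener space $C_0$ via the It\^o map $\I$, apply the already-established commutation formula there (Lemma \ref{lem:UnZcommutation}), and then push the resulting identity back down to $C_{x_0}$ by conditional expectation. The first step is to lift $U$ to an $H$-valued Wiener functional: using the right inverse $\mathbb{Y}$ of $\mathbb{X}=\overline{T\I}$ from (\ref{eq:bold_Y}), set $u:=(\mathbb{Y}U)\circ\I:C_0\to H$. Because $Y_y=X(y)^*$ with $X(y)X(y)^*=\id_{T_yM}$, the map $\mathbb{Y}_\sigma:\HH_\sigma\to H$ is a fibrewise isometric embedding, so $\langle u,v\rangle_H=\langle U,V\rangle_{\HH}\circ\I$ where $v:=(\mathbb{Y}V)\circ\I$; in particular this pairing is already $\I$-measurable. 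Since $U$ is $\{\F^{x_0}_t\}$-adapted and $\{\F^\I_t\}\subset\{\F_t\}$, the functional $u$ is $\{\F_t\}$-adapted, and from the structure of $\mathbb{Y}$ and $\I$ together with fact (\ref{fact1}) one obtains $u\in\D^{2,1}(C_0;H)$. The adaptedness of $\pnabla_{-}U$ then follows from that of $U$, since by (\ref{eq:DMconnection}) it is built from $d(\mathbb{Y}U)$ evaluated in $\HH$-directions.

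Next I would record the divergence dictionary $\Div U=\overline{\div u}$. This is obtained by integration by parts: for a test function $f$ on $C_{x_0}$, writing $\langle\nabla^{C_{x_0}}f,U\rangle_{\HH}=\langle\nabla^{C_0}(f\circ\I),u\rangle_H$ fibrewise (again using that $\mathbb{Y}$ is an isometry and $\overline{T\I}=\mathbb{X}$), integrating over $C_{x_0}$ against $\mu=\I_*\gamma$, and comparing with the definition of $\div$ on $C_0$, one reads off $(\Div U)\circ\I=\E[\div u\mid\I]$. With $u$ adapted I then apply the adapted commutation formula (\ref{eq:AWS-commutation-adapted}) on $C_0$, in its unpaired form $\nabla_{-}(\div u)=\div\nabla_{-}u-u$, where $\div\nabla_{-}u$ is the Hilbert--Schmidt-operator-valued integral of the text. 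This simultaneously yields $\div u\in\D^{2,1}(C_0)$ and the identity to be pushed down.

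The heart of the argument is to differentiate the conditional expectation $\Div U=\overline{\div u}$ and to translate the two terms of the classical formula. For the differentiation I would use that, because the driving noise splits via (\ref{eq:splitting}) into the relevant noise $\tilde{B}$ (which generates $\I$) and the independent redundant noise, differentiation along $\HH$-directions commutes with the conditioning, giving $d(\Div U)(V)=\overline{\nabla_{v}(\div u)}$ for the lift $v$ of $V$, with derivatives in redundant directions annihilated by $\overline{T\I}$. Feeding in (\ref{eq:AWS-commutation-adapted}), the term $\overline{u}$-against-$V$ produces $\langle U,V\rangle_{\HH}$ by the isometry, while $\overline{\div\nabla_{-}u}$ produces the martingale term: converting the $\R^m$-valued It\^o integral on $C_0$ into the manifold stochastic integral through the identification $d\{\sigma\}_t=\parals_t\,d\breve{B}_t$, and using (\ref{eq:DMconnection}) to recognise $\frac{\D}{dt}(\pnabla_{-}U)$ as the integrand, gives exactly $-\int_0^T\langle\frac{\D}{dt}(\pnabla_{-}U),d\{\sigma\}_t\rangle_{\sigma_t}(V)$. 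This establishes (\ref{eq:mnfd_commutation}); the membership $\Div U\in\D^{2,1}$ then follows because the right-hand side exhibits $d(\Div U)$ as a genuine $L^2$ $H$-one-form, upgrading the a priori weak membership $\Div U\in\mathbb{W}^{2,1}$ coming from fact (\ref{fact2}). The adapted case (\ref{eq:mnfd_commutation_adapted}) is then immediate: for adapted $V$ the integrand may be pulled inside the stochastic integral, so the martingale term reassembles into the honest divergence $\Div(\pnabla_{V}U)=\overline{\div\nabla_{v}u}$.

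I expect the main obstacle to be the third step: justifying that differentiation commutes with the conditional expectation defining $\overline{\div u}$, and correctly converting the flat $\R^m$-valued divergence term into the intrinsic $d\{\sigma\}$-integral. This is where the relevant/redundant splitting (\ref{eq:splitting}) must be used to show that the redundant-noise contributions drop out under conditioning, and where the projection $\mathbb{Y}\mathbb{X}$ implicit in (\ref{eq:DMconnection}) has to be matched against the It\^o-integral representation. Keeping track of adaptedness, and of the domains on which the unbounded operators $\div$, $\pnabla$ and the conditional expectation interact, is the delicate technical part.
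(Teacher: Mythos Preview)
Your overall strategy coincides with the paper's: lift to $C_0$ via $\I$, use the flat commutation formula, and push back down using the intertwining relation together with the relevant/redundant noise splitting so that the $\beta$-contribution vanishes under conditioning. Your identification of $u=(\mathbb{Y}U)\circ\I$, the relation $(\Div U)\circ\I=\div u$ (which is even an equality, not merely after conditioning, since $u$ is $\F^{\I}$-adapted), and the mechanism by which $\frac{\D}{dt}(\pnabla_{-}U)$ emerges as the integrand against $d\{\sigma\}_t$ are all in line with what the paper does.

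The genuine gap is your argument for $\Div U\in\D^{2,1}$. You write that ``the right-hand side exhibits $d(\Div U)$ as a genuine $L^2$ $H$-one-form, upgrading the a~priori weak membership $\Div U\in\mathbb{W}^{2,1}$''. This is circular: by definition every element of $\mathbb{W}^{2,1}$ already has an $L^2$ weak gradient, so exhibiting an $L^2$ candidate for the derivative cannot by itself upgrade $\mathbb{W}^{2,1}$ to $\D^{2,1}$ unless you know these spaces coincide, which is not established. Fact~(\ref{fact2}) only gives $\mathbb{W}^{2,1}$ from $\div u\in\D^{2,1}(C_0)$, and fact~(\ref{fact3}) requires $\D^{2,2}$ upstairs. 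The paper closes this gap by an approximation: take adapted $U_j\in\D^{2,2}\HH$ with $U_j\to U$ in $\D^{2,1}\HH$, so that $(\Div U_j)\circ\I\in\D^{2,2}(C_0)$ and hence $\Div U_j\in\D^{2,1}(C_{x_0})$ by~(\ref{fact3}); then one uses that $\{f\circ\I:f\in\D^{2,1}(C_{x_0})\}$ is \emph{closed} in $\D^{2,1}(C_0)$ (Corollary~4.3 of \cite{elworthy2007ito}) to pass to the limit. You should insert this step.
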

\begin{remark}
The map $\pnabla_{-} U\in \LL(\HH; \HH)$ is adapted in the sense that its composition with the evaluation map is adapted: 
$ev_t\circ\pnabla_{-} U\in \LL(\HH; T_{\sigma_t}M)$ is $\F_t^{{x_0}}$-measurable for all $t\in [0,T]$. 
The integral in (\ref{eq:mnfd_commutation}) is an $\LL(\HH;\R)$-valued It\^o integral.
\end{remark}
\begin{proof}
Choose an SDE on $M$ as in Section \ref{sec:Ito map} and use the notation of that section. 
Since $U$ is adapted to $\{\F_t^{{x_0}}\}_{t\in[0,T]}$, the map $\mathbb{Y} U\circ\I:C_0\to H$ is adapted to $\{\F_t^\I\}_{t\in[0,T]}$. 
We apply Corollary 5.2 in \cite{elworthy2007ito} to calculate 
\begin{eqnarray*}
(\Div U)\circ \I
&=& - \E\left[\int_0^T\langle  \frac{\D}{dt} U_t\circ \I, X(x_t) dB_t\rangle_{x_t}|\F_T^{\I}\right]\\
&=& - \int_0^T\langle  \frac{\D}{dt} U_t\circ \I, X(x_t) dB_t\rangle_{x_t}\\
&=& - \int_0^T\langle   Y\frac{\D}{dt} U_t\circ \I, K^\perp(x_t) dB_t\rangle_{\R^m},
\end{eqnarray*}
where the second line follows from the adaptedness of $U$.  
\par
Since $U\in \D^{2,1}(C_{x_0}; \HH)$, 
by (\ref{fact1}) we have 
$U\circ \I \in\D^{2,1}(C_0;\HH)$, hence $K^\perp(x_t)Y\frac{\D}{dt} U_t\circ \I \in\D^{2,1}(C_0;\R^m)$, so we can apply the commutation formula for the Wiener space to obtain 
$(\Div U)\circ \I\in \D^{2,1}(C_0;\R)$. 
Applying (\ref{fact2}), we see $\Div U\in \mathbb{W}^{2,1}(C_{x_0};\R)$. 
To prove $\Div U\in\D^{2,1}(C_{x_0};\R)$, we take a sequence of adapted 
$U_j\in \D^{2,2}\HH$ such that $U_j\to U$ in $\D^{2,1}\HH$.  
The argument above shows now  
$U_j\circ \I \in\D^{2,2}(C_0;\HH)$, 
$(\Div U_j)\circ \I\in \D^{2,2}(C_0;\R)$, and 
$
(\Div U_j)\circ \I\to (\Div U)\circ \I
$
in $\D^{2,1}(C_0;\R)$. 
Applying (\ref{fact3}), we also see that $(\Div U_j)\circ \I\in \D^{2,2}(C_0;\R)$ implies $\Div U_j\in \D^{2,1}(C_{x_0};\R)$. 
From Corollary 4.3 of \cite{elworthy2007ito} we know that the set
\[
\left\{f\circ\I|f\in\D^{2,1}(C_{x_0};\R) \right\}
\] 
is closed in $\D^{2,1}(C_0;\R)$, 
so the convergence of $(\Div U_j)\circ \I$ to $(\Div U)\circ \I$ in $\D^{2,1}(C_0;\R)$ implies that $\Div U \in \D^{2,1}(C_{x_0};\R)$.
\par
We now make use of the splitting (\ref{eq:splitting}) to calculate, for any $h\in H$, 
\begin{eqnarray}
\label{eq:DdivUIh}
& &d[(\Div U)\circ \I](h)\notag\\
&=& - \int_0^T\!\!\!\!\langle d[Y\frac{\D}{dt} U_t\circ \I](h),  K^\perp(x_t) dB_t\rangle_{\R^m}
- \int_0^T\!\!\!\!\langle Y\frac{\D}{dt} U_t\circ \I, K^\perp(x_t)\dot{h}_t\rangle_{\R^m}dt\notag\\
& &- \int_0^T\!\!\!\!\langle   Y\frac{\D}{dt} U_t\circ \I, d(K^\perp(x_t))(h)dB_t\rangle_{\R^m}\notag\\
&=&-\int_0^T\!\!\!\!\langle Xd(Y\frac{\D}{dt} U_t) T\I(-)_t, X(x_t)dB_t\rangle_{x_t}(h)
-\int_0^T\!\!\!\langle  \frac{\D}{dt} U_t\circ \I, X(x_t)\dot{h}_t\rangle_{\R^m}dt \notag\\
& &-\int_0^T\!\!\!\langle   \frac{\D}{dt} U_t\circ \I, Xd[YX]{T\I(-)_t}dB_t\rangle_{x_t}(h)\notag\\
&=&-\int_0^T\!\!\!\!\langle   \tilde{\nabla}_{T\I(-)_t}\frac{\D}{dt} U_t, X(x_t)dB_t\rangle_{x_t}(h)
-\int_0^T\!\!\!\langle  \frac{\D}{dt} U_t\circ \I, X(x_t)\dot{h}_t\rangle_{\R^m}dt \notag\\
& &-\int_0^T\!\!\!\langle \frac{\D}{dt} U_t\circ \I, \tilde{\nabla}_{T\I(-)_t}X(x_t)dB_t\rangle_{x_t}(h)\notag\\
&=&-\int_0^T\!\!\!\!\langle  \tilde{\nabla}_{T\I(-)_t}\frac{\D}{dt} U_t, X(x_t)\tilde{\parals_t} d\tilde{B}_t \rangle_{x_t}(h)
-\int_0^T\!\!\!\langle \frac{\D}{dt} U_t\circ \I, X(x_t)\dot{h}_t\rangle_{\R^m}dt \notag\\
& &-\int_0^T\!\!\langle   \frac{\D}{dt} U_t\circ\I, \tilde{\nabla}_{T\I(-)_t}X(x_t)\tilde{\parals_t}  d\beta_t\rangle_{x_t}(h),
\end{eqnarray}
where in the last line we used the basic property (\ref{eq:LJW_connection}). 
Recall the intertwining formula from \cite{elworthy2007ito} 
\begin{equation}
\label{eq:mnfd_intertwine}
df[\overline{T\I(h)}]=\overline{d(f\circ\I)(h)},
\end{equation}
where $f\in \D^{2,1}(C_{x_0};\R)$ and $h\in L^2(C_0; H)$. 
If $V=\overline{T \I}(h)$ for a constant $h\in H$, 
we can apply (\ref{eq:mnfd_intertwine}) to $f=\Div U$ and arrive at
\begin{equation}
\label{eq:DdivUV}
d(\Div U)(V(\sigma))=\E\left[d(\Div U\circ \I)(h)|\I=\sigma\right].
\end{equation}
To calculate this, first observe that taking conditional expectation of the right-hand side of (\ref{eq:DdivUIh}) 
annihilates the last term, since $\beta_{.}$ 
is independent of $\F^{x_0}_.$.  
Applying (\ref{eq:barTI}) and (\ref{eq:connection_relation}), we obtain from (\ref{eq:DdivUV})
\begin{eqnarray*}
&\!\! &d(\Div U)(V(\sigma)) \\
&\!\!=& -\int_0^T \langle \tilde{\nabla}_{ev_t(-)}\frac{\D}{dt}  U, d\{\sigma\}_t\rangle_{\sigma_t}(\overline{T\I}_\sigma(h)) -\int_0^T\langle  \frac{\D}{dt} U, X(\sigma_t)\dot{h}_t\rangle_{\R^m}dt\\
&\!\!=& -\int_0^T \langle   \tilde{\nabla}_{ev_t(-)}\frac{\D}{dt}  U, d\{\sigma\}_t\rangle_{\sigma_t}(V(\sigma)) -\int_0^T\langle  \frac{\D}{dt} U(\sigma),\frac{\D}{dt} V(\sigma)\rangle_{\R^m}dt\\
&\!\!=&-\int_0^T \langle   \frac{\D}{dt} (\,\pnabla_{-} U),  d\{\sigma\}_t\rangle_{\sigma_t} (V(\sigma))- \langle   U(\sigma), V(\sigma)\rangle_{\HH}.
\end{eqnarray*}
This finishes the proof of (\ref{eq:mnfd_commutation}) for a vector field of the form $V=\overline{T \I}(h)$ with $h\in H$. Since each term in (\ref{eq:mnfd_commutation}) is linear and continuous in $V$, we can extend this result immediately to a general $H$-vector field $V$.
\par
For $V$ an adapted $L^2$ $H$-vector field, equation (\ref{eq:mnfd_commutation_adapted}) holds since we can take $V$ inside the stochastic integral. 
\end{proof}
It would be  useful  to obtain a version of these formulae without the the adaptedness condition  on $U$. 
This would be a major step in obtaining a Weitzenb\"ock   identity for the Hodge Laplacian on  one-forms; see the works of Cruzeiro and Fang \cite{cruzeiro1997anl2estimate, cruzeiro2001weitzenbock,  cruzeiro2001weak} for related discussions.
\subsection{The operator $\mathfrak{D}^1$}
Define $\pnabla[\sharp]: \D^{2,1}\HH\to L^2\Gamma(\otimes^2 \HH)$ by 
\[\!\!{}_{(2)}\!\langle\,\pnabla[\sharp] U, V\rangle_{\HH}=\pnabla_V U, \quad \forall U\in \D^{2,1}\HH, V\in\HH.
\]
From equation (\ref{eq:DMconnection}), we see 
\begin{equation}
\label{eq:nabla^sharp_XY}
\pnabla[\sharp] U
= (\mathbb{X}\otimes\id)\nabla (\mathbb{Y} U).
\end{equation}
\par
We now define an operator $\mathfrak{D}^1:\Dom(\mathfrak{D}^1)\subset L^2\Gamma\HH\to L^2\Gamma(\wedge^2 \HH)$ by 
\[
\mathfrak{D}^1 V= \frac{1}{2}(\tau\pnabla[\sharp] V- \pnabla[\sharp] V),
\]
with initial domain $\D^{2,1}\HH$; we show below that it is closable and from then on take its closure.  
Let $\pnabla[*]:L^2\Gamma(\otimes^2 \HH)\to L^2\Gamma\HH$ 
be the $L^2$-adjoint of $\pnabla[\sharp]$. 
\par
In general for an element $v$ of a Hilbert space $G$, we denote by $v^\sharp\in G$ the dual element; for  a section $\phi$ of a Hilbert bundle,  $\phi^\sharp$ is the corresponding section of the dual bundle. Thus $(v^\sharp)^\sharp=v$, and if $\phi $ is an $H$-one-form, then $\phi^\sharp$ is an $H$-vector field.
\par
In terms of the operator $\mathfrak{D}^1$ and the torsion $\pathT\in \LL_{skew}(\HH,\HH; TC_{x_0})$ of the damped Markovian connection, formula (\ref{eq:pextder}) can now be written, for a smooth geometric one-form $\phi$ and two $H$-vector fields $V_1$, $V_2$, as
\begin{equation}
\label{eq:manifold_d1}
2 \, d^1\phi(V_1\wedge V_2)
= \langle  2 \, \mathfrak{D}^1\phi^{\sharp},   V_1\wedge V_2\rangle_{\wedge^2 \HH}  + (\phi\circ\pathT)(V^1,V^2).
\end{equation}
\begin{lemma}
\label{lem:mnfd_closability}
The operator $\mathfrak{D}^1$ is closable.  Moreover, 
\[
\mathfrak{D}^{1*}=-\pnabla[*]|_{L^2\Gamma(\wedge^2\HH)\cap \Dom(\,\pnabla[*])}.
\]
\end{lemma}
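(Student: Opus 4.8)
The plan is to realise $\mathfrak{D}^1$ as a bounded projection composed with the already-closed operator $\pnabla[\sharp]$, to read off its adjoint by a one-line duality computation, and then to isolate closability as a density statement about skew-symmetric sections in $\Dom(\pnabla[*])$.

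First I would record three structural facts. The flip $\tau$ is an isometric involution of the Hilbert space $L^2\Gamma(\otimes^2\HH)$, hence self-adjoint, and $L^2\Gamma(\wedge^2\HH)$ (with the inner product induced from $\otimes^2\HH$) is exactly its $(-1)$-eigenspace; write $P=\tfrac12(\id-\tau)$ for the orthogonal projection onto $L^2\Gamma(\wedge^2\HH)$, so that $\mathfrak{D}^1=-P\,\pnabla[\sharp]$ on $\D^{2,1}\HH$. Since $\pnabla$ is closed, $\pnabla[\sharp]$ is closed as well, being $\pnabla$ followed by the isometry $\LL_2(\HH;\HH)\cong\otimes^2\HH$; consequently $\pnabla[*]=(\pnabla[\sharp])^*$ is densely defined in $L^2\Gamma(\otimes^2\HH)$, and (\ref{eq:nabla^sharp_XY}) realises it as a genuine divergence built from the Malliavin gradient through the bounded intertwiners $\mathbb{X},\mathbb{Y}$.

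Next comes the adjoint itself. For $U\in\D^{2,1}\HH$ and any $\omega\in L^2\Gamma(\wedge^2\HH)$, self-adjointness of $\tau$ together with $\tau\omega=-\omega$ gives
\[
\langle\mathfrak{D}^1U,\omega\rangle=\tfrac12\langle\pnabla[\sharp]U,(\tau-\id)\omega\rangle=-\langle\pnabla[\sharp]U,\omega\rangle.
\]
Read as a functional of $U$, the right-hand side is $L^2\Gamma\HH$-continuous precisely when $\omega\in\Dom(\pnabla[*])$, in which case it equals $\langle U,-\pnabla[*]\omega\rangle$. Both inclusions follow immediately, yielding
\[
\Dom(\mathfrak{D}^{1*})=L^2\Gamma(\wedge^2\HH)\cap\Dom(\pnabla[*]),\qquad \mathfrak{D}^{1*}=-\pnabla[*]\big|_{L^2\Gamma(\wedge^2\HH)\cap\Dom(\pnabla[*])},
\]
which is the asserted formula; it transfers verbatim to the closure once closability is known, since $A^*=(\bar A)^*$.

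Finally, closability is equivalent to $\Dom(\mathfrak{D}^{1*})$ being dense in $L^2\Gamma(\wedge^2\HH)$, and this is where I expect the real difficulty to lie: although $\Dom(\pnabla[*])$ is dense in the ambient space $L^2\Gamma(\otimes^2\HH)$, its intersection with the \emph{closed} subspace $L^2\Gamma(\wedge^2\HH)$ need not remain dense. I would close this gap by showing that $P$ (equivalently $\tau$) maps $\Dom(\pnabla[*])$ into itself, for then $P(\Dom(\pnabla[*]))\subseteq L^2\Gamma(\wedge^2\HH)\cap\Dom(\pnabla[*])$ is dense in $L^2\Gamma(\wedge^2\HH)$ because $P$ is continuous and onto. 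Concretely, I expect this to follow by exhibiting the smooth cylindrical skew-symmetric $H$-two-vector fields inside $\Dom(\pnabla[*])$: they are dense in $L^2\Gamma(\wedge^2\HH)$, and for such $\omega$ the pairing $U\mapsto\langle\pnabla[\sharp]U,\omega\rangle$ is made manifestly $L^2$-bounded by Gaussian integration by parts, using that $\pnabla$ is metric and the divergence/commutation machinery behind (\ref{eq:nabla^sharp_XY}) and Lemma \ref{lem:commutation}. The hard part will be verifying the requisite $L^2$-integrability of the resulting divergence terms, which is exactly where the concrete structure of the damped Markovian connection and the bounded intertwiners $\mathbb{X},\mathbb{Y}$ must be brought to bear.
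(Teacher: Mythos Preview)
Your argument is correct and matches the paper's proof: the adjoint is identified by the same one-line duality computation exploiting $\tau\omega=-\omega$, and closability is reduced to density of $L^2\Gamma(\wedge^2\HH)\cap\Dom(\pnabla[*])$ in $L^2\Gamma(\wedge^2\HH)$. The paper handles that density step by citing Proposition~9.6 of \cite{elworthy2008l2}, which is precisely your ``concrete'' route of exhibiting cylindrical skew-symmetric two-vector fields in $\Dom(\pnabla[*])$; your alternative suggestion that $\tau$ preserves $\Dom(\pnabla[*])$ is not obviously true (there is no evident intertwining of $\tau$ with $\pnabla[\sharp]$), so the cylindrical-approximation route is the one that actually works.
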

\begin{proof}
If $U\in L^2\Gamma(\wedge^2\HH)\cap\Dom(\,\pnabla[*])$, then $\tau U\!=\!-U\in L^2\Gamma(\wedge^2\HH)\cap\Dom(\pnabla[*])$. 
Thus $\pnabla[*]$ gives, when restricted to act on $\D^{2,1}(\wedge^2\HH)$, 
\[
\pnabla[*]U= - \frac{1}{2}(\tau\pnabla[\sharp] - \pnabla[\sharp])^*(U). 
\]
This implies that $U\in \Dom(\mathfrak{D}^{1*})$ and $\mathfrak{D}^{1*}U=-\pnabla[*]U$.
\par
The restriction of $\,\pnabla[*]$ to the intersection of its domain with $L^2\Gamma(\wedge^2 \HH)$ is a closed operator. 
It is densely defined in $L^2\Gamma(\wedge^2 \HH)$ by Proposition 9.6 of \cite{elworthy2008l2}.  
From this we see that the operator $\mathfrak{D}^1$ has the closed extension $\mathfrak{D}^{1**}$.
\par
It remains to show that $U\in \Dom(\mathfrak{D}^{1*})$ implies $U\in \Dom(\,\pnabla[*])$. For this, take $V\in \D^{2,1}\HH$ and $U\in \Dom(\mathfrak{D}^{1*})$. 
Since $U$ is skew-symmetric, 
\[
\E \langle\,\pnabla[\sharp] V,U\rangle 
=\frac{1}{2}\E \langle\,\pnabla[\sharp] V-\tau\pnabla[\sharp] V,U\rangle
=\E \langle V,- \frac{1}{2}(\tau\pnabla[\sharp]-\pnabla[\sharp])^*U\rangle,
\]
 and the result follows.
\end{proof}
\subsection{Differentiation through the optional projections} 
\label{se:diffproj}
Let  $P_{\V}:L^2\Gamma \HH\to L^2\Gamma \HH $ denote the projection onto the subspace $\V$ of adapted processes in $L^2\Gamma \HH$, i.e., 
\begin{equation}
\label{eq:mnfd_projection_adapted}
(P_{\V} U)_t = 
W_t\int_0^t W_s^{-1}\E(\frac{\D}{ds} U_s|\F^{x_0}_s)ds,\quad U\in L^2\Gamma \HH.
\end{equation} 
\par
We wish to show that $P_{\V}$ preserves the space of $\D^{2,1}$ vector fields.  This follows directly from Lemma \ref{lem:mnfd_conditional_D21} below, which extends the following result for the classical Wiener space by Nualart and Pardoux (Lemma 2.4 of \cite{nualart1988stochastic}): given $F\in \D^{2,1}(C_0; \R)$, we have $\E(F|\F_s)\in \D^{2,1}(C_0;\R)$ for $s\in [0,T]$, and 
\begin{equation}
\label{eq:NP}
\frac{d}{dt}[\nabla \E(F|\F_s)]_t 
= \E\left[\frac{d}{dt}(\nabla F)_t|\F_s\right]\mathbf{1}_{[0,s]}(t), \quad \mbox{a.e. 
 in } [0,T]\times C_0.
\end{equation}
\par
To simplify notation, we recall the definition  of the canonical resolution of the identity $\{\pi_s\}_{s\in [0,T]}$ on the Cameron-Martin space $H$ of the classical Wiener space; see \cite{ustunel1987construction} \cite{wu1990traitement}. 
This consists of projections $\pi_s:H\to H$, $s\in [0,T]$, given by
\begin{equation}
\label{eq:resolution}
(\pi_s h)_t=\int_0^{t\wedge s}\dot h_r dr, \quad \forall t\in [0,T], h\in H. 
\end{equation}
For brevity we write $s\lor t=\max(s,t)$, and $s\land t=\min(s,t)$.  We also recall  if $F\in \D^{2,1}(C_{x_0}; G)$, for $G$ a seperable Hilbert space,  then $\nabla F\in G\otimes L^2\Gamma\HH$. 
\begin{lemma}
\label{lem:mnfd_conditional_D21}
Given $G$ a seperable Hilbert space and $F\in \D^{2,1}(C_{x_0}; G)$, we have 
$\E(F |\F^{x_0}_s) \in \D^{2,1}(C_{x_0}; G)$ for $s\in [0,T]$, and a.e. in $[0,T]\times C_{x_0}$, 
\begin{equation}
\label{eq:mnfd_conditional_D21}
(\id \otimes \frac{\D}{dt})[\nabla \E(F|\F^{x_0}_{s})]_t 
=\mathbf{1}_{[0,s]}(t) \E\left[(\id \otimes\frac{\D}{dt})(\nabla F)_t|\F^{x_0}_{s}\right].
\end{equation}
\end{lemma}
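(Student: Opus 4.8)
The plan is to transfer everything to the classical Wiener space through the It\^o map $\I$, apply the Nualart--Pardoux identity (\ref{eq:NP}) there, and push the result back to $C_{x_0}$, in the spirit of the proof of Lemma \ref{lem:commutation}. Two identities do the work. The first is measure-theoretic: $F\circ\I$ is $\F_T^\I$-measurable, hence a functional of the relevant noise $\tilde B$ alone, and the redundant noise $\beta$ is independent of $\tilde B$; therefore conditioning such a functional on the full Brownian filtration $\F_s$ coincides with conditioning on the subfiltration $\F_s^\I$. Combined with the fact that $\I$ is measure-preserving and $\F_s^\I=\I^{-1}(\F_s^{x_0})$, this gives
\[
\E(F\mid\F_s^{x_0})\circ\I=\E(F\circ\I\mid\F_s^\I)=\E(F\circ\I\mid\F_s).
\]
The second is differential: applying the intertwining formula (\ref{eq:mnfd_intertwine}) to a constant $h\in H$, using $\frac{\D}{dt}\overline{T\I}_\sigma(h)_t=X(\sigma_t)\dot h_t$ from (\ref{eq:barTI}) together with $X(\sigma_t)Y_{\sigma_t}=\id_{T_{\sigma_t}M}$, I obtain for every $\Phi\in\D^{2,1}(C_{x_0};G)$ the master formula
\[
(\id\otimes\tfrac{\D}{dt})(\nabla\Phi)_t=X(\sigma_t)\,\E\!\left[(\id\otimes\tfrac{d}{dt})(\nabla(\Phi\circ\I))_t\,\middle|\,\I=\sigma\right],\quad\text{a.e.}
\]

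I would first prove (\ref{eq:mnfd_conditional_D21}) for a dense class of well-behaved $F$ --- smooth cylindrical functions, for which $F\circ\I\in\D^{2,2}(C_0;G)$ --- so that membership in $\D^{2,1}$ on both path spaces is unproblematic: since $\E(\cdot\mid\F_s)$ preserves $\D^{2,2}(C_0;G)$, the displayed measure-theoretic identity and (\ref{fact3}) give $\E(F\mid\F_s^{x_0})\in\D^{2,1}(C_{x_0};G)$. For such $F$ I apply the master formula to $\tilde F:=\E(F\mid\F_s^{x_0})$, substitute the (vector-valued) Nualart--Pardoux expression (\ref{eq:NP}) for $(\id\otimes\frac{d}{dt})\nabla(\tilde F\circ\I)=(\id\otimes\frac{d}{dt})\nabla\E(F\circ\I\mid\F_s)$, and apply the master formula once more to $F$ itself. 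Writing $Z_t:=(\id\otimes\frac{d}{dt})(\nabla(F\circ\I))_t$, comparison of the two sides reduces the claim to
\[
\E\!\left[\E[Z_t\mid\F_s]\,\middle|\,\F_T^\I\right]=\E[Z_t\mid\F_s^\I],
\]
which is again the independence of $\beta$ and $\tilde B$; the indicator $\mathbf 1_{[0,s]}(t)$ is inert under every spatial operation and conditional expectation, and for $t\le s$ the factor $X(\sigma_t)$ is $\F_s^{x_0}$-measurable and so commutes with $\E(\cdot\mid\F_s^{x_0})$.

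Finally I extend to arbitrary $F\in\D^{2,1}(C_{x_0};G)$ by closability of the gradient, working entirely on $C_{x_0}$. The right-hand side of (\ref{eq:mnfd_conditional_D21}) is an $L^2$-contraction of $\nabla F$ --- conditional expectation and multiplication by $\mathbf 1_{[0,s]}$ are norm-decreasing and $\frac{\D}{dt}$ is the isometry identifying $\HH$ with $L^2TC_{x_0}$ --- while $F\mapsto\E(F\mid\F_s^{x_0})$ is an $L^2$-contraction. Thus, choosing cylindrical $F_j\to F$ in $\D^{2,1}(C_{x_0};G)$, the functions $\E(F_j\mid\F_s^{x_0})$ converge in $L^2(C_{x_0};G)$ while their gradients converge in $L^2\Gamma(G\otimes\HH)$ to the element whose $\frac{\D}{dt}$-image is $\mathbf 1_{[0,s]}(t)\,\E[(\id\otimes\frac{\D}{dt})(\nabla F)_t\mid\F_s^{x_0}]$. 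Closability of $\nabla$ then yields $\E(F\mid\F_s^{x_0})\in\D^{2,1}(C_{x_0};G)$ together with (\ref{eq:mnfd_conditional_D21}).

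I expect the main obstacle to be the careful bookkeeping of the three filtrations $\F_s$, $\F_s^\I$ and $\F_s^{x_0}$ and the commutation of the various conditional expectations with the It\^o-map averaging $\E[\,\cdot\mid\I=\sigma]$; every such step rests on the independence of the redundant noise $\beta$ from the relevant noise rather than on any hard analytic estimate. By contrast, the regularity upgrade to $\D^{2,1}$ --- which in the analogous Lemma \ref{lem:commutation} demanded a delicate $\D^{2,2}$-approximation through (\ref{fact2})--(\ref{fact3}) --- is here handled cleanly by closability, once the formula is secured on a dense class.
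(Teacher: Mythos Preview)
Your proposal is correct and follows essentially the same route as the paper: transfer to $C_0$ via the It\^o map using the identity $\E(F\mid\F_s^{x_0})\circ\I=\E(F\circ\I\mid\F_s)$, apply the Nualart--Pardoux formula (\ref{eq:NP}) there, push back through the intertwining (\ref{eq:mnfd_intertwine}), invoke (\ref{fact3}) for $\D^{2,1}$-membership, and close up by density. The only cosmetic differences are that the paper takes $\D^{2,2}(C_{x_0};\R)$ as the dense class (you take smooth cylindrical functions, which also works and sidesteps the need to know that (\ref{fact1}) upgrades to $\D^{2,2}$), and the paper treats the scalar case first before tensoring with $G$, whereas you carry $G$ through; your ``master formula'' is exactly the pointwise-in-$t$ unpacking of the paper's computation with $\overline{T\I}(h)$.
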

\begin{proof}
We assume first $F\in \D^{2,2}(C_{x_0};\R)$, then 
$F\circ \I \in\D^{2,2}(C_0;\R)$ by (\ref{fact1}).  
Recall that for $F:C_{x_0}\to \R$, we have $F\circ \I\in\F_s$ if and only if $F\in\F_s^{{x_0}}$.  
Applying the Nualart-Pardoux result above, we see
\[
\E(F|\F^{x_0}_{s}) \circ \I = \E(F\circ \I |\F_s) \in \D^{2,2}(C_0; \R).
\]
Applying (\ref{fact3}) we see $\E(F|\F^{x_0}_{s})\in\D^{2,1}(C_{x_0};\R)$. 
Equation (\ref{eq:NP}) allows us to calculate, for $h\in H$, 
\begin{eqnarray*}
d[\E(F |\F^{x_0}_s)] \circ T\I(h)
&=& d [\E(F|\F^{x_0}_{s}) \circ \I] (h) \\
&=& d [\E(F\circ \I|\F_s)] (h)\\
&=& \E[d(F\circ \I )|\F_s](\pi_sh)\\
&=& \E[dF\circ T \I(\pi_sh)|\F_s], 
\end{eqnarray*}
so taking conditional expectation with respect to $\F^{x_0}_T$, we use (\ref{eq:2TI}) to obtain
\[
d[\E(F |\F^{x_0}_s)] \circ\overline{T \I}(h)
= \E[dF\circ \overline{T \I(\pi_sh)}|\F^{x_0}_{s}]
= \E[dF\circ \overline{T \I}(\pi_sh)|\F^{x_0}_{s}]. 
\]
This shows that, give any $h\in H$, 
\begin{eqnarray*}
& &\int_0^T\langle  \frac{\D}{dt}[\nabla \E(F|\F^{x_0}_{s})]_t, X(x_t) \dot h_t \rangle_{x_t}dt\\
&=&\E\left[\int_0^T\langle \frac{\D}{dt}\nabla F, X(x_t) \dot h_t \mathbf{1}_{[0,s]}(t)\rangle_{x_t}dt|\F^{x_0}_s\right]\\
&=&\int_0^T\langle  \E\left[\frac{\D}{dt}(\nabla F)_t|\F^{x_0}_{s}\right]\mathbf{1}_{[0,s]}(t), X(x_t) \dot h_t \rangle_{x_t}dt, 
\end{eqnarray*}
which proves the result for $F\in \D^{2,2}(C_{x_0};\R)$, since $X$ is onto. 
\par
For a general function $F\in \D^{2,1}(C_{x_0};\R)$, we take a sequence of functions $F_i\in \D^{2,2}(C_{x_0};\R)$ such that $F_i\to F$ in $\D^{2,1}(C_{x_0};\R)$. We now have 
$\E(F_i|\F^{x_0}_{s})\to\E(F|\F^{x_0}_{s})$ and  $\nabla F_i\to \nabla F$ in $L^2$.  
The above arguments also imply that $\E(F_i|\F^{x_0}_{s})\in \D^{2,1}(C_{x_0};\R)$, and 
\[
\frac{\D}{dt}[\nabla \E(F_i|\F^{x_0}_{s})]_t 
= \E[\frac{\D}{dt}(\nabla F_i)_t|\F^{x_0}_{s}]\mathbf{1}_{[0,s]}(t)
\to \E[\frac{\D}{dt}(\nabla F)_t|\F^{x_0}_{s}]\mathbf{1}_{[0,s]}(t)
\]
in $L^2$. 
Since $\nabla $ is a closed operator, we see 
$\E(F|\F^{x_0}_{s})\in \D^{2,1}(C_{x_0};\R)$, 
and (\ref{eq:mnfd_conditional_D21}) holds indeed for $F\in \D^{2,1}(C_{x_0};\R)$. 
\par
For $F\in \D^{2,1}(C_{x_0}; G)$, the result follows from the isomorphism between $\D^{2,1}(C_{x_0}; G)$ and $G\otimes\D^{2,1}(C_{x_0}; \R)$. 
\end{proof} 
\begin{proposition}
\label{prop:mnfd_NP}
Given $U\in \D^{2,1}\HH$,  we have $P_\V U\in \D^{2,1}\HH$, and 
\begin{equation}
\label{eq:mnfd_NP}
(\frac{\D}{ds}\otimes \frac{\D}{dt})(\pnabla[\sharp] P_\V U)_{s,t}
= \mathbf{1}_{[0,s]}(t) \E\left[(\frac{\D}{ds}\otimes \frac{\D}{dt})(\pnabla[\sharp]U)_{s,t}|\F^{x_0}_{s}\right].
\end{equation}
\end{proposition}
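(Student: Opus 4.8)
The plan is to recognise Proposition \ref{prop:mnfd_NP} as the Bismut-bundle analogue of the Nualart--Pardoux formula (\ref{eq:NP}) and to reduce it, through the representation (\ref{eq:nabla^sharp_XY}) of $\pnabla[\sharp]$, to the fixed-time conditioning statement already furnished by Lemma \ref{lem:mnfd_conditional_D21}. The entire difficulty lies in the fact that $P_\V$ involves \emph{progressive} conditioning — at each time $s$ one conditions on $\F^{x_0}_s$ — whereas Lemma \ref{lem:mnfd_conditional_D21} conditions on a single fixed $\F^{x_0}_s$. The bridge is that, after passing through $\pnabla[\sharp]$, the first (output) tensor slot of the answer carries precisely the time $s$ at which $P_\V$ conditions, so that the two occurrences of $s$ line up.

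Concretely, I would first convert everything into the $\R^m$-valued language of $\mathbb{Y}$. For $U\in\D^{2,1}\HH$ set $\eta_s:=Y_{\sigma_s}\frac{\D}{ds}U_s=\frac{d}{ds}(\mathbb{Y}U)_s\in\R^m$; by (\ref{eq:bold_Y}) this is the Cameron--Martin velocity of $g:=\mathbb{Y}U$. Using (\ref{eq:nabla^sharp_XY}), $\pnabla[\sharp]U=(\mathbb{X}\otimes\id)\nabla g$, and an expansion in an orthonormal basis $\{e_i\}$ of $H$ together with the reproducing-kernel identity $\sum_i\dot e_i(s)\otimes\dot e_i(r)=\delta(s-r)\,\id_{\R^m}$ collapses the double ``time'' bookkeeping to
\[
\Big(\tfrac{\D}{ds}\otimes\tfrac{\D}{dt}\Big)(\pnabla[\sharp]U)_{s,t}
=(X(\sigma_s)\otimes\id)\,(\id\otimes\tfrac{\D}{dt})(\nabla\eta_s)_t .
\]
This is the key computational identity: the first slot is just $X(\sigma_s)$ applied to the value-index of the gradient of the fibre $\eta_s$, while the $t$-slot records the differentiation. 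Next I would observe that, because $Y_{\sigma_s}$ (like $X(\sigma_s)$) is $\F^{x_0}_s$-measurable, the optional projection (\ref{eq:mnfd_projection_adapted}) acts on these fibres by plain conditioning: $\eta^{P}_s:=Y_{\sigma_s}\frac{\D}{ds}(P_\V U)_s=\E(\eta_s\mid\F^{x_0}_s)$.

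With these two observations the argument is almost immediate. For a.e.\ fixed $s$ the fibre $\eta_s$ lies in $\D^{2,1}(C_{x_0};\R^m)$ (by Fubini from $U\in\D^{2,1}\HH$ and the identity above), so Lemma \ref{lem:mnfd_conditional_D21} applies with $G=\R^m$ and conditioning time $s$, yielding both $\eta^{P}_s\in\D^{2,1}$ and, via (\ref{eq:mnfd_conditional_D21}),
\[
(\id\otimes\tfrac{\D}{dt})[\nabla\eta^{P}_s]_t=\mathbf 1_{[0,s]}(t)\,\E\big[(\id\otimes\tfrac{\D}{dt})(\nabla\eta_s)_t\mid\F^{x_0}_s\big].
\]
Applying $(X(\sigma_s)\otimes\id)$ to both sides, pulling the $\F^{x_0}_s$-measurable factor $X(\sigma_s)$ inside the conditional expectation, and then reading the key identity once for $P_\V U$ and once for $U$ delivers exactly (\ref{eq:mnfd_NP}). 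Since conditional expectation and the indicator are both $L^2$-contractions, the right-hand side is square-integrable over $(s,t,\sigma)$, which is what is needed to conclude $P_\V U\in\D^{2,1}\HH$.

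I expect the genuine work to lie in two places. The first is the rigorous justification of the collapse above: the $\delta$-function manipulation must be replaced by testing against a general $H$-vector field $V$, an honest $L^2([0,T];\R^m)$ basis expansion, and a check that the sum over $i$ may be interchanged with the $dr$-integration. The second, and more delicate, is turning the fibrewise (a.e.-$s$) application of Lemma \ref{lem:mnfd_conditional_D21} into a statement about $P_\V U$ as a single element of $\D^{2,1}\HH$: one must secure joint measurability in $s$ and, to identify the candidate tensor as the genuine $\pnabla[\sharp](P_\V U)$ rather than merely an $L^2$ limit, first run the elementary computation on a dense class of $U$ (for instance those with finite-rank $\eta_\cdot$) and then invoke the closedness of $\pnabla[\sharp]$ — equivalently of $\nabla$ through $\mathbb{X},\mathbb{Y}$ — to pass to the limit.
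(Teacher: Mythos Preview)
Your approach is essentially the same as the paper's: set $\eta_s=Y_{\sigma_s}\frac{\D}{ds}U_s$, use the $\F^{x_0}_s$-measurability of $Y_{\sigma_s}$ to identify $Y_{\sigma_s}\frac{\D}{ds}(P_\V U)_s$ with $\E(\eta_s\mid\F^{x_0}_s)$, apply Lemma~\ref{lem:mnfd_conditional_D21} with $G=\R^m$ at level $s$, and then post-compose with $X(\sigma_s)$ using (\ref{eq:nabla^sharp_XY}). The paper's proof is simply terser---it obtains your ``key computational identity'' directly from (\ref{eq:nabla^sharp_XY}) and (\ref{eq:mnfd_projection_adapted}) without the basis/$\delta$-function heuristic, and does not spell out the joint-measurability and closedness points you flag in your final paragraph---but the logical skeleton is identical.
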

\begin{proof}
Since $U\in \D^{2,1}\HH$, we have $Y_{\sigma_s}\frac{\D}{ds}U_s\in \R^m$ for $\sigma \in C_{x_0}$ and $s\in [0,T]$. Lemma \ref{lem:mnfd_conditional_D21} 
shows that, a.e. in $[0,T]\times C_{x_0}$, 
\[
(\id\otimes\frac{\D}{dt})\left[\nabla\E(Y_{\sigma_s}\frac{\D}{ds}U_s |\F^{x_0}_s) \right]_t
=  \mathbf{1}_{[0,s]}(t)\E\left[(\id\otimes\frac{\D}{dt})\nabla (Y_{\sigma_s}\frac{\D}{ds}U)_t |\F^{x_0}_s\right].
\]
Equations (\ref{eq:nabla^sharp_XY}) and (\ref{eq:mnfd_projection_adapted}) allow us to derive 
\begin{eqnarray*}
(\frac{\D}{ds}\otimes \frac{\D}{dt})(\pnabla[\sharp] P_\V U)_{s,t}
&=&
(X(\sigma_s)\otimes \frac{\D}{dt})\nabla \left[Y_{\sigma_s}\E(\frac{\D}{ds}U_s| \F^{x_0}_s)\right]\\
&\!\!=\!\!& \mathbf{1}_{[0,s]}(t) \E\left[(\frac{\D}{ds}\otimes \frac{\D}{dt})(\pnabla[\sharp]U)_{s,t}|\F^{x_0}_{s}\right]. \qedhere
\end{eqnarray*}
\end{proof}
\subsection{The Clark-Ocone formula and its derivative}
The Clark-Ocone formula on $C_{x_0}$ was first obtained by S. Fang \cite{fang1994inegalite}; see also \cite{elworthy1999geometry}. It states that if $F\in \D^{2,1}(C_{x_0}; \R)$ then
\begin{equation}
\label{eq:mnfd_CO_0}
F(\sigma)=\E F + \int_0^T\langle  \E\left[\frac{\D}{dt}(\nabla F)_t|\F_t^{x_0}\right],d\{\sigma\}_t\rangle_{\sigma_t}, \quad \mu_{x_0}\mbox{-a.e. } \sigma\in C_{x_0}.
\end{equation}
In terms of the projection $P_{\V}$ discussed earlier,  formula (\ref{eq:mnfd_CO_0}) can be written as
\[
F=\E F - \Div P_{\V}\nabla F, \quad\forall F\in \D^{2,1}(C_{x_0}; \R). 
\]
\par
For an $L^2$ $H$-one-form $\phi$, we define $CO(\phi):C_{x_0}\to \R$ by
\[
CO(\phi)=-\Div P_{\V}\phi^\sharp.
\]
The Clark-Ocone formula shows immediately that, if an $L^2$ $H$-one-form $\phi$ is exact, it is the derivative of $CO(\phi)$. This motivates the following 
\begin{proposition}
\label{prop:mnfd_CO_1a}
If $\phi\in L^2\Gamma\HH^*$ is in $\Dom(\mathfrak{D}^1)$, 
we have $CO(\phi)\in \D^{2,1}$, and for a.e. $t\in[0,T]$, 
\begin{equation}
\label{eq:DCO}
\frac{\D}{dt}[\nabla  CO(\phi)- \phi^\sharp]_t
= 2 \int_t^T\!\!{}_{(2)}\!\langle  \E\left[(\frac{\D}{dt}\otimes \frac{\D}{ds})(\mathfrak{D}^1\phi^{\sharp})_{t,s}|\F_s^{x_0}\right],d\{\sigma\}_s\rangle_{\sigma_s}.
\end{equation}
Consequently, if $\phi^\sharp\in\Dom(\mathfrak{D}^1)$, then $\phi=df$ for some $f\in \D^{2,1}$ if and only if 
\[
\E\left[(\frac{\D}{dt}\otimes \frac{\D}{ds})(\mathfrak{D}^1\phi^{\sharp})_{t,s} |\F_{s\lor t}^{x_0}\right]=0, \quad \mbox{ a.e. } (s,t)\in [0,T]\times[0,T].
\]
\end{proposition}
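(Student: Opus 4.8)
The plan is to first establish the differential formula (\ref{eq:DCO}) for $U:=\phi^\sharp\in\D^{2,1}\HH$ and then remove the regularity assumption by approximation, after which the stated ``if and only if'' falls out almost formally. Write $CO(\phi)=-\Div P_\V U$. Since $P_\V U$ is adapted and, by Proposition \ref{prop:mnfd_NP}, lies in $\D^{2,1}\HH$, Lemma \ref{lem:commutation} gives $CO(\phi)\in\D^{2,1}$ together with an expression for its derivative: for every $H$-vector field $V$,
\[
d\,CO(\phi)(V)=\int_0^T\langle\tfrac{\D}{dt}(\pnabla_V P_\V U),d\{\sigma\}_t\rangle_{\sigma_t}+\langle P_\V U,V\rangle_\HH .
\]
Reading the second summand off directly and the first as an $\HH^*$-valued It\^o integral, I would conclude $\frac{\D}{dt}(\nabla CO(\phi))_t=G_t+\frac{\D}{dt}(P_\V U)_t$, where $G$ is the $\HH$-field representing the It\^o term, and hence $\frac{\D}{dt}(\nabla CO(\phi)-U)_t=G_t-\frac{\D}{dt}(U-P_\V U)_t$. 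Everything then reduces to computing the two pieces $G$ and $U-P_\V U$ and recognising their difference as $2\mathfrak{D}^1U$.

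For the term $G$ I would insert the structure of $\pnabla[\sharp]P_\V U$ supplied by Proposition \ref{prop:mnfd_NP}: its $(s,t)$-component is $\mathbf 1_{[0,s]}(t)\E[(\frac{\D}{ds}\otimes\frac{\D}{dt})(\pnabla[\sharp]U)_{s,t}\mid\F^{x_0}_s]$, so the kernel of $t\mapsto\frac{\D}{dt}(\pnabla_{-}P_\V U)_t\in\LL(\HH;T_{\sigma_t}M)$ is adapted and supported on the ``lower triangle''. Treating the integral in (\ref{eq:mnfd_commutation}) as an $\HH^*$-valued It\^o integral with this adapted kernel --- so that $V$ is never placed inside a stochastic integral --- a stochastic Fubini exchange of the It\^o time with the direction variable yields $G_t=\int_t^T{}_{(2)}\langle\E[(\frac{\D}{dt}\otimes\frac{\D}{ds})(\tau\pnabla[\sharp]U)_{t,s}\mid\F^{x_0}_s],d\{\sigma\}_s\rangle_{\sigma_s}$; the transposition $\tau$ appears because the former output time becomes the new direction time, and I would use the identity $(\tau u)_{t,s}=\tau(u_{s,t})$ together with the commutation of $\tau$ with conditional expectation. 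For the remainder $U-P_\V U$, whose $\frac{\D}{ds}$-component is the innovation $\frac{\D}{ds}U_s-\E[\frac{\D}{ds}U_s\mid\F^{x_0}_s]$, I would apply the Clark-Ocone formula (\ref{eq:mnfd_CO_0}) to the frozen random variable $\frac{\D}{dt}U_t$, representing this innovation as an It\^o integral over $[t,T]$ and identifying its integrand with $\E[(\frac{\D}{dt}\otimes\frac{\D}{ds})(\pnabla[\sharp]U)_{t,s}\mid\F^{x_0}_s]$ via (\ref{eq:nabla^sharp_XY}). Subtracting and using $\mathfrak{D}^1U=\tfrac12(\tau\pnabla[\sharp]U-\pnabla[\sharp]U)$ produces exactly $2\mathfrak{D}^1U$ under the conditional expectation, which is (\ref{eq:DCO}). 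The general case $U=\phi^\sharp\in\Dom(\mathfrak{D}^1)$ I would obtain by taking $U_n\in\D^{2,1}\HH$ with $U_n\to\phi^\sharp$ in the graph norm of $\mathfrak{D}^1$; the right-hand side of (\ref{eq:DCO}) converges in $L^2$ by the It\^o isometry, $-\Div P_\V U_n\to CO(\phi)$ in $L^2$ since $\Div P_\V$ is bounded, and closedness of $\nabla$ delivers both $CO(\phi)\in\D^{2,1}$ and the formula in the limit.

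Granting (\ref{eq:DCO}), the equivalence is immediate. If $\phi=df$ with $f\in\D^{2,1}$, the $P_\V$-form of Clark-Ocone gives $CO(\phi)=f-\E f$, so $\nabla CO(\phi)=\phi^\sharp$ and the left-hand side of (\ref{eq:DCO}) vanishes; by the It\^o isometry the integrand vanishes, i.e. $\E[(\frac{\D}{dt}\otimes\frac{\D}{ds})(\mathfrak{D}^1\phi^\sharp)_{t,s}\mid\F^{x_0}_s]=0$ for a.e. $s\ge t$, and skew-symmetry of $\mathfrak{D}^1\phi^\sharp$ (which lets me transpose $(t,s)$ and commutes with the conditioning) promotes $\F^{x_0}_s$ to $\F^{x_0}_{s\lor t}$ over all $(s,t)$. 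Conversely, if $\E[(\frac{\D}{dt}\otimes\frac{\D}{ds})(\mathfrak{D}^1\phi^\sharp)_{t,s}\mid\F^{x_0}_{s\lor t}]=0$, then restricting to $s\ge t$ kills the integrand in (\ref{eq:DCO}), so $\frac{\D}{dt}(\nabla CO(\phi)-\phi^\sharp)_t=0$; since $\frac{\D}{d\cdot}$ is injective this forces $\nabla CO(\phi)=\phi^\sharp$, whence $\phi=d\,CO(\phi)$ with $CO(\phi)\in\D^{2,1}$.

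The main obstacle is the middle step: legitimising the stochastic-Fubini manipulation against a general, possibly non-adapted, test field $V$, and --- more delicately --- showing that the non-adapted correction $U-P_\V U$ produces, through Clark-Ocone, exactly the ``upper-triangular'' copy of $\pnabla[\sharp]U$ with the \emph{same} conditioning $\F^{x_0}_s$ and the \emph{same} integration range $[t,T]$ as the lower-triangular piece $G$. Only when these two match index-for-index do they combine into the skew tensor $2\mathfrak{D}^1U$; verifying that the connection terms hidden in $\frac{\D}{dt}U_t$ reproduce $\pnabla[\sharp]U$ precisely, rather than $\pnabla[\sharp]U$ together with spurious curvature corrections, is the computation I would expect to absorb most of the work.
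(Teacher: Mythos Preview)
Your proposal is correct and follows essentially the same route as the paper: start from Lemma~\ref{lem:commutation} applied to the adapted field $P_\V\phi^\sharp$, feed in Proposition~\ref{prop:mnfd_NP} for the covariant derivative of $P_\V\phi^\sharp$, represent the non-adapted remainder $\phi^\sharp-P_\V\phi^\sharp$ via Clark--Ocone, and recognise the resulting difference as $2\mathfrak{D}^1\phi^\sharp$; then close up by approximation. The only cosmetic difference is that the paper carries out the Clark--Ocone step on the $\R^m$-valued variable $Y_{\sigma_t}\frac{\D}{dt}\phi^\sharp_t$ and then pushes forward by $X(\sigma_t)$, which is exactly how your worry about ``spurious curvature corrections'' is dispelled: by~(\ref{eq:nabla^sharp_XY}) the connection is encoded through $\mathbb X,\mathbb Y$, so the identification of the integrand with $(\frac{\D}{dt}\otimes\frac{\D}{ds})(\pnabla[\sharp]\phi^\sharp)_{t,s}$ is immediate and no extra terms appear.
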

\begin{proof}
Suppose first $\phi\in \D^{2,1}\HH^*$, so we can apply the Clark-Ocone formula (\ref{eq:mnfd_CO_0}) 
to write, for almost all $t\in [0,T]$,
\begin{equation*}
Y_{\sigma_t}\frac{\D}{dt}\phi^{\sharp}_t(\sigma) 
= \E Y_{\sigma_t}\frac{\D}{dt}\phi^{\sharp}_t + \int_0^T\!\!{}_{(2)}\!\langle  \E\left[\frac{\D}{ds}\nabla (Y_{\sigma_t}\frac{\D}{dt}\phi^{\sharp}_t)_s |\F_s^{{x_0}}\right]\!\!,d\{\sigma\}_s\rangle_{\sigma_s}.
\end{equation*}
Applying $X(\sigma_t)$ to both sides, we have
\[
\frac{\D}{dt}\phi^{\sharp}_t(\sigma) 
=X(\sigma_t)\E Y_{\sigma_t}\frac{\D}{dt}\phi^{\sharp}_t + X(\sigma_t)\int_0^T\!\!\!{}_{(2)}\!\langle  \E\left[\frac{\D}{ds}\nabla (Y_{\sigma_t}\frac{\D}{dt}\phi^{\sharp}_t)_s |\F_s^{{x_0}}\right]\!\!,d\{\sigma\}_s\rangle_{\sigma_s}.
\]
Taking conditional expectation with respect to $\F_t^{x_0}$, we obtain 
\begin{equation*}
\E(\frac{\D}{dt}\phi^{\sharp}_t
|\F_t^{x_0})
=X(\sigma_t)\E Y_{\sigma_t}\frac{\D}{dt}\phi^{\sharp}_t + X(\sigma_t)\int_0^t\!\!\!{}_{(2)}\!\langle  \E\left[\frac{\D}{ds}\nabla (Y_{\sigma_t}\frac{\D}{dt}\phi^{\sharp}_t)_s |\F_s^{{x_0}}\right]\!\!,d\{\sigma\}_s\rangle_{\sigma_s},
\end{equation*}
hence 
\begin{eqnarray}
\label{eq:manifold_phisharp_con_diff}
\frac{\D}{dt}\phi^{\sharp}_t(\sigma) - \E(\frac{\D}{dt}\phi^{\sharp}_t
|\F_t^{x_0})
&\!\!\!=\!\!\!& X(\sigma_t)\int_t^T\!\!\!{}_{(2)}\!\langle  \E\left[\frac{\D}{ds}\nabla (Y_{\sigma_t}\frac{\D}{dt}\phi^{\sharp}_t)_s |\F_s^{{x_0}}\right]\!\!,d\{\sigma\}_s\rangle_{\sigma_s}\notag\\
&\!\!\!=\!\!\!&\int_t^T\!\!\!{}_{(2)}\!\langle  \E\left[(\frac{\D}{dt}\otimes \frac{\D}{ds})(\pnabla[\sharp]\phi^{\sharp})_{t,s} |\F_s^{{x_0}}\right]\!\!,d\{\sigma\}_s\rangle_{\sigma_s}.
\end{eqnarray}
For $\phi\in \D^{2,1}\HH^*$, Proposition \ref{prop:mnfd_NP} shows $P_{\V}\phi^\sharp\in \D^{2,1}\HH$. 
By Lemma \ref{lem:commutation}, we have $CO(\phi)\in \D^{2,1}$, and for $V=\overline{T \I}(h)$ with $h \in H$, 
\begin{eqnarray*}
d\left[CO(\phi)\right](V)
&=&-d(\Div P_{\V}\phi^\sharp)(V)\\
&=&\int_0^T\langle  \frac{\D}{ds}(\pnabla_{-} P_{\V}\phi^\sharp), d\{\sigma\}_s\rangle_{\sigma_s}(V)
+\langle  P_{\V}\phi^\sharp, V\rangle_{\HH}.
\end{eqnarray*}
We can now use (\ref{eq:mnfd_NP}) and (\ref{eq:manifold_phisharp_con_diff}) to calculate
\begin{eqnarray*}
\label{eq:mnfd_phisharp_con_diff}
& &d\left[CO(\phi)\right](V) - \langle  \phi^\sharp, V\rangle_{\HH} \\
&=&\int_0^T\langle  \frac{\D}{ds}(\pnabla_{-} P_{\V}\phi^\sharp),d\{\sigma\}_s\rangle_{\sigma_s}(V)
-\langle  \phi^\sharp-P_{\V}\phi^\sharp, V\rangle_{\HH}\\
&=&\int_0^T\left\langle  \int_t^T\!\!{}_{(1)}\!\langle  \E\left[(\frac{\D}{ds}\otimes \frac{\D}{dt}) (\,\pnabla[\sharp] \phi^{\sharp})_{s,t}|\F_s^{x_0}\right],d\{\sigma\}_s\rangle_{\sigma_s},\frac{\D}{dt}V_t\right\rangle_{\sigma_t}dt\\
& & -\int_0^T\left\langle \int_t^T\!\!\!{}_{(2)}\!\langle  \E\left[(\frac{\D}{dt}\otimes \frac{\D}{ds})(\pnabla[\sharp]\phi^{\sharp})_{t,s} |\F_s^{{x_0}}\right],d\{\sigma\}_s\rangle_{\sigma_s}, \frac{\D}{dt}V_t\right\rangle_{\sigma_t}dt\\
&=&\!\!\!\!\int_0^T\left\langle\int_t^T\!\!{}_{(2)}\!\langle  \E\left[(\frac{\D}{dt}\otimes \frac{\D}{ds})(\tau\pnabla[\sharp] \phi^{\sharp}- \pnabla[\sharp]\phi^{\sharp})_{t,s} |\F_s^{x_0}\right],d\{\sigma\}_s\rangle_{\sigma_s},\frac{\D}{dt}V_t\right\rangle_{\sigma_t}dt\\
&=&\!\!\!\!\int_0^T\left\langle  2 \int_t^T\!\!{}_{(2)}\!\langle  \E\left[(\frac{\D}{dt}\otimes \frac{\D}{ds})(\mathfrak{D}^1\phi^{\sharp})_{t,s} |\F_s^{x_0}\right], d\{\sigma\}_s\rangle_{\sigma_s},\frac{\D}{dt}V_t\right\rangle_{\sigma_t}dt, 
\end{eqnarray*}
proving the results for $\phi\in \D^{2,1}\HH^*$. 
\par
Since $\nabla$ is a closed operator with domain $\D^{2,1}$, the result for general $\phi$ in $\Dom (\mathfrak{D}^1)$ follows from the continuity of the map 
$CO:L^2\Gamma \HH^*\to L^2(C_{x_0};\R)$ and the denseness of $\D^{2,1}\HH$ in $\Dom (\mathfrak{D}^1)$.
\end{proof}
In view of formula (\ref{eq:extder}) and the existence of torsion in general, it is not obvious how the preceding proposition would relate to a condition involving the vanishing of the exterior derivative.  This will, however, become clear once we have examined first the conditioning which appears in (\ref{eq:DCO}) and an extension of the concept of adaptedness to two-tensor fields, and then the definition of the exterior derivative and the space of two-vectors $\HH^{(2)}$.
\section{Adapted two-tensors}
The space of ``adapted" two-tensor fields which arises naturally in this context is the space $\V^{(2)}$ given by
\[
\V^{(2)}=\left\{U\in L^2\Gamma(\otimes^2\HH):
(\frac{\D}{ds}\otimes \frac{\D}{dt})U_{s,t}\in \F_{s\lor t}^{x_0}, \mbox{ a.e. } (s,t)\in [0,T]\times[0,T]\right\},
\] 
with the corresponding subspace $\V_{skew}^{(2)}$  of ``adapted"  $H$-two-vector fields
\[
\V_{skew}^{(2)}\!\!=\!\left\{U\in L^2\Gamma(\wedge^2\HH):
(\frac{\D}{ds}\otimes \frac{\D}{dt})U_{s,t}\in \F_{s\lor t}^{x_0}, \mbox{ a.e. } (s,t)\in [0,T]\times[0,T]\right\}.
\] 
Both $\V^{(2)}$ and $\V_{skew}^{(2)}$ are closed subspaces of $L^2\Gamma(\otimes^2\HH)$, and we define the orthogonal projection $P_{\V^{(2)}}$ onto $\V^{(2)}$ by 
\[
(\frac{\D}{ds}\otimes \frac{\D}{dt})P_{\V^{(2)}}(U)_{s,t}
= \E\left[(\frac{\D}{ds}\otimes \frac{\D}{dt})U_{s,t}|\F^{x_0}_{s\lor t}\right],  \mbox{ a.e. } (s,t)\in [0,T]\times[0,T].
\]
\par
Define 
the set $\mathcal{S}_b^{(2)} $ of \emph{adapted bounded primitive two-tensor fields} by 
\[
\mathcal{S}_b^{(2)}=\left\{V_1\otimes V_2: V_1, V_2 \in  L^\infty\Gamma\HH \mbox{, both   adapted to }\{\F^{x_0}_t\}_{t\in[0,T]}\right\},
\]
as well as the corresponding set 
$\mathcal{S}_{b,skew}^{(2)}$ of \emph{adapted primitive two-vector fields},  with $V_1\wedge V_2$ replacing $V_1\otimes V_2$ in the definition.
\begin{lemma}
\label{lem:manifold_totality}
$\mathcal{S}_b^{(2)}$ is total in $\V^{(2)}$.  Similarly, $\mathcal{S}_{b,skew}^{(2)}$  is total in $\V_{skew}^{(2)}$.
\end{lemma}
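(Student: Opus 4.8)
The plan is to pass to a trivialized description of $\otimes^2\HH$ via damped parallel translation, reduce the statement to a two–parameter density problem for random kernels, and then exploit the continuity of the Brownian filtration so that the approximation respects the $\F^{x_0}_{s\lor t}$ constraint. Using the almost surely bijective map $\frac{\D}{d\cdot}$ and the damped translation $W$, I identify $U\in L^2\Gamma(\otimes^2\HH)$ with its kernel $\hat k_{s,t}:=(W_s^{-1}\otimes W_t^{-1})(\frac{\D}{ds}\otimes\frac{\D}{dt})U_{s,t}$, an $L^2$ field of $T_{x_0}M\otimes T_{x_0}M$-valued random variables over $[0,T]^2\times C_{x_0}$. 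Since $W_s$ and $W_s^{-1}$ are adapted and, on the compact $M$ over the finite interval $[0,T]$, uniformly bounded with the weight $W_s^*W_s$ bounded above and below, the condition $U\in\V^{(2)}$ is equivalent to $\hat k_{s,t}\in\F^{x_0}_{s\lor t}$ a.e., and the $\otimes^2\HH$-norm is equivalent to the flat norm $\iint\E|\hat k_{s,t}|^2\,ds\,dt$. An element $V_1\otimes V_2$ of $\mathcal{S}_b^{(2)}$ corresponds to a product kernel $\hat a(s)\otimes\hat b(t)$ with $\hat a,\hat b$ bounded and adapted, so it suffices to show that such product kernels are total among the $\F^{x_0}_{s\lor t}$-measurable kernels.

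Next I discretise time by a partition $0=t_0<\cdots<t_n=T$ and split $[0,T]^2$ into the blocks $R_{ij}=(t_{i-1},t_i)\times(t_{j-1},t_j)$. The union of the diagonal blocks $R_{ii}$ has Lebesgue measure $\sum_i(t_i-t_{i-1})^2\to0$ as the mesh shrinks, so the restriction of $\hat k$ there tends to $0$ in $L^2$ by dominated convergence and may be discarded. On an off-diagonal block with $i<j$ one has $s<t_{j-1}<t$, whence $s\lor t=t>t_{j-1}$ and $\F^{x_0}_{t_{j-1}}\subseteq\F^{x_0}_{s\lor t}$; I replace $\hat k$ on $R_{ij}$ by $\E[\hat k_{s,t}\mid\F^{x_0}_{t_{j-1}}]$. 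Because $\hat k_{s,t}\in\F^{x_0}_t$ and the Brownian filtration is continuous, so that $\F^{x_0}_{t^-}=\F^{x_0}_t$, martingale convergence gives $\E[\hat k_{s,t}\mid\F^{x_0}_{t_{j-1}}]\to\hat k_{s,t}$ in $L^2$ as the mesh shrinks, and the conditioned field still lies in (the trivialized) $\V^{(2)}$. Blocks with $i>j$ are handled symmetrically using $\F^{x_0}_{t_{i-1}}$.

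On a fixed block $R_{ij}$ with $i<j$, the conditioned kernel is an $\F^{x_0}_{t_{j-1}}$-measurable, $T_{x_0}M\otimes T_{x_0}M$-valued element of $L^2$. Expanding in an orthonormal basis $\{e_p\otimes e_q\}$ reduces matters to scalar coefficients $c_{pq}$ lying in $L^2((t_{i-1},t_i))\otimes L^2\big((t_{j-1},t_j)\times(C_{x_0},\F^{x_0}_{t_{j-1}})\big)$; totality of simple tensors in a Hilbert tensor product lets me write $c_{pq}\approx\sum_m f_m(s)g_m(t,\cdot)$ with $f_m$ deterministic and $g_m$ being $\F^{x_0}_{t_{j-1}}$-measurable, bounded after truncation. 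Then $\sum_m[f_m(s)e_p]\otimes[g_m(t,\cdot)e_q]$ is a finite sum of product kernels with bounded adapted factors: the first is deterministic, hence $\F^{x_0}_s$-measurable, and the second is $\F^{x_0}_{t_{j-1}}\subseteq\F^{x_0}_t$-measurable. Translating back through $\W$ and $W$ realises each summand as an element of $\mathcal{S}_b^{(2)}$, which establishes totality of $\mathcal{S}_b^{(2)}$ in $\V^{(2)}$.

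For $\V^{(2)}_{skew}$ I apply the orthogonal antisymmetrisation $\Lambda=\frac{1}{2}(\id-\tau)$. Since $(\frac{\D}{ds}\otimes\frac{\D}{dt})(\tau U)_{s,t}=\tau\big[(\frac{\D}{dt}\otimes\frac{\D}{ds})U_{t,s}\big]$ and $\F^{x_0}_{t\lor s}=\F^{x_0}_{s\lor t}$, the flip $\tau$ preserves $\V^{(2)}$, so $\Lambda$ is a bounded projection of $\V^{(2)}$ onto $\V^{(2)}_{skew}$ carrying $V_1\otimes V_2$ to $V_1\wedge V_2$; applying the continuous map $\Lambda$ to the dense span of $\mathcal{S}_b^{(2)}$ yields totality of $\mathcal{S}_{b,skew}^{(2)}$ in $\V^{(2)}_{skew}$. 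I expect the main obstacle to be the interaction of the $\F^{x_0}_{s\lor t}$ constraint with the diagonal: a naive piecewise-constant approximation with an $\F^{x_0}_{t_{\max(i,j)}}$ coefficient fails to be adapted on the diagonal blocks, and it is precisely the vanishing of the diagonal's measure together with the left-continuity of the Brownian filtration, which licenses the downward conditioning to $\F^{x_0}_{t_{j-1}}$, that forces the construction to respect adaptedness.
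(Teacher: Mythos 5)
Your argument is correct, but it takes a genuinely different route from the paper's. The paper expands the trivialized kernel in a parallel-translated orthonormal basis of $L^2([0,T];T_{x_0}M)$ with bounded adapted coefficients $\lambda^{j,k}_{s\lor t}$, reduces to elementary processes $f(\sigma)\mathbf{1}_{(a,b]}(s\lor t)$ with $f\in\F^{x_0}_a$, and then uses the exact algebraic identity $\mathbf{1}_{(a,b]}(s\lor t)=\mathbf{1}_{[0,b]}(s)\mathbf{1}_{(a,b]}(t)+\mathbf{1}_{[0,a]}(t)\mathbf{1}_{(a,b]}(s)$ to write each elementary term \emph{exactly} as a sum of two elements of $\mathcal{S}_b^{(2)}$; no partition limit, no discarding of the diagonal, and no property of the filtration beyond adaptedness is invoked. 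You instead discretize $[0,T]^2$ into blocks, discard the diagonal blocks (of vanishing total measure), condition the kernel on each off-diagonal block down to the $\sigma$-field at the block's left time endpoint via L\'evy's martingale convergence and the left-continuity $\F^{x_0}_{t^-}=\F^{x_0}_t$ (which is indeed automatic for the raw filtration of a continuous process), and then apply tensor-product totality blockwise; both proofs finish the skew case identically, by continuous antisymmetrization. Your route buys a fully detailed passage from a general jointly measurable $\F^{x_0}_{s\lor t}$-kernel to product form --- a step the paper compresses into the assertion that the coefficients may be taken as one-parameter adapted processes evaluated at $s\lor t$ --- at the cost of extra probabilistic input (filtration continuity plus a measure-zero-diagonal argument) where the paper's combinatorial splitting of $\mathbf{1}_{(a,b]}(s\lor t)$ handles the diagonal exactly.
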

\begin{remark}
Equivalently, we could state the $L^2$ version of the lemma, replacing 
$\V^{(2)}$ and $\mathcal{S}_b^{(2)} $, respectively, with
\[
{\V^{(2)}}'=\left\{U\in L^2\Gamma(\otimes^2 L^2 T C_{x_0}):
U_{s,t}\in \F_{s\lor t}^{x_0}, \mbox{ a.e. } (s,t)\in [0,T]\times[0,T]\right\},
\] 
and 
\[
{\mathcal{S}_b^{(2)}}'=\left\{V_1\otimes V_2: V_1, V_2 \in L^\infty\Gamma (L^2 T C_{x_0}), \mbox{ both adapted to }\{\F^{x_0}_t\}_{t\in[0,T]}\right\}.
\]
\end{remark}
\begin{proof}
Take an orthonormal basis $\{h^{x_0,j}\}_{j\in\mathbb{N}}$ of $L^2([0,T]; T_{x_0}M)$, and set 
\[
h^j_t(\sigma)=\parals_t^{\sigma}h^{x_0,j}_t,\quad j\in\mathbb{N}, t\in [0,T], \sigma\in C_{x_0}. 
\]
Given any $U\in\V^{(2)}$, we can approximate 
$(\frac{\D}{ds}\otimes \frac{\D}{dt})U_{s,t}\in \F_{s\lor t}^{x_0}$ by finite sums 
\[
\sum_{j,k=1}^n\lambda^{j,k}_{s\lor t} \, h^j_s\otimes h^k_t,
\] 
where $\lambda^{j,k}:[0,T]\times C_{x_0} \to \R$ are bounded and adapted, i.e., $\lambda^{j,k}_{r}\in \F_{r}^{x_0}$ for all $r\in[0,T]$.
Therefore, we only need to show that each term in the finite sums above can be approximated in $L^2$ by sums of terms of the form $(V_1)_s\otimes (V_2)_t$, 
with $V_1$, $V_2\in L^\infty\Gamma(L^2TC_{x_0})$, both adapted.  
\par
Since
$\lambda^{j,k}_{r} $ 
is the $L^2$ limit of sums of bounded elementary processes of the form 
\[
(r, \sigma)\mapsto f(\sigma)\mathbf{1}_{(a, b]}(r), \quad  \sigma\in C_{x_0}, a, b\in[0,T], f\in \F_{a}^{x_0},
\]
we write $\lambda^{j,k}_{s\lor t}$ as the limit of finite sums of functions of form $f(\sigma)\mathbf{1}_{(a,b]}(s\lor t)$. 
Observe that
\begin{eqnarray*}
\mathbf{1}_{(a,b]}(s\lor t) 
&=& \mathbf{1}_{[0,a]}(s)\mathbf{1}_{(a,b]}(t) + \mathbf{1}_{[0,a]}(t)\mathbf{1}_{(a,b]}(s) + \mathbf{1}_{(a,b]}(t)\mathbf{1}_{(a,b]}(s)\\
&=& \mathbf{1}_{[0,b]}(s)\mathbf{1}_{(a,b]}(t) + \mathbf{1}_{[0,a]}(t)\mathbf{1}_{(a,b]}(s),
\end{eqnarray*}
which means
\begin{eqnarray*}
& &f(\sigma)\mathbf{1}_{(a,b]}(s\lor t)h^j_s\otimes h^k_t \\
&=& \mathbf{1}_{[0,b]}(s)h^j_s\otimes f(\sigma)\mathbf{1}_{(a,b]}(t)h^k_t 
+ f(\sigma) \mathbf{1}_{(a,b]}(s)h^j_s\otimes \mathbf{1}_{[0,a]}(t) h^k_t. 
\end{eqnarray*}
This shows that $\lambda^{j,k}_{s\lor t} h^j_s\otimes h^k_t$ is indeed a limit of sums of $(V_1)_s\otimes (V_2)_t$ with $V_1$ and $V_2$ adapted, so the conclusion holds. The result for $\V_{skew}^{(2)}$ follows by skew-symmetrisation.
\end{proof}
\subsection{The subspaces $\V^{(2)}_1$ and $\V^{(2)}_2$}
We decompose $P_{\V^{(2)}}$ to write $P_{\V^{(2)}}= P_{\V^{(2)}_1}+ P_{\V^{(2)}_2}$, where the two components are given respectively by, for $U\in L^2\Gamma(\otimes^2\HH)$ and  a.e. $(s,t)\in [0,T]\times[0,T]$,
\[
(\frac{\D}{ds}\otimes \frac{\D}{dt})P_{\V^{(2)}_1}(U)_{s,t}
= \mathbf{1}_{[0,t)}(s) \E\left[(\frac{\D}{ds}\otimes \frac{\D}{dt})U_{s,t}|\F^{x_0}_{s\lor t}\right],  
\]
and
\[
(\frac{\D}{ds}\otimes \frac{\D}{dt})P_{\V^{(2)}_2}(U)_{s,t}=  \mathbf{1}_{[0,s)}(t) \E\left[(\frac{\D}{ds}\otimes \frac{\D}{dt})U_{s,t}|\F^{x_0}_{s\lor t}\right], 
\]
with images denoted by $\V^{(2)}_1$ and $\V^{(2)}_2$, respectively.  
It is easy to see that
\begin{eqnarray*}
\left[(\frac{\D}{ds}\otimes \frac{\D}{dt}) \tau P_{\V^{(2)}_1}(U)\right]_{s,t} 
&=& 
\tau \left[(\frac{\D}{dt}\otimes \frac{\D}{ds}) P_{\V^{(2)}_1}(U)\right]_{t,s} \\
&=& \tau\left(\mathbf{1}_{[0,s)}(t) \E\left[(\frac{\D}{dt}\otimes \frac{\D}{ds})U_{t,s}|\F^{x_0}_{s\lor t}\right]\right) \\
&=& \mathbf{1}_{[0,s)}(t) \E\left[(\frac{\D}{ds}\otimes \frac{\D}{dt})(\tau U)_{s,t}|\F^{x_0}_{s\lor t}\right],
\end{eqnarray*}
so the two components are related by the following identity
\begin{equation}
\label{eq:skew-symmetrisation}
\tau P_{\V^{(2)}_1} = P_{\V^{(2)}_2}\tau.
\end{equation}
Equation (\ref{eq:mnfd_NP}) in Proposition 3.6 can be expressed as
\[
\pnabla[\sharp] P_\V = P_{\V^{(2)}_2}\pnabla[\sharp],
\]
which implies that the covariant derivative of any adapted $\D^{2,1}$ $H$-vector field lies inside $\V^{(2)}_2$.  That $ \pnabla[\sharp] [\V] \subset \V^{(2)}_2$ may also be compared to the fact that $\,\pnabla[*] [\V^{(2)}_1 ] \perp \V$, proved below in Lemma \ref{lem:nabla*}. 
Similar observations regarding higher order vector fields on the Wiener space were mentioned in  \cite{yang2011generalised}.
\subsection{The domain and range of $\pnabla[*]$}
Given any $U\in L^2\Gamma(\otimes^2\HH)$ and $V_1\in \D^{2,1}\HH$, we see from identity (\ref{eq:nabla^sharp_XY}) that 
\[
\E \langle\,\pnabla[\sharp] V_1, U\rangle_{\otimes^2\HH}  
= \E \langle (\mathbb{X}\otimes \id)\nabla(\mathbb{Y}V_1), U\rangle_{\otimes^2\HH} 
= - \E \langle V_1, \mathbb{X}\, \div[(\mathbb{Y}\otimes \id)U]\rangle_{\HH}.
 \]
So in general, we have $U\in\Dom(\,\pnabla[*])$ and 
$\pnabla[*]U=-\mathbb{X}\div[(\mathbb{Y}\otimes \id)U]$ provided 
$(\mathbb{Y}\otimes\id)U\in\Dom(\div)$.  
For the special case of $U\in\V_1^{(2)} \subset L^2\Gamma(\otimes^2\HH)$, we have the following 
\begin{lemma}
\label{lem:nabla*}
Suppose $U\in\V^{(2)}_1$. Then 
$U \in\Dom(\pnabla[*])$ and 
\begin{equation}
\label{eq:pnabla*}
\frac{\D}{dt}(\,\pnabla[*]U) =
\int_t^T \!\!{}_{(2)}\!\langle (\frac{\D}{dt}\otimes \frac{\D}{ds})U_{t,s},  
d\{\sigma\}_s\rangle_{\sigma_s}.
\end{equation}
Thus, $\V^{(2)}_1 \subset\Dom(\,\pnabla[*])$ and $\,\pnabla[*] [\V^{(2)}_1 ] \perp \V$.
\end{lemma}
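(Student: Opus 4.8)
The plan is to start from the general adjoint identity established just before the statement: any $U$ for which $(\mathbb{Y}\otimes\id)U$ lies in $\Dom(\div)$ satisfies $U\in\Dom(\pnabla[*])$ with $\pnabla[*]U=-\mathbb{X}\,\div[(\mathbb{Y}\otimes\id)U]$, where the divergence contracts the second ($\HH$) tensor slot and leaves the first ($H$) slot. Everything then reduces to (i) checking the domain condition for $U\in\V^{(2)}_1$, (ii) evaluating the divergence explicitly, and (iii) reading off the orthogonality. Writing $u_{t,s}=(\frac{\D}{dt}\otimes\frac{\D}{ds})U_{t,s}\in T_{\sigma_t}M\otimes T_{\sigma_s}M$, membership in $\V^{(2)}_1$ means $u_{t,s}=\mathbf 1_{t<s}\,\E[u_{t,s}\mid\F^{x_0}_s]$; in particular, for each fixed $t$ the second-slot process $s\mapsto u_{t,s}$ is supported on $s>t$ and is $\F^{x_0}_s$-adapted, its first-slot factor being $\F^{x_0}_t\subset\F^{x_0}_s$-measurable. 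Hence $(\mathbb{Y}\otimes\id)U$, whose $(\frac{d}{dt}\otimes\frac{\D}{ds})$-component is $(Y_{\sigma_t}\otimes\id)u_{t,s}$, is adapted in its $\HH$-slot, so it lies in $\Dom(\div)$ and $U\in\Dom(\pnabla[*])$.

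For the explicit formula I would use that the divergence of an adapted $\HH$-field is the It\^o integral $\Div W=-\int_0^T\langle\frac{\D}{ds}W_s,d\{\sigma\}_s\rangle_{\sigma_s}$, exactly as in the proof of Lemma \ref{lem:commutation}; applied slotwise this produces an $H$-valued function whose $\frac{d}{dt}$-component is $-Y_{\sigma_t}\int_t^T{}_{(2)}\!\langle u_{t,s},d\{\sigma\}_s\rangle_{\sigma_s}$. Composing with $\mathbb{X}$, for which $\frac{\D}{dt}(\mathbb{X}g)=X(\sigma_t)\dot g_t$, and using the surjectivity identity $X(\sigma_t)Y_{\sigma_t}=\id_{T_{\sigma_t}M}$ to collapse the $X,Y$ factors, yields precisely the asserted (\ref{eq:pnabla*}). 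A short It\^o-isometry estimate shows the right-hand side of (\ref{eq:pnabla*}) is bounded $\V^{(2)}_1\to L^2\Gamma\HH$, which both confirms $\pnabla[*]U\in L^2\Gamma\HH$ and, since $\pnabla[*]$ is closed, lets me reduce the verification to the set $P_{\V^{(2)}_1}\mathcal{S}_b^{(2)}$, which is total in $\V^{(2)}_1$ by Lemma \ref{lem:manifold_totality} and on which the second-slot integrands are bounded and adapted so that all stochastic integrals are classical It\^o integrals, before extending to all of $\V^{(2)}_1$ by continuity.

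The orthogonality $\pnabla[*][\V^{(2)}_1]\perp\V$ then falls out of the forward structure of (\ref{eq:pnabla*}). For adapted $W\in\V$ the component $\frac{\D}{dt}W_t$ is $\F^{x_0}_t$-measurable, whereas $\frac{\D}{dt}(\pnabla[*]U)=\int_t^T{}_{(2)}\!\langle u_{t,s},d\{\sigma\}_s\rangle_{\sigma_s}$ is a forward It\^o integral over $[t,T]$ with $\F^{x_0}_s$-adapted integrand, so $\E[\frac{\D}{dt}(\pnabla[*]U)\mid\F^{x_0}_t]=0$. Conditioning on $\F^{x_0}_t$ inside $\E\langle\pnabla[*]U,W\rangle_{\HH}=\E\int_0^T\langle\frac{\D}{dt}(\pnabla[*]U),\frac{\D}{dt}W_t\rangle_{\sigma_t}\,dt$ therefore annihilates each integrand, giving $\pnabla[*]U\perp\V$. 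I expect the main obstacle to be the two-parameter bookkeeping in the middle step: identifying the second-slot divergence of $(\mathbb{Y}\otimes\id)U$ with a forward It\^o integral uniformly in the first (coefficient) slot, and checking that adaptedness of the second slot alone---rather than of $U$ itself---already places $(\mathbb{Y}\otimes\id)U$ in $\Dom(\div)$; the remaining algebra, including the collapse $X(\sigma_t)Y_{\sigma_t}=\id$, is routine.
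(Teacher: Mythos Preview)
Your proposal is correct and follows essentially the same route as the paper: both use the adjoint identity $\pnabla[*]U=-\mathbb{X}\,\div[(\mathbb{Y}\otimes\id)U]$, verify adaptedness in the second slot to land in $\Dom(\div)$, and then evaluate the resulting It\^o integral. The only minor differences are that the paper computes $\div[(\mathbb{Y}\otimes\id)U]_t$ directly via a stochastic Fubini step rather than first reducing to a total set, and that your conditioning argument for $\pnabla[*][\V^{(2)}_1]\perp\V$ spells out what the paper leaves implicit.
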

\begin{proof}
Following the preceding remark, we verify $(\mathbb{Y}\otimes\id)U\in\Dom(\div)$. 
By the definition of $\mathbb{Y}$ in (\ref{eq:bold_Y}), we have, for a.e. $s\in[0,T]$, 
\begin{eqnarray*}
(\id\otimes\frac{\D}{ds})[(\mathbb{Y}\otimes\id)U]_{t,s}
&=&(\id\otimes\frac{\D}{ds})\int_0^t(Y_{\sigma_r}\frac{\D}{dr}\otimes\id)U_{r,s}dr\\
&=&\int_0^{s\land t}(Y_{\sigma_r}\frac{\D}{dr}\otimes\frac{\D}{ds})U_{r,s} dr,
\end{eqnarray*}
which is $\F_{s}^{x_0}$-measurable for all $s$. 
Hence indeed $(\mathbb{Y}\otimes\id)U\in\Dom(\div)$, and $\div[(\mathbb{Y}\otimes\id)U]$ is the It\^o-integral given by
\begin{eqnarray*}
\div[(\mathbb{Y}\otimes\id)U]_t
&=& -\int_0^T\!\!{}_{(2)}\!\langle\int_0^{s\land t}(Y_{\sigma_r}\frac{\D}{dr}\otimes\frac{\D}{ds})U_{r,s} dr,d\{\sigma\}_s\rangle_{\sigma_s}\\
&=& -\int_0^t\int_r^T\!\!{}_{(2)}\!\langle(Y_{\sigma_r}\frac{\D}{dr}\otimes\frac{\D}{ds})U_{r,s},d\{\sigma\}_s\rangle_{\sigma_s} dr,
\end{eqnarray*}
where we used the fact that  $(Y_{\sigma_r}\frac{\D}{dr}\otimes\frac{\D}{ds})U_{r,s} \in \F_s^{x_0}$ to apply Fubini's theorem.  
We conclude that 
\begin{eqnarray*}
\frac{\D}{dt}(\,\pnabla[*]U) 
&=& - \frac{\D}{dt}\mathbb{X}\,\div(\mathbb{Y}\otimes \id)U\\
&=& X_{\sigma_t}\frac{d}{dt}\left[\int_0^t\int_r^T\!\!{}_{(2)}\!\langle(Y_{\sigma_r}\frac{\D}{dr}\otimes\frac{\D}{ds})U_{r,s},d\{\sigma\}_s\rangle_{\sigma_s} dr\right]\\
&=& \int_t^T\!\!{}_{(2)}\!\langle(\frac{\D}{dt}\otimes\frac{\D}{ds})U_{t,s},d\{\sigma\}_s\rangle_{\sigma_s}.\qedhere
\end{eqnarray*}
\end{proof}
\begin{remark}
\label{re:isometry_subsp}
From the lemma, we see that $\V^{(2)}_1$ is an isometry subspace of $L^2\Gamma(\otimes^2\HH)$ in the sense of Wu \cite{wu1990traitement} for the map $\,\pnabla[*]:L^2\Gamma(\otimes^2\HH)\to L^2\Gamma\HH$, i.e., 
\[ 
\|\,\pnabla[*]U \|_{L^2\Gamma\HH} = \|U \|_{L^2\Gamma(\otimes^2\HH)},\quad \forall U\in\V^{(2)}_1.
 \]
Indeed, applying the It\^o isometry we see 
\begin{eqnarray*}
\|\,\pnabla[*]U \|^2_{L^2\Gamma\HH}
&=&\E\int_0^T|\frac{\D}{dt}(\pnabla[*]U)_t|^2 dt\\
&=&\E\int_0^T|\int_t^T \!\!{}_{(2)}\!\langle (\frac{\D}{dt}\otimes \frac{\D}{ds})U_{t,s}, d\{\sigma\}_s\rangle_{\sigma_s}|^2 dt\\
&=&\int_0^T\int_t^T \E |(\frac{\D}{dt}\otimes \frac{\D}{ds})U_{t,s}|^2 dsdt\\
&=&\|U \|^2_{L^2\Gamma(\otimes^2\HH)}.
\end{eqnarray*}
\end{remark}
\section {The space of two-vectors $\HH^{(2)}\!$ and the exterior derivative $d^1$}
We define $\HH^{(2)}$ as a perturbation of $\wedge^2\HH$ in $\wedge^2 TC_{x_0}$, i.e., for $u\in \wedge^2 TC_{x_0}$, 
 \begin{eqnarray*}
u\in {\HH}^{(2)}
&\Longleftrightarrow
&u-\pathR(u)\in \wedge^2 {\cal H}\\
&\Longleftrightarrow& u=(\id+Q)(v),\quad \text{for some } v\in \wedge^2\cal{H}; 
\end{eqnarray*}
see \cite{elworthy2000special, elworthy2008l2}. Here $\pathR:\wedge^2TC_{x_0}\to \wedge^2 TC_{x_0}$ is the curvature operator of the damped Markovian connection, and the linear map $Q_\sigma:\wedge^2 \HH\to \wedge^2T_\sigma C_{x_0}$  
is defined by 
\[
Q_\sigma(G)_{s,t} = (W_s\otimes W_t) \left(\wedge^2 (W_.^{-1}) W_.^2\int_0^. (W_r^2)^{-1} [\RR_{\sigma_r}(G_{r,r})]dr\right)_{s\land t},
\]
with $W^2_t:\wedge^2T_{x_0}M\to \wedge^2T_{\sigma_t}M$ being the damped parallel translation of two-vectors using the second Weitzenb\"ock curvature, and $\RR:\wedge^2TM\to\wedge^2TM$ the curvature operator on $M$. 
\par
In particular,  $1 + Q$ and $1-\pathR$ are inverse of each other (\cite{elworthy2008l2} Lemma 4.2). By requiring that these operators give an isometry between $\HH^{(2)}$ and $\wedge^2\HH$, we can define a Riemannian structure on $\HH^{(2)}$ using which, almost surely, 
\begin{equation}
\label{eq:duality}
1-\pathR=(1+Q)^*: \HH^{(2)}\to \wedge^2\HH.
\end{equation}
As in \cite{elworthy2000special, elworthy2008l2}, an exterior differentiation  operator 
\[
d^1\!\!: \Dom(d^1)\subset L^2\Gamma\HH^*\to L^2\Gamma (\HH^{(2)*})
\] 
can be defined as the closure of the geometric exterior derivative acting on cylindrical one-forms and then restricted to the fibres of  $\HH^{(2)}$. That is, for a smooth cylindrical one-form $\phi$, our new exterior derivative
of $\phi$ restricted to $\HH$ is just $d^1\phi|_{\HH^{(2)}}$, 
where $d^1$ refers to the geometric exterior derivative. We then have $d^1d=0$ and a Hodge decomposition
\begin{equation}
\label{eq:path_Hodge1a}
L^2\Gamma \HH^*= {\image(d)}\bigoplus \mathbf{H}^1(C_{x_0})\bigoplus \overline{\image(d^{1*})}.
\end{equation}
\subsection{$\Div^1$ and $\DIV^1$} 
By definition, the divergence operator $\Div^1:\Dom (\Div^1)\subset L^2\Gamma\HH^{(2)}\to L^2\Gamma\HH$ is minus the co-joint of $d^1$, i.e., 
\[
\int_{C_{x_0}} d^1\phi(U)d\mu=-\int_{C_{x_0}}\phi( \Div^1U)d\mu, \quad \phi\in \Dom(d^1),U\in \Dom(\Div^1).
\]
Its domain is the set $\{U\in L^2\Gamma\HH^{(2)}: U^\sharp\in \Dom(d^{1*})\}$, on which we have $(\Div^1 U)^\sharp=- d^{1*}(U^\sharp)$ analogously to the usual divergence acting on $H$-vector fields.
\par
More generally, a measurable geometric two-vector field $U\in \Gamma(\wedge^2TC_{x_0})$ is said to \emph{have a divergence} if there exists an integrable vector field $\DIV^1U\in \Gamma \HH$ such that
$d^1\phi(U):C_{x_0}\to \R$ is integrable  for all smooth  cylindrical one-forms $\phi$ on $C_{x_0}$, and
\[
\int_{C_{x_0}} d^1\phi(U)d\mu=-\int_{C_{x_0}}\phi(\DIV^1U)d\mu.
\]
A class of examples of such vector fields on the classical Wiener space was given in Section 8 of \cite{elworthy2008l2}. 
Similarly, the operator $\DIV$ extends $\div$ for geometric vector fields.  
\par
Note that if $U$ takes values in $\HH^{(2)}$ and is in the domain of $\Div^1$, then it has a divergence in this extended sense and $\DIV^1 U=\Div^1 U$. 
\par
The following result from Theorem 9.3 of \cite{elworthy2008l2} appears to be crucial in our theory. Earlier Cruzeiro and Fang \cite{cruzeiro1997anl2estimate} had shown that, for a class of adapted primitive $U$, the geometric vector field $\pathT(U)$ has a vanishing divergence.
\begin{theorem}[\cite{elworthy2008l2}]
\label{th:manifold_torsion_div}
Suppose $U\in \mathcal{S}_{b,skew}^{(2)}$. Then the geometric two-vector field $Q(U)$ has a divergence and
\[
\DIV^1[Q(U)]=\frac{1}{2} \pathT(U).
\]
Therefore, $\DIV(\pathT(U))=0$, i.e.,  given any smooth cylindrical $f:C_{x_0}\to \R$, 
\[
\E\, df[\pathT(U)] = 0.
\]
\end{theorem}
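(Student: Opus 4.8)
The plan is to unwind the definition of ``having a divergence'' and reduce everything to a single pointwise identity expressing that the curvature deformation $Q$ exactly cancels the torsion obstruction. By definition, $Q(U)$ has a divergence equal to $\frac12\pathT(U)$ precisely when, for every smooth cylindrical one-form $\phi$ on $C_{x_0}$, the function $d^1\phi(Q(U))$ is integrable and
\[
\int_{C_{x_0}} d^1\phi(Q(U))\,d\mu = -\tfrac12\int_{C_{x_0}}\phi(\pathT(U))\,d\mu .
\]
So first I would record the integrability: since $U=V_1\wedge V_2$ with $V_1,V_2\in L^\infty\Gamma\HH$ and since $\pathT$ and the curvature operator $\pathR$ are $L^p$ sections for every $p<\infty$, both $\pathT(U)$ and $Q(U)=(\id+Q)U-U$ are $L^p$ geometric fields, so $\phi(\pathT(U))$ and $d^1\phi(Q(U))$ are integrable for cylindrical $\phi$. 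By linearity it then suffices to treat a single primitive $U$, and Lemma \ref{lem:manifold_totality} guarantees that the class $\mathcal{S}_{b,skew}^{(2)}$ is large enough for the later density arguments.

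The heart of the matter is the pointwise identity $d^1\phi(Q(v)) = -\frac12\,(\phi\circ\pathT)(v)$ for $v\in\wedge^2\HH$. To obtain it I would use the defining feature of the space $\HH^{(2)}=(\id+Q)(\wedge^2\HH)$: the exterior derivative of the $\HH^{(2)}$-complex is the geometric $d^1$ restricted to $\HH^{(2)}$, and the deformation $Q$, together with the duality $\id-\pathR=(\id+Q)^*$ in (\ref{eq:duality}), is built so that on $\HH^{(2)}$ the torsion term of (\ref{eq:pextder}) disappears; concretely, for $v\in\wedge^2\HH$,
\[
d^1\phi\big((\id+Q)v\big) = \langle \mathfrak{D}^1\phi^\sharp, v\rangle_{\wedge^2\HH}.
\]
Subtracting from this the value on $\wedge^2\HH$ supplied by (\ref{eq:manifold_d1}), namely $d^1\phi(v)=\langle\mathfrak{D}^1\phi^\sharp,v\rangle_{\wedge^2\HH}+\frac12(\phi\circ\pathT)(v)$, gives $d^1\phi(Q(v))=d^1\phi((\id+Q)v)-d^1\phi(v)=-\frac12(\phi\circ\pathT)(v)$, as required. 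Integrating this identity over $C_{x_0}$ against $\mu$, applied to the primitive $U$, produces the displayed divergence relation, so $\DIV^1[Q(U)]=\frac12\pathT(U)$.

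For the ``therefore'' I would read off $\DIV(\pathT(U))=0$ from $d^1d=0$: taking $\phi=df$ for a smooth cylindrical $f$, the fact that $d^1(df)=0$ together with the divergence relation just proved gives $0=\int d^1(df)(Q(U))\,d\mu=-\int df(\DIV^1[Q(U)])\,d\mu=-\frac12\int df(\pathT(U))\,d\mu$, whence $\E\,df[\pathT(U)]=0$.

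I expect the main obstacle to be the rigorous justification of the torsion-cancelling identity $d^1\phi((\id+Q)v)=\langle\mathfrak{D}^1\phi^\sharp,v\rangle$ at the level of \emph{geometric} forms. The difficulty is that $Q(U)$ is a genuine geometric two-vector field lying in neither $\wedge^2\HH$ nor $\HH^{(2)}$, so (\ref{eq:manifold_d1}) — valid only for $\wedge^2\HH$ arguments — cannot be applied directly; one must instead unwind the explicit definition of $Q$ in terms of the manifold curvature $\RR$ and the second-Weitzenb\"ock damped transport $W^2$, and match it against the known form of the torsion $\pathT$ of the damped Markovian connection (Appendix B of \cite{elworthy2008l2}). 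It is precisely here that adaptedness and boundedness enter: as in the proof of Lemma \ref{lem:commutation}, one computes $d^1\phi(Q(U))$ through the It\^o map, and the adaptedness of $V_1,V_2$ lets one discard the martingale contributions by conditioning, while boundedness secures the integrability needed for the divergence to exist. Establishing this cancellation for all cylindrical $\phi$, with the requisite uniform $L^p$ control, is the technical core of the argument.
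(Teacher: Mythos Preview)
The paper gives no proof of this statement; it is quoted as Theorem~9.3 of \cite{elworthy2008l2}. Your proposal, however, contains a genuine gap rather than an alternative route.

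The step on which everything rests is the identity $d^1\phi\big((\id+Q)v\big)=\langle\mathfrak D^1\phi^\sharp,v\rangle_{\wedge^2\HH}$, from which you obtain $d^1\phi(Q(v))=-\tfrac12(\phi\circ\pathT)(v)$ by subtracting (\ref{eq:manifold_d1}). In the logical order of the paper this identity (in its integrated form (\ref{eq:key})) is a \emph{consequence} of Theorem~\ref{th:manifold_torsion_div}: the Proposition containing (\ref{eq:key}) is proved by invoking Theorem~\ref{th:manifold_torsion_div} to rewrite $\tfrac12\,\E\,\phi[\pathT(U)]$ as $-\E\,d^1\phi[Q(U)]$. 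Likewise Lemma~\ref{le:sought}, which relates $(d^1\phi)^\sharp$ to $\mathfrak D^1\phi^\sharp$, is downstream of (\ref{eq:key}). So invoking either of these here is circular. Nor can the identity be ``read off'' from the construction of $\HH^{(2)}$: declaring $\id+Q$ to be an isometry of $\wedge^2\HH$ onto $\HH^{(2)}$ fixes an inner product but says nothing, a priori, about the value of the \emph{geometric} two-form $d^1\phi$ on the geometric two-vector $(\id+Q)v$. The fact that the hypothesis singles out adapted $U\in\mathcal S^{(2)}_{b,skew}$ already tells you that no pointwise identity of the form $d^1\phi(Q(v))=-\tfrac12\phi(\pathT(v))$ for arbitrary $v\in\wedge^2\HH_\sigma$ can be expected; if it held, adaptedness would be irrelevant and (\ref{eq:key}) would extend to all of $L^2\Gamma(\wedge^2\HH)$, which it does not.

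Your final paragraph correctly locates the actual work: one must evaluate the geometric $d^1\phi$ on $Q(U)$ by unpacking the explicit formula for $Q$ in terms of $\RR$ and $W^2$, match it against the explicit torsion $\pathT$ from Appendix~B of \cite{elworthy2008l2}, and carry through an It\^o-calculus computation in which the adaptedness of $V_1,V_2$ kills the martingale contributions under the expectation. That computation \emph{is} the content of Theorem~9.3 in \cite{elworthy2008l2}; what you have written is a description of the desired cancellation, not a derivation of it. Your deduction of $\DIV(\pathT(U))=0$ from the first claim via $d^1d=0$ is, on the other hand, correct.
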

\begin{proposition}
If $U\in \V_{skew}^{(2)}$ and $\phi$ is a smooth cylindrical one-form, 
\begin{equation}
\label{eq:key}
\E\, d^1\phi[(1+Q)U]=\E \langle  \mathfrak{D}^1\phi^\sharp, U\rangle_{\wedge^2\HH}.
\end{equation}
\end{proposition}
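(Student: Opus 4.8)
The plan is to exploit the linearity and $L^2$-continuity of both sides of (\ref{eq:key}) in $U$, so that by Lemma \ref{lem:manifold_totality} it suffices to verify the identity on the total set $\mathcal{S}_{b,skew}^{(2)}$, that is, for primitive fields $U=V_1\wedge V_2$ with $V_1,V_2\in L^\infty\Gamma\HH$ adapted. The right-hand side is continuous in $U$ since $\mathfrak{D}^1\phi^\sharp\in L^2\Gamma(\wedge^2\HH)$ for a smooth cylindrical $\phi$ (because $\phi^\sharp\in\D^{2,1}\HH\subset\Dom(\mathfrak{D}^1)$), so $\langle\mathfrak{D}^1\phi^\sharp,\cdot\rangle_{\wedge^2\HH}$ is a bounded functional on $L^2\Gamma(\wedge^2\HH)$. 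For the left-hand side I would use that $1+Q$ is, by construction, an isometry of $\wedge^2\HH$ onto $\HH^{(2)}$, so $\|(1+Q)U\|_{L^2\Gamma\HH^{(2)}}=\|U\|_{L^2\Gamma(\wedge^2\HH)}$; since $\phi\in\Dom(d^1)$ gives $d^1\phi\in L^2\Gamma\HH^{(2)*}$, a Cauchy--Schwarz estimate shows that $U\mapsto\E\,d^1\phi[(1+Q)U]$ is likewise continuous. Thus the identity, once proved on $\mathcal{S}_{b,skew}^{(2)}$, extends to all of $\V_{skew}^{(2)}$.

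For primitive adapted bounded $U=V_1\wedge V_2$ the core computation is to split $(1+Q)U=U+Q(U)$ and evaluate the two geometric pairings separately. Applying the pointwise formula (\ref{eq:manifold_d1}) to the $H$-two-vector $U$ and taking expectations yields
\[
\E\,d^1\phi(U)=\E\langle\mathfrak{D}^1\phi^\sharp,U\rangle_{\wedge^2\HH}+\frac{1}{2}\,\E\,\phi(\pathT(U)).
\]
For the second piece, Theorem \ref{th:manifold_torsion_div} applies exactly because $U\in\mathcal{S}_{b,skew}^{(2)}$: it asserts that the geometric two-vector field $Q(U)$ has a divergence with $\DIV^1[Q(U)]=\frac{1}{2}\pathT(U)$, so that by the defining property of the divergence
\[
\E\,d^1\phi(Q(U))=-\,\E\,\phi\bigl(\DIV^1[Q(U)]\bigr)=-\frac{1}{2}\,\E\,\phi(\pathT(U)).
\]
Adding the two contributions, the torsion terms cancel exactly and leave $\E\,d^1\phi[(1+Q)U]=\E\langle\mathfrak{D}^1\phi^\sharp,U\rangle_{\wedge^2\HH}$, as required. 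Integrability of each term is immediate since $V_1,V_2$ are bounded, $\pathT$ is an $L^p$ section for every finite $p$, and both $\phi$ and $\mathfrak{D}^1\phi^\sharp$ are well controlled for smooth cylindrical $\phi$.

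The main obstacle I anticipate is not the algebra of the cancellation but the careful matching of the two torsion contributions: one must be sure that the factor $\frac{1}{2}$ arising in (\ref{eq:manifold_d1}) (a consequence of our convention of inserting a factor $2$ in the exterior derivative) coincides precisely with the factor produced by $\DIV^1[Q(U)]=\frac{1}{2}\pathT(U)$, and that the extended notion of ``having a divergence'' for the geometric field $Q(U)$ --- which lies neither in $\wedge^2\HH$ nor in $\HH^{(2)}$ --- is the correct object to pair against a smooth cylindrical $\phi$. A secondary delicate point is the continuity of the left-hand side for general $U\in\V_{skew}^{(2)}$: since $Q(U)$ need not then have a divergence in the strong sense, the density argument must be run on the single functional $U\mapsto\E\,d^1\phi[(1+Q)U]$ directly, relying on the isometry property of $1+Q$ and the duality (\ref{eq:duality}), rather than term by term.
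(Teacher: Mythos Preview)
Your proposal is correct and follows essentially the same approach as the paper: reduce to $U\in\mathcal{S}_{b,skew}^{(2)}$ via Lemma~\ref{lem:manifold_totality}, apply (\ref{eq:manifold_d1}) to $U$ and Theorem~\ref{th:manifold_torsion_div} to $Q(U)$, and observe that the torsion terms cancel. The paper's version is terser, simply writing the chain $\E\,d^1\phi(U)=\E\langle\mathfrak{D}^1\phi^\sharp,U\rangle+\tfrac{1}{2}\E\,\phi[\pathT(U)]=\E\langle\mathfrak{D}^1\phi^\sharp,U\rangle-\E\,d^1\phi[Q(U)]$ and leaving the density extension implicit, whereas you spell out the $L^2$-continuity of both sides.
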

\begin {proof}
We can suppose $U\in \mathcal{S}_{b,skew}^{(2)}$; the general result follows by approximation using Lemma \ref{lem:manifold_totality}. Application of  (\ref{eq:manifold_d1}) and Theorem \ref{th:manifold_torsion_div} then yields 
\begin{eqnarray*}
\E\,d^1\phi\left(U\right)
&=&\E\langle \mathfrak{D}^1\phi^\sharp, U\rangle_{\wedge^2\HH} +
\frac{1}{2}\E\,\phi[\pathT(U)] \\
&=& \E\langle  \mathfrak{D}^1\phi^\sharp, U\rangle_{\wedge^2\HH} - \E\,d^1\phi[Q(U)].
\qedhere
\end{eqnarray*}
\end{proof}
\subsection{$d^1$ and $\overline{\mathfrak{D}^1}$}
It is convenient to define a new operator $\overline{\mathfrak{D}^1}$ from  $L^2\Gamma \HH^*$ to $L^2\Gamma (\wedge^2 \HH)$ with initial domain 
$\{\phi\in L^2\Gamma \HH^* :\phi^\sharp\in\Dom(\mathfrak{D}^1)\}$, by 
\begin{equation}
\label{eq:manifold_barD1_D1}
\overline{\mathfrak{D}^1} \phi= P_{\V^{(2)}}\mathfrak{D}^1\phi^{\sharp}.
\end{equation}
Recall that $P_{\V^{(2)}}$ is the projection of $L^2\Gamma (\otimes^2 \HH)$ onto $\V^{(2)}$.  Thus
\[
(\frac{\D}{ds}\otimes \frac{\D}{dt})(\overline{\mathfrak{D}^1} \phi)_{s,t} =
\E[(\frac{\D}{ds}\otimes \frac{\D}{dt})(\mathfrak{D}^1\phi^{\sharp})_{s,t} |\F_{s\lor t}^{x_0}].
\]
Using the following lemma, we take the closure of $\overline{\mathfrak{D}^1}$ without change of notation. 
\begin{lemma}
\label{le:manifold_barD_closable}
The operator $\overline{\mathfrak{D}^1}$ is closable on $\D^{2,1}$. 
\end{lemma}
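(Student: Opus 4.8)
The plan is to prove closability by exhibiting a dense domain for the adjoint $(\overline{\mathfrak{D}^1})^*$. Observe first that since $\overline{\mathfrak{D}^1}\phi = P_{\V^{(2)}}\mathfrak{D}^1\phi^{\sharp}$, where $\mathfrak{D}^1\phi^{\sharp}$ is skew-symmetric and $P_{\V^{(2)}}$ commutes with the flip $\tau$ by (\ref{eq:skew-symmetrisation}) and so preserves skew-symmetry, the range of $\overline{\mathfrak{D}^1}$ lies in the closed subspace $\V^{(2)}_{skew}=\V^{(2)}\cap L^2\Gamma(\wedge^2\HH)$. Consequently every $\psi\in(\V^{(2)}_{skew})^{\perp}$ lies trivially in $\Dom((\overline{\mathfrak{D}^1})^*)$, with adjoint value $0$, and the whole problem reduces to showing that $\Dom((\overline{\mathfrak{D}^1})^*)\cap\V^{(2)}_{skew}$ is dense in $\V^{(2)}_{skew}$.

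The main computation is an integration by parts. Fix $\phi$ with $\phi^{\sharp}\in\D^{2,1}\HH$ and take any $U\in\V^{(2)}_{skew}\cap\Dom(\pnabla[*])$. Since $U\in\V^{(2)}$ we have $P_{\V^{(2)}}U=U$, so the self-adjointness of the orthogonal projection $P_{\V^{(2)}}$ gives $\langle\overline{\mathfrak{D}^1}\phi,U\rangle=\langle\mathfrak{D}^1\phi^{\sharp},U\rangle$. As $U$ is skew and lies in $\Dom(\mathfrak{D}^{1*})=L^2\Gamma(\wedge^2\HH)\cap\Dom(\pnabla[*])$, Lemma \ref{lem:mnfd_closability} lets us transfer the derivative to obtain $\langle\overline{\mathfrak{D}^1}\phi,U\rangle=\langle\phi^{\sharp},\mathfrak{D}^{1*}U\rangle=-\langle\phi^{\sharp},\pnabla[*]U\rangle$. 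The right-hand side is continuous in $\phi$, so $U\in\Dom((\overline{\mathfrak{D}^1})^*)$ with $(\overline{\mathfrak{D}^1})^*U=-(\pnabla[*]U)^{\sharp}$. Hence $\V^{(2)}_{skew}\cap\Dom(\pnabla[*])\subseteq\Dom((\overline{\mathfrak{D}^1})^*)$, and it remains only to show this subspace is dense in $\V^{(2)}_{skew}$.

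For density I would invoke the totality result, Lemma \ref{lem:manifold_totality}: the primitive two-vectors $V_1\wedge V_2$ with $V_1,V_2\in L^{\infty}\Gamma\HH$ adapted are total in $\V^{(2)}_{skew}$. Inspecting the construction in that lemma, the factors may in addition be taken in $\D^{2,1}\HH$, since the frame fields $h^j_t=\parals_t h^{x_0,j}_t$ are differentiable and the bounded adapted scalar multipliers can be chosen in $\D^{2,1}$; the resulting $V_1\wedge V_2$ remain total. For factors that are bounded, adapted and in $\D^{2,1}\HH$, the remark preceding Lemma \ref{lem:nabla*}, combined with the product rule for the divergence and Lemma \ref{lem:commutation}, shows $(\mathbb{Y}\otimes\id)(V_1\otimes V_2)\in\Dom(\div)$ and yields $\pnabla[*](V_1\otimes V_2)=(\Div V_2)V_1-\pnabla_{V_2}V_1$; skew-symmetrising gives $V_1\wedge V_2\in\Dom(\pnabla[*])$. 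Thus a total family of $\V^{(2)}_{skew}$ lies in $\Dom(\pnabla[*])$, its linear span is dense in $\V^{(2)}_{skew}$ and contained in $\V^{(2)}_{skew}\cap\Dom(\pnabla[*])$, and closability of $\overline{\mathfrak{D}^1}$ follows.

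I expect the last step to be the main obstacle: verifying that the primitive adapted two-vectors genuinely lie in $\Dom(\pnabla[*])$ while still forming a total family, which forces a balance between the regularity of the factors and the approximation afforded by Lemma \ref{lem:manifold_totality}. The delicate point is that one cannot shortcut this by an abstract inclusion $\V^{(2)}_{skew}\subseteq\Dom(\pnabla[*])$: Lemma \ref{lem:nabla*} only covers the component $\V^{(2)}_1$, and for the complementary component the twisted pairing $\langle\tau\pnabla[\sharp]V,\cdot\rangle$ reintroduces $\mathfrak{D}^1$ and so does not close up. Routing the argument through explicit primitive tensors, where the divergence is computed directly by the Malliavin product rule, circumvents this difficulty.
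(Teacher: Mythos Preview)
Your argument is correct in outline, but it follows a genuinely different route from the paper's proof. The paper does not go via the adjoint at all: it checks the closability condition directly by taking $\phi_j\to 0$ in $L^2$ with $\overline{\mathfrak{D}^1}\phi_j\to U$, and then feeds the $\phi_j$ into the preliminary Clark--Ocone identity of Proposition~\ref{prop:mnfd_CO_1a}. That identity expresses $\frac{\D}{dt}\nabla CO(\phi_j)$ as $\frac{\D}{dt}\phi_j^\sharp$ plus an It\^o integral against $(\frac{\D}{dt}\otimes\frac{\D}{ds})(\overline{\mathfrak{D}^1}\phi_j)_{t,s}$; since $CO(\phi_j)\to 0$ and $\nabla$ is closed, both sides tend to $0$, and the It\^o isometry forces $U=0$. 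The virtue of that argument is that it needs nothing beyond the material already assembled for Proposition~\ref{prop:mnfd_CO_1a}, and it foreshadows the isometry-subspace structure of Remark~\ref{re:isometry_subsp}; indeed the same computation is recycled verbatim to prove Corollary~\ref{co:manifold_don_e_closable}.

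Your adjoint-density approach is the standard functional-analytic one and is perfectly viable; it buys modularity at the cost of the extra regularity work you anticipated. Two small points: the reference to Lemma~\ref{lem:commutation} is not actually needed for $V_1\otimes V_2\in\Dom(\pnabla[*])$, since the product rule $\div(g\otimes V)=g\,\div V+\nabla_Vg$ from the start of Section~3, applied with $g=\mathbb{Y}V_1\in\D^{2,1}(C_{x_0};H)$ and $V=V_2$ adapted, already gives $(\mathbb{Y}\otimes\id)(V_1\otimes V_2)\in\Dom(\div)$; and the resulting formula is $\pnabla[*](V_1\otimes V_2)=-(\Div V_2)V_1-\pnabla_{V_2}V_1$, so your sign on the first term is off, though this is immaterial for the argument. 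Your caution about not shortcutting through an inclusion $\V^{(2)}_{skew}\subset\Dom(\pnabla[*])$ is well placed: only the $\V^{(2)}_1$ half is covered by Lemma~\ref{lem:nabla*}, and the $\V^{(2)}_2$ half genuinely requires the primitive-tensor route you took.
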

\begin{proof}
Given any sequence of $\phi_j\in \D^{2,1}$, 
$j\in \mathbb{N}$, such that $\phi_j\to 0$ and $\overline{\mathfrak{D}^1}\phi_j\to U$ in $L^2$, 
Proposition \ref{prop:mnfd_CO_1a} implies 
\begin{eqnarray*}
\frac{\D}{dt}[\nabla  CO(\phi_j)]_t 
&=& 
\frac{\D}{dt}(\phi_j^\sharp)_t +2\int_t^T\!\!{}_{(2)}\!\langle  (\frac{\D}{dt}\otimes \frac{\D}{ds})(\overline{\mathfrak{D}^1}\phi_j)_{t,s},d\{\sigma\}_s\rangle_{\sigma_s}\\
&\to&
2 \int_t^T\!\!{}_{(2)}\!\langle  (\frac{\D}{dt}\otimes \frac{\D}{ds})U_{t,s},d\{\sigma\}_s\rangle_{\sigma_s}
\end{eqnarray*} 
in $L^2$. 
Note that $U\in P_{\V^{(2)}}$, so $U_{t,\cdot}$ is adapted on $(t,T]$.  
Since $CO(\phi_j) \to 0$ and $\nabla  $ is closed, we get $\frac{\D}{dt}[\nabla  CO(\phi_j)]_t \to 0$.  Hence the It\^o integral
\[
2 \int_t^T\!\!{}_{(2)}\!\langle  (\frac{\D}{dt}\otimes \frac{\D}{ds})U_{t,s},d\{\sigma\}_s\rangle_{\sigma_s}=0, \quad \mbox{ a.e. }t\in [0,T], 
\]
so $U=0$, i.e., $\overline{\mathfrak{D}^1}\phi_j\to 0$.  
\end{proof}
The next result explains the relationship between $d^1$ and $\mathfrak{D}^1$, which we have been working towards. A more definitive version
is given in Corollary \ref{co:manifold_don_e_closable} of the next section.  Recall that by $(d^1\phi)^\sharp$, for $\phi\in \Dom( d^1)$ and in particular for a smooth cylindrical one-form, we mean the element
of $L^2\Gamma \HH^{(2)}$ corresponding to $d^1\phi\in L^2\Gamma(\HH^{(2)*})$ using the Riemannian structure of $\HH^{(2)}$.
\begin{lemma}
\label{le:sought}
For all smooth cylindrical one-forms $\phi$, 
\begin{equation}
\label{eq:manifold_barD1_d1}
\overline{\mathfrak{D}^1}\phi = P_{\V^{(2)}} (1-\pathR)
(d^1\phi)^\sharp.
\end{equation}
\end{lemma}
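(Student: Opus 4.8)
The plan is to start from the key identity (\ref{eq:key}) and transport it across the duality (\ref{eq:duality}), converting the geometric pairing $d^1\phi[(1+Q)U]$ into an inner product in $\wedge^2\HH$. Since $\overline{\mathfrak{D}^1}\phi = P_{\V^{(2)}}\mathfrak{D}^1\phi^\sharp$ by (\ref{eq:manifold_barD1_D1}), and since both $\mathfrak{D}^1\phi^\sharp$ and $(1-\pathR)(d^1\phi)^\sharp$ are $L^2$ sections of $\wedge^2\HH$ (the latter because $1-\pathR:\HH^{(2)}\to\wedge^2\HH$), it suffices to show that these two sections have the same orthogonal projection onto $\V_{skew}^{(2)}$, equivalently the same pairing with every $U\in\V_{skew}^{(2)}$.

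First I would rewrite the left-hand side of (\ref{eq:key}). For $U\in\V_{skew}^{(2)}\subset L^2\Gamma(\wedge^2\HH)$ the two-vector $(1+Q)U$ lies in $\HH^{(2)}$, and by the definition of the sharp through the Riemannian structure of $\HH^{(2)}$ we have $d^1\phi[(1+Q)U]=\langle (d^1\phi)^\sharp,(1+Q)U\rangle_{\HH^{(2)}}$. Writing $(d^1\phi)^\sharp=(1+Q)\bigl((1-\pathR)(d^1\phi)^\sharp\bigr)$, using that $1+Q$ and $1-\pathR$ are mutual inverses, and invoking the fact that $1+Q:\wedge^2\HH\to\HH^{(2)}$ is by construction an isometry, this pairing becomes $\langle (1-\pathR)(d^1\phi)^\sharp, U\rangle_{\wedge^2\HH}$. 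Substituting into (\ref{eq:key}) yields, for every $U\in\V_{skew}^{(2)}$,
\[
\E\langle \mathfrak{D}^1\phi^\sharp, U\rangle_{\wedge^2\HH}
=\E\langle (1-\pathR)(d^1\phi)^\sharp, U\rangle_{\wedge^2\HH}.
\]

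Because $\V_{skew}^{(2)}$ is a closed subspace of $L^2\Gamma(\wedge^2\HH)$, this equality of pairings identifies the $\V_{skew}^{(2)}$-projections of the two sections. To finish I would observe that $P_{\V^{(2)}}$ commutes with the flip $\tau$: combining the two forms of (\ref{eq:skew-symmetrisation}) with $\tau^2=\id$ gives $\tau P_{\V^{(2)}}=P_{\V^{(2)}}\tau$, so $P_{\V^{(2)}}$ preserves skew-symmetry and hence restricts to $P_{\V_{skew}^{(2)}}$ on $L^2\Gamma(\wedge^2\HH)$. Applying this to the skew sections $\mathfrak{D}^1\phi^\sharp$ and $(1-\pathR)(d^1\phi)^\sharp$, and using (\ref{eq:manifold_barD1_D1}), gives $\overline{\mathfrak{D}^1}\phi=P_{\V^{(2)}}\mathfrak{D}^1\phi^\sharp=P_{\V^{(2)}}(1-\pathR)(d^1\phi)^\sharp$, which is the claim.

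The step I expect to require the most care is the isometry manipulation in the second paragraph: one must keep straight that $(d^1\phi)^\sharp$ is taken with respect to the \emph{deformed} metric of $\HH^{(2)}$, not that of $\wedge^2\HH$, so that the factor $1-\pathR$ emerges precisely through the isometry $1+Q$ together with the inversion relation $(1-\pathR)(1+Q)=\id$, rather than through the adjoint $(1+Q)^*$ acting on the dual side. The remaining inputs — the integrability needed to invoke (\ref{eq:key}) for smooth cylindrical $\phi$, and the skew-preservation of $P_{\V^{(2)}}$ — are routine given the results already established.
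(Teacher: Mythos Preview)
Your proposal is correct and follows essentially the same route as the paper. The only cosmetic differences are that the paper tests against an arbitrary $U\in L^2\Gamma(\wedge^2\HH)$ and moves $P_{\V^{(2)}}$ across the pairing at the outset (implicitly using the skew-preservation you spell out), and it passes from $d^1\phi[(1+Q)\,\cdot\,]$ to $\langle (1-\pathR)(d^1\phi)^\sharp,\cdot\rangle_{\wedge^2\HH}$ via the adjoint relation (\ref{eq:duality}) rather than your equivalent isometry-plus-inverse formulation.
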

\begin{proof}
 Take any $U\in L^2\Gamma(\wedge^2\HH)$. By equations (\ref{eq:key}) and (\ref{eq:duality}), 
\[
\E \langle\overline{\mathfrak{D}^1}\phi, U\rangle_{\wedge^2\HH}
=\E \,d^1\phi[(1+Q)P_{\V^{(2)}} U]
=\E\langle  P_{\V^{(2)}} (1-\pathR)(d^1\phi)^\sharp, U\rangle_{\wedge^2\HH}.\qedhere
\]
\end{proof}
\section{A Clark-Ocone formula for one-forms and cohomology vanishing}
We now present a Clark-Ocone type formula for one-forms.  A  variety of such formulae including formulae for higher order forms were given on Wiener spaces  by Yang \cite{yang2011thesis, yang2011generalised}.
\begin{theorem}
\label{th:mnfd_CO_1}
If $\phi\in L^2\Gamma\HH^*$ is in $\Dom(d^1)$, 
we have $CO(\phi)\in \D^{2,1}$ and
\begin{equation}
\label{eq:CO1}
 \frac{\D}{dt}[\nabla  CO(\phi)- \phi^\sharp]_t
= 2\int_t^T\!\!{}_{(2)}\!\langle (\frac{\D}{dt}\otimes \frac{\D}{ds})[P_{\V^{(2)}} (1-\pathR)(d^1\phi)^\sharp], d\{\sigma\}_s\rangle_{\sigma_s}.
\end{equation}
\end{theorem}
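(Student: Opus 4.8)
The plan is to establish (\ref{eq:CO1}) first for smooth cylindrical one-forms, where it follows by combining Proposition \ref{prop:mnfd_CO_1a} with Lemma \ref{le:sought}, and then to extend it to all $\phi\in\Dom(d^1)$ by a closure argument, using that smooth cylindrical forms are a core for $d^1$ and that each operation on the right-hand side is continuous.

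For a smooth cylindrical one-form $\phi$ we have $\phi^\sharp\in\D^{2,1}\HH\subset\Dom(\mathfrak{D}^1)$, so Proposition \ref{prop:mnfd_CO_1a} applies and yields $CO(\phi)\in\D^{2,1}$ together with
\[
\frac{\D}{dt}[\nabla CO(\phi)-\phi^\sharp]_t
=2\int_t^T{}_{(2)}\!\langle\E[(\tfrac{\D}{dt}\otimes\tfrac{\D}{ds})(\mathfrak{D}^1\phi^\sharp)_{t,s}\,|\,\F_s^{x_0}],\,d\{\sigma\}_s\rangle_{\sigma_s}.
\]
Since the integration runs over $s\ge t$, on which $\F_s^{x_0}=\F_{s\lor t}^{x_0}$, the defining relation (\ref{eq:manifold_barD1_D1}) for $\overline{\mathfrak{D}^1}$ (after interchanging the labels $s$ and $t$) shows that the integrand equals $(\tfrac{\D}{dt}\otimes\tfrac{\D}{ds})(\overline{\mathfrak{D}^1}\phi)_{t,s}$. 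Inserting the identity $\overline{\mathfrak{D}^1}\phi=P_{\V^{(2)}}(1-\pathR)(d^1\phi)^\sharp$ of Lemma \ref{le:sought} then gives exactly (\ref{eq:CO1}) in the cylindrical case.

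To reach a general $\phi\in\Dom(d^1)$, I would pick smooth cylindrical one-forms $\phi_j$ with $\phi_j\to\phi$ in $L^2\Gamma\HH^*$ and $d^1\phi_j\to d^1\phi$ in $L^2\Gamma\HH^{(2)*}$; such a sequence exists because $d^1$ is by construction the closure of the geometric exterior derivative on cylindrical forms. The crucial point is that the right-hand side of (\ref{eq:CO1}) is continuous in $\phi$ for the graph norm of $d^1$. Writing $W:=P_{\V^{(2)}}(1-\pathR)(d^1\phi)^\sharp$, the fibrewise sharp map $L^2\Gamma\HH^{(2)*}\to L^2\Gamma\HH^{(2)}$ and the operator $1-\pathR:\HH^{(2)}\to\wedge^2\HH$ of (\ref{eq:duality}) are isometries, while $P_{\V^{(2)}}$ is a projection, so $W_j\to W$ in $L^2\Gamma(\wedge^2\HH)$. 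Because the integration runs over $s>t$ while the $\V^{(2)}_2$-component of $W$ is supported on $\{s<t\}$, the integrand involves only $P_{\V^{(2)}_1}W\in\V^{(2)}_1$, and by (\ref{eq:pnabla*}) the integral equals $\tfrac{\D}{dt}(\pnabla[*]P_{\V^{(2)}_1}W)$; the isometry property of $\V^{(2)}_1$ recorded in Remark \ref{re:isometry_subsp} then controls its $L^2$ norm by $2\|P_{\V^{(2)}_1}W\|\le 2\|d^1\phi\|$, so the right-hand sides converge in $L^2$. Since $\frac{\D}{dt}\phi_j^\sharp\to\frac{\D}{dt}\phi^\sharp$ as well, $\frac{\D}{dt}[\nabla CO(\phi_j)]_t$ converges in $L^2$; as $CO$ is continuous on $L^2\Gamma\HH^*$ we have $CO(\phi_j)\to CO(\phi)$, and closedness of $\nabla$ gives $CO(\phi)\in\D^{2,1}$ with $\nabla CO(\phi_j)\to\nabla CO(\phi)$. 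Passing to the limit in (\ref{eq:CO1}) for $\phi_j$ delivers the formula for $\phi$.

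The main obstacle is precisely this continuity of the right-hand map, namely controlling the Itô integral in the graph norm of $d^1$ rather than in the $L^2$ norm of $\phi$ alone; everything hinges on recognising the integral as $2\,\tfrac{\D}{dt}\pnabla[*]$ applied to the adapted two-tensor $P_{\V^{(2)}_1}W$ and then invoking both the isometry subspace property of $\V^{(2)}_1$ and the fact that $1-\pathR$ identifies $\HH^{(2)}$ isometrically with $\wedge^2\HH$. The remaining verifications — that $\phi^\sharp\in\Dom(\mathfrak{D}^1)$ for cylindrical $\phi$, that conditioning on $\F_s^{x_0}$ and on $\F_{s\lor t}^{x_0}$ agree on $\{s\ge t\}$, and that smooth cylindrical forms form a core for $d^1$ — are routine given the constructions recalled above.
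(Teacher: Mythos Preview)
Your proposal is correct and follows essentially the same approach as the paper: first combine Proposition~\ref{prop:mnfd_CO_1a} with Lemma~\ref{le:sought} for smooth cylindrical forms, then pass to the closure using density in $\Dom(d^1)$, continuity of $CO$, and closedness of $\nabla$. The paper compresses the extension step by pointing to the proof of Lemma~\ref{le:manifold_barD_closable}, whereas you spell out explicitly the $L^2$ control of the It\^o integral via Remark~\ref{re:isometry_subsp} and the isometry $1-\pathR$; this is the same mechanism, just made more visible.
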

\begin{proof}
By Lemma \ref {le:sought} and 
Proposition \ref{prop:mnfd_CO_1a}, the theorem holds for smooth cylindrical $\phi$. Since such $\phi$ are dense in the domain of $d^1$, we argue as in the proof of Lemma  \ref{le:manifold_barD_closable} to obtain the result for all $\phi\in\Dom(d^1)$.
\end{proof}
Lemma \ref{lem:nabla*} enables us to write (\ref{eq:CO1}) in a concise form.
\begin{corollary}
\label{co:manifold_CO_short}
Any $\phi\in \Dom(d^1)$ can be expressed as 
\begin{equation}
\label{eq:CO2}
\phi^\sharp
= \nabla CO(\phi) - 2\, \pnabla[*][P_{\V^{(2)}_1} (1-\pathR)(d^1\phi)^\sharp].
\end{equation}
\end{corollary}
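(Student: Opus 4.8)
The plan is to obtain (\ref{eq:CO2}) as an immediate consequence of Theorem~\ref{th:mnfd_CO_1} together with Lemma~\ref{lem:nabla*}; the whole content is to recognise the It\^o integral on the right of (\ref{eq:CO1}) as a codifferential. First I would set $U := P_{\V^{(2)}_1}(1-\pathR)(d^1\phi)^\sharp$ and check that it is legitimate input for Lemma~\ref{lem:nabla*}. By the duality (\ref{eq:duality}), $(1-\pathR)$ maps $\HH^{(2)}$ into $\wedge^2\HH$, so $(1-\pathR)(d^1\phi)^\sharp\in L^2\Gamma(\wedge^2\HH)\subset L^2\Gamma(\otimes^2\HH)$; the orthogonal projection $P_{\V^{(2)}_1}$ is therefore defined on it and produces, by construction, an element of $\V^{(2)}_1$, which is exactly the hypothesis of Lemma~\ref{lem:nabla*}.

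The key step is to replace the full projection $P_{\V^{(2)}}$ appearing in (\ref{eq:CO1}) by the partial projection $P_{\V^{(2)}_1}$. Because the stochastic integral in (\ref{eq:CO1}) runs over $s\in(t,T]$, only the $(t,s)$-component of the integrand with $s>t$ is ever seen. Writing $P_{\V^{(2)}}=P_{\V^{(2)}_1}+P_{\V^{(2)}_2}$ and reading off the defining indicators, on this range the $P_{\V^{(2)}_2}$-part carries the factor $\mathbf{1}_{[0,t)}(s)=0$ while the $P_{\V^{(2)}_1}$-part carries $\mathbf{1}_{[0,s)}(t)=1$, so
\[
(\frac{\D}{dt}\otimes\frac{\D}{ds})[P_{\V^{(2)}}(1-\pathR)(d^1\phi)^\sharp]_{t,s}=(\frac{\D}{dt}\otimes\frac{\D}{ds})U_{t,s},\qquad s>t.
\]
Hence the integrand of (\ref{eq:CO1}) is unchanged if $P_{\V^{(2)}}(1-\pathR)(d^1\phi)^\sharp$ is replaced by $U$.

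Now Lemma~\ref{lem:nabla*} applies to $U\in\V^{(2)}_1$ and identifies the right-hand side of (\ref{eq:CO1}) as
\[
2\int_t^T\!\!{}_{(2)}\!\langle (\frac{\D}{dt}\otimes\frac{\D}{ds})U_{t,s},d\{\sigma\}_s\rangle_{\sigma_s}=2\,\frac{\D}{dt}(\,\pnabla[*]U).
\]
Theorem~\ref{th:mnfd_CO_1} then reads $\frac{\D}{dt}[\nabla CO(\phi)-\phi^\sharp]_t=2\,\frac{\D}{dt}(\pnabla[*]U)_t$ for a.e.\ $t$. To finish I would cancel the operator $\frac{\D}{dt}$: as recalled in Section~\ref{sec:Ito map} it is an almost sure bijection $\HH\to L^2TC_{x_0}$ with inverse $\W$, and all of $\nabla CO(\phi)$, $\phi^\sharp$ and $\pnabla[*]U$ are genuine $H$-vector fields, so equality of their $\frac{\D}{dt}$-images forces $\nabla CO(\phi)-\phi^\sharp=2\,\pnabla[*]U$, which is (\ref{eq:CO2}). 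The one genuinely delicate point, and the only place care is needed, is the projection bookkeeping of the second paragraph: one must track the asymmetric roles of the two time indices and confirm that it is precisely the ``first index earlier'' piece $P_{\V^{(2)}_1}$ that survives on the range $s>t$ and matches the index order $(t,s)$ demanded by Lemma~\ref{lem:nabla*}; everything else is formal.
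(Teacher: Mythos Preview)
Your proposal is correct and follows exactly the route the paper takes: the paper's own proof is the single sentence ``Lemma~\ref{lem:nabla*} enables us to write (\ref{eq:CO1}) in a concise form,'' and you have simply unpacked what that sentence means, including the index bookkeeping showing that only the $P_{\V^{(2)}_1}$-piece survives on the integration range $s>t$ and the use of the bijectivity of $\frac{\D}{dt}$ to pass from (\ref{eq:CO1}) to (\ref{eq:CO2}).
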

\begin{corollary}
\label{co:manifold_don_e_closable}
With initial domain that of $d^1$, the operator from $L^2 \Gamma\HH^*$ to $L^2\Gamma(\wedge^2\HH)$ given by 
\[
\phi\mapsto P_{\V^{(2)}} \left(1-\pathR\right)(d^1\phi)^\sharp
\]
is closable. Taking its closure we have the following equality of closed densely defined operators
\begin{equation}
\label{eq:equal_ops}
\overline{\mathfrak{D}^1}=\left[\phi\mapsto P_{\V^{(2)}} \left(1-\pathR\right)(d^1\phi)^\sharp\right].
\end{equation}
\end{corollary}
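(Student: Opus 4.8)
The plan is to write $\Theta$ for the operator $\phi\mapsto P_{\V^{(2)}}(1-\pathR)(d^1\phi)^\sharp$ with initial domain $\Dom(d^1)$, and to prove the two operator inclusions $\Theta\subseteq\overline{\mathfrak{D}^1}$ and $\overline{\mathfrak{D}^1}\subseteq\overline{\Theta}$ separately. The first inclusion already forces $\Theta$ to be closable, since a densely defined operator contained in a closed operator is closable, so closability need not be argued on its own; and the two inclusions together give the claimed equality (\ref{eq:equal_ops}).

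First I would record that $\Theta$ is a bounded map composed after $d^1$: the musical map $(\cdot)^\sharp\colon L^2\Gamma\HH^{(2)*}\to L^2\Gamma\HH^{(2)}$ is an isometry, $1-\pathR\colon\HH^{(2)}\to\wedge^2\HH$ is a fibrewise isometry by (\ref{eq:duality}) --- being the inverse of the unitary $1+Q$ for the Riemannian structure placed on $\HH^{(2)}$ --- and $P_{\V^{(2)}}$ is an orthogonal projection. Hence $\|\Theta\phi\|_{L^2\Gamma(\wedge^2\HH)}\le\|d^1\phi\|_{L^2\Gamma\HH^{(2)*}}$, so the $\Theta$-graph norm is dominated by the $d^1$-graph norm. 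Since $d^1$ is by definition the closure of the geometric exterior derivative on smooth cylindrical one-forms, such forms are a core for $d^1$. Given $\phi\in\Dom(d^1)$, I would choose cylindrical $\phi_n\to\phi$ in the $d^1$-graph norm; then $\phi_n\to\phi$ and, by the boundedness just noted, $\Theta\phi_n\to\Theta\phi$ in $L^2$. By Lemma \ref{le:sought} we have $\overline{\mathfrak{D}^1}\phi_n=\Theta\phi_n$, and since $\overline{\mathfrak{D}^1}$ is closed (Lemma \ref{le:manifold_barD_closable}) we conclude $\phi\in\Dom(\overline{\mathfrak{D}^1})$ with $\overline{\mathfrak{D}^1}\phi=\Theta\phi$. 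This proves $\Theta\subseteq\overline{\mathfrak{D}^1}$, hence $\overline{\Theta}\subseteq\overline{\mathfrak{D}^1}$.

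For the reverse inclusion I would show that smooth cylindrical one-forms are a core for $\overline{\mathfrak{D}^1}$ as well. By Lemma \ref{le:manifold_barD_closable}, $\overline{\mathfrak{D}^1}$ is the closure on $\D^{2,1}\HH^*$ of $\phi\mapsto P_{\V^{(2)}}\mathfrak{D}^1\phi^\sharp$; since $\tau$ is isometric, $\|\mathfrak{D}^1\phi^\sharp\|\le\|\pnabla[\sharp]\phi^\sharp\|$, so the $\overline{\mathfrak{D}^1}$-graph norm is controlled on $\D^{2,1}\HH^*$ by the $\D^{2,1}$-norm of $\phi^\sharp$. As cylindrical one-forms are dense in $\D^{2,1}\HH^*$, they form a core for $\overline{\mathfrak{D}^1}$. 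Now I would take $\phi\in\Dom(\overline{\mathfrak{D}^1})$ and cylindrical $\phi_n\to\phi$ in the $\overline{\mathfrak{D}^1}$-graph norm; then $\phi_n\to\phi$ and $\overline{\Theta}\phi_n=\Theta\phi_n=\overline{\mathfrak{D}^1}\phi_n\to\overline{\mathfrak{D}^1}\phi$. Since each $\phi_n\in\Dom(d^1)=\Dom(\Theta)$ and $\overline{\Theta}$ is closed, we get $\phi\in\Dom(\overline{\Theta})$ with $\overline{\Theta}\phi=\overline{\mathfrak{D}^1}\phi$, i.e. $\overline{\mathfrak{D}^1}\subseteq\overline{\Theta}$. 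Combined with the first inclusion this yields $\overline{\Theta}=\overline{\mathfrak{D}^1}$.

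The step I expect to be the main obstacle is the pair of core verifications, and especially the one for $\overline{\mathfrak{D}^1}$: its initial domain $\D^{2,1}\HH^*$ is a priori unrelated to $\Dom(d^1)$, so one cannot compare the two graph norms directly and must route the estimate through the Malliavin--Sobolev norm via $\|\mathfrak{D}^1\phi^\sharp\|\le\|\pnabla[\sharp]\phi^\sharp\|$. A subsidiary point needing care is the genuine fibrewise-isometry status of $1-\pathR$ between the Hilbert fibres $\HH^{(2)}$ and $\wedge^2\HH$ --- as opposed to its merely $L^p$ behaviour on the ambient bundle $\wedge^2TC_{x_0}$ --- since this is exactly what makes the composite defining $\Theta$ bounded and lets both limiting arguments run. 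Should the core arguments prove delicate, an alternative route to closability is to argue directly as in Lemma \ref{le:manifold_barD_closable} using Theorem \ref{th:mnfd_CO_1}: if $\phi_j\to0$ with $\Theta\phi_j\to U$, noting $\Theta\phi_j\in\V^{(2)}_{skew}$ so $U\in\V^{(2)}_{skew}$, then $CO(\phi_j)\to0$ and closedness of $\nabla$ give $\nabla CO(\phi_j)\to0$, which forces the It\^o integral of $U$ in (\ref{eq:CO1}) to vanish, whence $U=0$ by the isometry of Remark \ref{re:isometry_subsp}; the closure is then identified through the common Clark--Ocone representation shared by (\ref{eq:CO1}) and Proposition \ref{prop:mnfd_CO_1a}.
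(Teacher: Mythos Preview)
Your proposal is correct and follows essentially the same route as the paper: both arguments hinge on Lemma~\ref{le:sought} together with the fact that smooth cylindrical one-forms are a core for each of $d^1$ and $\overline{\mathfrak{D}^1}$, which is exactly what the paper invokes (tersely) in its proof. Your explicit core verifications via the graph-norm estimates $\|\Theta\phi\|\le\|d^1\phi\|$ and $\|\overline{\mathfrak{D}^1}\phi\|\le\|\pnabla[\sharp]\phi^\sharp\|$ fill in what the paper leaves implicit.

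The one genuine methodological difference is closability. The paper obtains it by replaying the argument of Lemma~\ref{le:manifold_barD_closable} with equation~(\ref{eq:CO1}) in place of Proposition~\ref{prop:mnfd_CO_1a} --- precisely the alternative you sketch at the end. Your primary argument is cleaner: once $\Theta\subseteq\overline{\mathfrak{D}^1}$ is established, closability of $\Theta$ is automatic without ever touching Theorem~\ref{th:mnfd_CO_1}. This buys you logical independence from the Clark--Ocone formula for one-forms at this step, whereas the paper's route relies on having already proved Theorem~\ref{th:mnfd_CO_1}. Either way the identification of the closures then proceeds identically.
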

\begin{proof}
For the required closability, we apply the argument in the proof of Lemma \ref{le:manifold_barD_closable} to equation (\ref{eq:CO1}). We then use equation (\ref {eq:manifold_barD1_d1}) and the fact that smooth cylindrical forms are dense in the domains of both $d^1$ and $\overline{\mathfrak{D}^1}$.
\end{proof}
\begin{corollary}
\label{cor:mnfd_CO_bound}
If $\phi\in L^2\Gamma\HH^*$ is in $\Dom(d^1)$, we have 
\[
\|dCO(\phi)- \phi\|_{L^2\Gamma\HH^*} 
=\sqrt{2} \|\overline{\mathfrak{D}^1}\phi\|_{L^2\Gamma(\otimes^2 \HH)} 
\le \sqrt{2}\|d^1\phi\|_{L^2\Gamma(\HH^{(2)*})}. 
\]
In particular, if $\phi$ is also orthogonal to the image of $d$, then
\[
\|\phi\|_{L^2\Gamma\HH^*} 
\le \sqrt{2}\|d^1\phi\|_{L^2\Gamma(\HH^{(2)*})}.
\]
\end{corollary}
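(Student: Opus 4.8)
The plan is to read everything off the representation of $\phi^\sharp$ already established in Corollary~\ref{co:manifold_CO_short}, using the isometry-subspace property of $\V^{(2)}_1$ together with a skew-symmetry bookkeeping argument to pin down the constant $\sqrt2$. First I would note that $(dCO(\phi)-\phi)^\sharp=\nabla CO(\phi)-\phi^\sharp$, so that by Corollary~\ref{co:manifold_CO_short} this dual field equals $2\,\pnabla[*]\bigl[P_{\V^{(2)}_1}(1-\pathR)(d^1\phi)^\sharp\bigr]$. Writing $W:=(1-\pathR)(d^1\phi)^\sharp$, the field $P_{\V^{(2)}_1}W$ lies in $\V^{(2)}_1$, so the isometry-subspace identity of Remark~\ref{re:isometry_subsp} applies to $\pnabla[*]$ and gives at once $\|\nabla CO(\phi)-\phi^\sharp\|_{L^2\Gamma\HH}=2\|P_{\V^{(2)}_1}W\|_{L^2\Gamma(\otimes^2\HH)}$.

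The step I expect to require the only genuine care is converting $\|P_{\V^{(2)}_1}W\|$ into $\|\overline{\mathfrak{D}^1}\phi\|=\|P_{\V^{(2)}}W\|$. Since $(1-\pathR)$ carries $\HH^{(2)}$ into $\wedge^2\HH$, the field $W$ is skew, i.e.\ $\tau W=-W$. Invoking the identity $\tau P_{\V^{(2)}_1}=P_{\V^{(2)}_2}\tau$ from (\ref{eq:skew-symmetrisation}) I would deduce $P_{\V^{(2)}_2}W=-\tau P_{\V^{(2)}_1}W$, so that $P_{\V^{(2)}}W=P_{\V^{(2)}_1}W-\tau P_{\V^{(2)}_1}W$ is an orthogonal sum of a vector in $\V^{(2)}_1$ and a vector in $\V^{(2)}_2$; as $\tau$ is a fibrewise isometry, the Pythagorean identity yields $\|P_{\V^{(2)}}W\|^2=2\|P_{\V^{(2)}_1}W\|^2$. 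Combining this with the previous display produces the claimed equality $\|dCO(\phi)-\phi\|_{L^2\Gamma\HH^*}=\sqrt2\,\|\overline{\mathfrak{D}^1}\phi\|_{L^2\Gamma(\otimes^2\HH)}$.

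For the inequality I would use that $P_{\V^{(2)}}$ is an orthogonal projection, hence a contraction, together with the fact that the Riemannian structure on $\HH^{(2)}$ was defined precisely so that $1-\pathR\colon\HH^{(2)}\to\wedge^2\HH$ is a fibrewise isometry. Thus $\|\overline{\mathfrak{D}^1}\phi\|=\|P_{\V^{(2)}}W\|\le\|W\|_{L^2\Gamma(\wedge^2\HH)}=\|(d^1\phi)^\sharp\|_{L^2\Gamma\HH^{(2)}}=\|d^1\phi\|_{L^2\Gamma(\HH^{(2)*})}$, which completes the first line of the statement. Finally, for the ``in particular'' assertion I would observe that $CO(\phi)\in\D^{2,1}$, so $dCO(\phi)\in\image(d)$; if $\phi$ is orthogonal to $\image(d)$ then $\langle dCO(\phi),\phi\rangle_{L^2\Gamma\HH^*}=0$, and expanding $\|dCO(\phi)-\phi\|^2=\|dCO(\phi)\|^2+\|\phi\|^2\ge\|\phi\|^2$ gives $\|\phi\|_{L^2\Gamma\HH^*}\le\|dCO(\phi)-\phi\|_{L^2\Gamma\HH^*}$, which together with the equality and inequality already proved yields the desired bound.
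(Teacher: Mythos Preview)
Your argument is correct and follows essentially the same route as the paper: the paper appeals directly to the It\^o-isometry computation of Remark~\ref{re:isometry_subsp} applied to the representation (\ref{eq:CO1}), together with the identification (\ref{eq:manifold_barD1_d1}) and the fact that $1-\pathR$ is a fibrewise isometry. Your version simply makes explicit the skew-symmetry bookkeeping (via (\ref{eq:skew-symmetrisation}) and the orthogonality of $\V^{(2)}_1$ and $\V^{(2)}_2$) that produces the factor $\sqrt2$, which the paper leaves implicit in the passage from the half-plane integral in (\ref{eq:CO1}) to the full norm of $\overline{\mathfrak{D}^1}\phi$.
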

\begin {proof}
The results are clear from (\ref{eq:CO1}) and (\ref{eq:manifold_barD1_d1}), using the It\^o isometry as in Remark \ref{re:isometry_subsp} and the fact that 
$1-\pathR$ gives an isometry from $\HH^{(2)}$ to $\wedge^2\HH$.
\end{proof}
We also obtain a Clark-Ocone type formula for co-closed one-forms.  
Recall first the projection operators $P_{\V^{(2)}_1}$ and $P_{\V^{(2)}_2}$, which are related by (\ref{eq:skew-symmetrisation}). 
\begin{corollary}
\label{cor:mnfd_co-closed}
If $\phi\in L^2\Gamma\HH^*$ is in  $\ker(d^*)$, we have 
$\phi = d^{1*}w$, where 
\[
w^\sharp=(1+Q)( P_{\V^{(2)}_2}\tau - P_{\V^{(2)}_1})
\pnabla[\sharp]\phi^{\sharp}. 
\]
\end{corollary}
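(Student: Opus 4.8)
The plan is to verify the formula directly, checking that $d^{1*}w=\phi$ against a core of smooth cylindrical one-forms; the existence of \emph{some} primitive is already clear, since $\phi\in\ker(d^*)$ is orthogonal to $\image(d)$ and hence lies in $\image(d^{1*})$ by the decomposition (\ref{eq: path_Hodge1}), so the content is the explicit identification. I would work under the standing assumption $\phi^\sharp\in\D^{2,1}\HH$ (needed even to make sense of the formula) and remove it at the end by the closedness of the operators involved. Write $\beta:=P_{\V^{(2)}_1}\pnabla[\sharp]\phi^\sharp\in\V^{(2)}_1$; using (\ref{eq:skew-symmetrisation}) in the form $P_{\V^{(2)}_2}\tau=\tau P_{\V^{(2)}_1}$, the bracket defining $w$ becomes $B:=(P_{\V^{(2)}_2}\tau-P_{\V^{(2)}_1})\pnabla[\sharp]\phi^\sharp=(\tau-\id)\beta$, which is skew and adapted, so $B\in\V_{skew}^{(2)}$ and $w^\sharp=(1+Q)B$.

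First I would convert the co-differential into $\pnabla[*]$. Pairing $d^{1*}w$ with a smooth cylindrical $\psi$ and invoking the isometry $(1-\pathR)=(1+Q)^*$ of (\ref{eq:duality}) together with $(1-\pathR)w^\sharp=B$, the identity (\ref{eq:manifold_barD1_d1}), and the fact that $B\in\V^{(2)}$ is fixed by $P_{\V^{(2)}}$, one gets $\langle w,d^1\psi\rangle=\langle B,\mathfrak{D}^1\psi^\sharp\rangle=\langle\mathfrak{D}^{1*}B,\psi^\sharp\rangle$. Since $B$ is skew, Lemma \ref{lem:mnfd_closability} gives $\mathfrak{D}^{1*}B=-\pnabla[*]B$, so the whole corollary reduces to the single vector-field identity $-\pnabla[*]B=\phi^\sharp$, i.e. $-\pnabla[*]\tau\beta+\pnabla[*]\beta=\phi^\sharp$.

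The summand $\pnabla[*]\beta$ is easy: as $\beta\in\V^{(2)}_1$, Lemma \ref{lem:nabla*} expresses $\frac{\D}{dt}(\pnabla[*]\beta)_t$ as an It\^o integral which is precisely the right-hand side of (\ref{eq:manifold_phisharp_con_diff}), so $\pnabla[*]\beta=\phi^\sharp-P_\V\phi^\sharp$. The hard part will be $\pnabla[*]\tau\beta$, since $\tau\beta\in\V^{(2)}_2$ falls outside the scope of Lemma \ref{lem:nabla*}. I would compute it as a bounded functional: for $V\in\D^{2,1}\HH$, using $P_{\V^{(2)}_2}\pnabla[\sharp]=\pnabla[\sharp]P_\V$ (the identity recorded after Proposition \ref{prop:mnfd_NP}) to pass to the adapted field $V':=P_\V V$,
\[
\langle\tau\beta,\pnabla[\sharp]V\rangle=\langle\tau\pnabla[\sharp]\phi^\sharp,\pnabla[\sharp]V'\rangle=\langle\pnabla[\sharp]\phi^\sharp,\tau\pnabla[\sharp]V'\rangle=\langle\phi^\sharp,\pnabla[*]\tau\pnabla[\sharp]V'\rangle,
\]
the last equality being legitimate because for adapted $V'$ one has $\pnabla[\sharp]V'\in\V^{(2)}_2$, whence $\tau\pnabla[\sharp]V'\in\V^{(2)}_1$ and Lemma \ref{lem:nabla*} applies once more.

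The crux is then to evaluate $\pnabla[*]\tau\pnabla[\sharp]V'$, and this is where Lemma \ref{lem:commutation} and co-closedness enter. The commutation formula (\ref{eq:mnfd_commutation}) for the adapted field $V'$, once its It\^o-integral term is recognised (via Lemma \ref{lem:nabla*}) as $(\pnabla[*]\tau\pnabla[\sharp]V')^\sharp$, reads
\[
\pnabla[*]\tau\pnabla[\sharp]V'=-\nabla(\Div V')-V'.
\]
Substituting gives $\langle\tau\beta,\pnabla[\sharp]V\rangle=-\langle\phi^\sharp,V'\rangle-\langle\phi,d(\Div V')\rangle$, and the second term vanishes because $\phi\in\ker(d^*)$ is orthogonal to the exact form $d(\Div V')$; since $\langle\phi^\sharp,V'\rangle=\langle P_\V\phi^\sharp,V\rangle$, the functional is bounded and $\pnabla[*]\tau\beta=-P_\V\phi^\sharp$. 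Combining the two summands yields $-\pnabla[*]B=P_\V\phi^\sharp+(\phi^\sharp-P_\V\phi^\sharp)=\phi^\sharp$, so $d^{1*}w=\phi$ on the cylindrical core and hence everywhere, the standing regularity assumption being removed at the end by approximation and the continuity of the maps involved. I expect the identification of the It\^o-integral term of (\ref{eq:mnfd_commutation}) with $(\pnabla[*]\tau\pnabla[\sharp]V')^\sharp$, and the bookkeeping of which tensor index is adapted, to be the only genuinely delicate points.
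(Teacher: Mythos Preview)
Your argument is correct, but it takes a different and considerably longer route than the paper's. The paper's proof is a short duality computation: for any $\psi\in\Dom(d^1)$ one pairs $\phi^\sharp$ directly against the Clark--Ocone formula of Corollary~\ref{co:manifold_CO_short}, $\psi^\sharp=\nabla CO(\psi)-2\,\pnabla[*][P_{\V^{(2)}_1}(1-\pathR)(d^1\psi)^\sharp]$, uses $\phi\in\ker(d^*)$ to kill the first term, moves $\pnabla[*]$ to $\pnabla[\sharp]\phi^\sharp$, skew-symmetrises via (\ref{eq:skew-symmetrisation}), and reads off $w$ through the isometry $(1-\pathR)=(1+Q)^*$. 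Everything is packaged inside the already-established one-form Clark--Ocone formula, so the proof is essentially five lines.

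Your approach instead unpacks that package: you compute $-\pnabla[*]B$ from first principles, splitting into $\pnabla[*]\beta$ (handled by Lemma~\ref{lem:nabla*} and (\ref{eq:manifold_phisharp_con_diff})) and $\pnabla[*]\tau\beta$ (handled by recognising the It\^o integral in (\ref{eq:mnfd_commutation}) as $\pnabla[*]\tau\pnabla[\sharp]V'$ and invoking co-closedness). The identification you flag as delicate is indeed the same computation carried out in the proof of Proposition~\ref{prop:mnfd_CO_1a}, so it goes through. What your route buys is a more explicit picture of \emph{why} the particular combination $(P_{\V^{(2)}_2}\tau-P_{\V^{(2)}_1})\pnabla[\sharp]\phi^\sharp$ arises, and it avoids quoting the full one-form Clark--Ocone formula; what the paper's route buys is brevity, since Corollary~\ref{co:manifold_CO_short} has already done this work. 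Note finally that the regularity hypothesis $\phi^\sharp\in\D^{2,1}\HH$ cannot actually be removed at the end, since the formula for $w$ involves $\pnabla[\sharp]\phi^\sharp$; the paper's statement carries the same implicit assumption.
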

\begin {proof}
Since $\phi\in \ker(d^*)$ means $\phi^\sharp\in \ker(\div)$, we see for any $\psi\in\Dom(d^1)$ that
\[
0 = \E [-(\div \phi^\sharp)CO(\psi)] = \E \langle\phi^\sharp, \nabla CO(\psi)\rangle_\HH.
\]
Applying Corollary \ref{co:manifold_CO_short} to $\psi\in\Dom(d^1)$, we see  
\begin{eqnarray*}
\E\langle\phi^\sharp, \psi^\sharp\rangle_\HH 
&=&\E\langle\phi^\sharp, \psi^\sharp - \nabla CO(\psi)\rangle_\HH\\
&=& \E\langle\phi^\sharp, - 2\, \pnabla[*][P_{\V^{(2)}_1} (1-\pathR)(d^1\phi)^\sharp]\rangle_\HH\\
&=& \E  \langle - 2\,\pnabla[\sharp] \phi^\sharp, P_{\V^{(2)}_1} (1-\pathR)(d^1\psi)^\sharp\rangle_{\HH}\\
&=& \E \langle \,( P_{\V^{(2)}_2}\tau - P_{\V^{(2)}_1} )\pnabla[\sharp] \phi^\sharp, (1-\pathR)(d^1\psi)^\sharp\rangle_{\wedge^2\HH}\\
&=& \E \langle d^{1*\sharp}(1+Q) (P_{\V^{(2)}_2}\tau - P_{\V^{(2)}_1} )\pnabla[\sharp] \phi^\sharp, \psi^\sharp\rangle_{\HH}, 
\end{eqnarray*}
where we used (\ref{eq:skew-symmetrisation}) in the skew-symmetrisation in the fourth line.
\end{proof}
We now have our main result.
\begin{theorem}
For any $L^2$ $H$-one-form $\phi$ on $C_{x_0}$, there is an $f\in \D^{2,1}$ with $\phi=df$ if and only if $d^1\phi=0$.  
Moreover, the images of $d^1$ and $d^{1*}$ are closed, and we have the following Hodge decomposition for $\phi\in L^2\Gamma\HH^*$:
\begin{equation}
\label{eq:Hodge_decomp}
\phi=df+d^{1*}\psi,
\end{equation}
for some $f\in \D^{2,1}$ and $\psi\in \Dom(d^{1*})$.
\end{theorem}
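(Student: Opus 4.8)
The plan is to assemble the ingredients already in place, with Corollary \ref{cor:mnfd_CO_bound} doing the essential work. The forward implication of the equivalence is immediate: since $d^1 d = 0$, any $\phi = df$ with $f\in\D^{2,1}$ satisfies $d^1\phi = d^1 df = 0$. For the converse, suppose $d^1\phi = 0$. By the first part of Corollary \ref{cor:mnfd_CO_bound} we then have $\|dCO(\phi) - \phi\|_{L^2\Gamma\HH^*}\le\sqrt{2}\,\|d^1\phi\|_{L^2\Gamma(\HH^{(2)*})} = 0$, so $\phi = dCO(\phi)$; since $CO(\phi)\in\D^{2,1}$ by Theorem \ref{th:mnfd_CO_1}, we may take $f = CO(\phi)$.

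Next I would deduce the vanishing of the harmonic one-forms. If $\phi\in\mathbf{H}^1(C_{x_0})$, then $\phi\in\ker(d^1)$ and $\phi\in\ker(d^*)$, so in particular $\phi$ is orthogonal to $\image(d)$. The second inequality of Corollary \ref{cor:mnfd_CO_bound} then yields $\|\phi\|_{L^2\Gamma\HH^*}\le\sqrt{2}\,\|d^1\phi\|_{L^2\Gamma(\HH^{(2)*})} = 0$, whence $\phi = 0$ and $\mathbf{H}^1(C_{x_0}) = \{0\}$. Discarding this trivial summand in the Hodge decomposition (\ref{eq:path_Hodge1a}), and recalling that $\image(d)$ is closed by Fang's Clark-Ocone formula, we obtain the identification $\ker(d^1) = \image(d)$.

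The substantive point is the closedness of the ranges. I would read the second inequality of Corollary \ref{cor:mnfd_CO_bound} as a coercivity (Poincar\'e) estimate for $d^1$: for every $\phi\in\Dom(d^1)$ orthogonal to $\image(d) = \ker(d^1)$ one has $\|\phi\|_{L^2\Gamma\HH^*}\le\sqrt{2}\,\|d^1\phi\|_{L^2\Gamma(\HH^{(2)*})}$. By the standard characterisation, a closed densely defined operator whose restriction to the orthogonal complement of its kernel is bounded below has closed range; hence $\image(d^1)$ is closed. The closed range theorem then gives that $\image(d^{1*})$ is closed as well, and this same lower bound is precisely the spectral-gap estimate for the Hodge Laplacian on co-exact one-forms.

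Finally, since $\image(d^{1*})$ is closed we have $\overline{\image(d^{1*})} = \image(d^{1*})$, so with $\mathbf{H}^1(C_{x_0}) = \{0\}$ the decomposition (\ref{eq:path_Hodge1a}) collapses to
\[
L^2\Gamma\HH^* = \image(d)\oplus\image(d^{1*}),
\]
which is precisely the asserted splitting $\phi = df + d^{1*}\psi$ with $f\in\D^{2,1}$ and $\psi\in\Dom(d^{1*})$. The main obstacle is the functional-analytic passage from the coercivity estimate to a closed range and its transfer to the adjoint operator, where one must keep careful track of the domains and of the identification $\ker(d^1) = \image(d)$; the remaining steps amount to bookkeeping with the Clark-Ocone identities already proved.
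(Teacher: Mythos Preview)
Your proposal is correct and follows essentially the same route as the paper: the equivalence comes from Theorem~\ref{th:mnfd_CO_1} (via Corollary~\ref{cor:mnfd_CO_bound}), the coercivity estimate of Corollary~\ref{cor:mnfd_CO_bound} on $\ker(d^1)^\perp$ gives closedness of $\image(d^1)$ and hence of $\image(d^{1*})$, and the known Hodge decomposition (\ref{eq:path_Hodge1a}) then collapses to (\ref{eq:Hodge_decomp}). The only cosmetic difference is that the paper deduces the vanishing of harmonic forms directly from the first assertion rather than by a second appeal to Corollary~\ref{cor:mnfd_CO_bound}.
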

\begin{proof}
The first assertion is immediate from Theorem \ref{th:mnfd_CO_1}.
By Corollary \ref{cor:mnfd_CO_bound}, 
$\|\phi\|_{L^2\Gamma\HH^*} \le \sqrt{2}\|d^1\phi\|_{L^2\Gamma(\HH^{(2)*})}$  for all $\phi\in \Dom(d^1|_{\ker(d^1)^\perp})$. Since  $d^1|_{\ker(d^1)^\perp}$ is closed, the injective operator $d^1|_{\ker(d^1)^\perp}$ has closed range. 
From this it follows automatically that $d^{1*}$ has closed range.
\par
Vanishing of harmonic $H$-one-forms follows from the first assertion, and the Hodge decomposition given in \cite{elworthy2000special, elworthy2008l2} now has the form (\ref{eq:Hodge_decomp}).
\end{proof}
Different conventions for the definition of the inner products on $\wedge^2H$ lead to the corresponding adjoints  $d^*$ and $d^{1*}$ differing by constant multiples. This has a knock-on effect for the operator $d^{1*}d^1+dd^*$.  With our conventions in finite dimensions, this operator would not have the usual Weitzenb\"ock formula; that formula would apply to the operator $2d^{1*}d^1+dd^*$. It would, therefore, be reasonable to consider the latter as the Hodge-Kodaira Laplacian. The kernels of the two operators are the same  since both consist of forms satisfying $d^*\phi=0$ and $d^1\phi=0$, but the two operators can be expected to have different spectrums.  In addition, 
\begin{corollary}
Each of the ``Hodge-Kodaira Laplacians",  $d^{1*}d^1+dd^*$ and $2d^{1*}d^1+dd^*$,  has closed range and a spectral gap.
\end{corollary}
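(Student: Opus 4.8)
The plan is to exploit the orthogonal Hodge decomposition $L^2\Gamma\HH^*=\image(d)\oplus\image(d^{1*})$ furnished by the preceding theorem, in which both summands are closed and the harmonic part is absent, and to bound each Laplacian below by a positive constant on each summand. A nonnegative self-adjoint operator that is bounded below by some $c>0$ is boundedly invertible, hence has closed range, and its spectrum lies in $[c,\infty)$; since the kernel, being the space of harmonic one-forms, is trivial, this is precisely the asserted spectral gap.

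First I would show that each operator is reduced by the decomposition and acts diagonally on it. Because $d$ and $d^1$ are closed with closed ranges, we have $\image(d)^\perp=\ker(d^*)$ and $\image(d^{1*})^\perp=\ker(d^1)$, so $\image(d)\subseteq\ker(d^1)$ and $\image(d^{1*})\subseteq\ker(d^*)$. Hence on $\image(d)$ the summand $d^{1*}d^1$ vanishes while on $\image(d^{1*})$ the summand $dd^*$ vanishes. Writing each Laplacian as the orthogonal direct sum of $dd^*$ on $\image(d)$ with $d^{1*}d^1$ (respectively $2d^{1*}d^1$) on $\image(d^{1*})$ makes it manifestly self-adjoint and reduces the problem to a lower bound on each piece.

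On $\image(d^{1*})$ the lower bound is immediate from Corollary \ref{cor:mnfd_CO_bound}: every such $\phi$ lies in $\ker(d^*)$ and is orthogonal to $\image(d)$, so $\|\phi\|^2\le 2\|d^1\phi\|^2=2\langle d^{1*}d^1\phi,\phi\rangle$, giving $d^{1*}d^1\ge\frac12$ there. On $\image(d)$ I would reduce the gap of $dd^*$ to a scalar Poincaré inequality: taking $T=d$ acting on functions, the nonzero spectra of $T^*T=d^*d$ and $TT^*=dd^*$ coincide, so the spectrum of $dd^*$ on the reducing subspace $\image(d)$ is contained in the nonzero spectrum of the scalar operator $d^*d$, and it suffices to bound the bottom of the latter. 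That bound is furnished directly by the Clark-Ocone formula (\ref{eq:mnfd_CO_0}) together with the It\^o isometry: for $f\in\D^{2,1}$ with $\E f=0$,
\[
\|f\|_{L^2}^2=\int_0^T\E\big|\E[\tfrac{\D}{dt}(\nabla f)_t\,|\,\F^{x_0}_t]\big|^2_{\sigma_t}\,dt\le\int_0^T\E\big|\tfrac{\D}{dt}(\nabla f)_t\big|^2_{\sigma_t}\,dt=\|df\|_{L^2\Gamma\HH^*}^2,
\]
the inequality being that conditional expectation is an $L^2$ contraction. Thus $d^*d\ge1$ on the orthogonal complement of the constants, whence $dd^*\ge1$ on $\image(d)$.

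Combining the two pieces gives $d^{1*}d^1+dd^*\ge\frac12$ and $2d^{1*}d^1+dd^*\ge1$ on all of $L^2\Gamma\HH^*$, so both operators are boundedly invertible with closed range and a spectral gap. The main obstacle I anticipate is functional-analytic rather than probabilistic: one must verify that each Laplacian is genuinely self-adjoint on its natural domain and is reduced by the Hodge decomposition at the level of operator domains rather than merely on a core, and that the identity relating the nonzero spectra of $d^*d$ and $dd^*$ is applied to the correct closed realizations of $d$. The analytic heart—the two Poincar\'e inequalities above—is already supplied, so what remains is this domain bookkeeping.
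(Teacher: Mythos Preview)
Your proposal is correct and rests on the same core fact the paper invokes: the spectral gap for the Laplacians is a consequence of the closed ranges of $d$ and $d^1$, which in turn come from the two Poincar\'e-type inequalities already established (Fang's Clark--Ocone formula for functions, and Corollary~\ref{cor:mnfd_CO_bound} for one-forms orthogonal to exact forms). The paper's own proof is a one-line appeal to this abstract Hilbert-space principle, citing Zucker and Donnelly; you instead unpack the argument explicitly by reducing each Laplacian along the Hodge decomposition and bounding each block from below, which has the added benefit of producing concrete constants ($dd^*\ge 1$ on $\image(d)$, $d^{1*}d^1\ge\frac12$ on $\image(d^{1*})$). The domain bookkeeping you flag---that the Laplacian defined as the Friedrichs/form closure of $d^{1*}d^1+dd^*$ is genuinely reduced by the decomposition, and that the nonzero spectra of $d^*d$ and $dd^*$ agree for the closed $d$---is standard and is precisely what the cited references supply, so your concern is well placed but not an obstruction.
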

\begin{proof}
This is a well-known consequence of the fact that $d$ and $d^1$ have closed range. For example see Zucker \cite{zucker1979hodge} 
or Donnelly \cite{donnelly1981differential}.
\end{proof}
\begin{remark}
Theorem \ref{th:mnfd_CO_1} gives a  decomposition of $\Dom(d^1)$ into the sum of the image of $d$ (equivalently, the kernel of $d^1$) and forms $\psi$ in the domain of $d^1$ such that $\psi^\sharp\in \V^{\perp}$, the subspace  of  $L^2\Gamma \HH$ orthogonal to the  progressively measurable vector fields. 
This decomposition is unique, since if $\phi=df$ for some $f\in \D^{2,1}$ and $\phi^\sharp\in \V^{\perp}$, by the Clark-Ocone formula for functions we see $f$ must be constant, and so $\phi=0$.
\par
However, this decomposition does \emph{not} extend over all of $L^2\Gamma \HH^*$, since the projection on the first component, $\phi\mapsto d CO(\phi)$, does not extend continuously over $L^2\Gamma \HH^*$. This can be seen by taking a sequence of functions in $\D^{2,1}$ converging in $L^2$ to a function not in $\D^{2,1}$ and considering their  integral representations. 
Therefore, the map $\phi\mapsto P_{\V^{(2)}_1} (1-\pathR)(d^1\phi)^\sharp$ 
does not extend continuously over all of $L^2$, unlike the map of functions $f\mapsto P_\V \nabla f$. However, it does extend continuously over $\V^{\perp}$. 
\par
Such decompositions for higher order forms on the classical Wiener space, and ``dual"  decompositions, are given by Yang in \cite{yang2011thesis, yang2011generalised}. As pointed out in \cite{yang2011generalised}, these also determine decompositions for forms on based path spaces over compact Lie groups by the earlier results of Fang and Franchi \cite{fang1997differentiable}, when flat invariant connections are used to define the Bismut tangent spaces.
\end{remark}
\section{The pullback property of CO}
For completeness, we include the following result from \cite{yang2011thesis} concerning the pullback of  $\phi\mapsto CO(\phi)$ under It\^o maps.  An analogous result using the maps of path spaces induced by the projections  $p:K\to K/G$ of  Riemannian symmetric spaces is given in \cite{elworthy2010stochastic}. 
We use the set-up of Section \ref{sec:Ito map}, and in particular the It\^o map $\I:C_0\to C_{x_0}$ of an SDE on $M$ that induces the Levi-Civita connection of $M$. The pullback of $H$-one-forms by such an It\^o map is known to exist \cite{elworthy2007ito}.
\begin{theorem}
For any  $L^2$ $H$-one-form $\phi$ on $C_{x_0}$, we have almost surely
\begin{equation}
\label{eq:pullback}
CO(\I^*\phi)= CO(\phi)\circ \I.
\end{equation}
\end{theorem}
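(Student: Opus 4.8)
The plan is to identify each side of (\ref{eq:pullback}) as a mean-zero $L^2$ function on $C_0$ and to match their It\^o (Clark--Ocone) integrands, thereby reducing everything to a single statement about how adaptedness transforms under $\I$. First I would reduce to a convenient dense class. Both $\phi\mapsto CO(\phi)$ (from $L^2\Gamma\HH^*$ to $L^2(C_{x_0};\R)$) and $f\mapsto f\circ\I$ (an isometry, by $\I_*\gamma=\mu_{x_0}$) are continuous, and the pullback $\I^*$ is bounded from $L^2\Gamma\HH^*$ to $L^2(C_0;H)$, being the adjoint of the bounded push-forward $\overline{T\I(-)}$; hence $\phi\mapsto CO(\I^*\phi)$ is continuous as well. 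Since smooth cylindrical one-forms are dense in $L^2\Gamma\HH^*$, it suffices to prove (\ref{eq:pullback}) for $\phi$ with $\phi^\sharp\in\D^{2,1}\HH$.

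For such $\phi$, set $G:=CO(\phi)\circ\I$. By (\ref{fact1}) we have $G\in\D^{2,1}(C_0;\R)$, and $\E G=\E_{C_{x_0}}CO(\phi)=0$ since $CO(\phi)=-\Div P_\V\phi^\sharp$ has vanishing mean. Write $\V_0$ for the adapted subspace of $L^2(C_0;H)$ and $P_{\V_0}$ for the corresponding projection. I would use the characterisation of $CO$: for $\psi$ with $\psi^\sharp\in\D^{2,1}$, the function $CO(\psi)=-\Div P_\V\psi^\sharp$ is the unique mean-zero element whose It\^o integrand is the adapted field $P_\V\psi^\sharp$; equivalently, applying the Clark--Ocone formula to $CO(\psi)$ itself and invoking the injectivity of the It\^o integral on adapted fields, one obtains $P_\V\nabla CO(\psi)=P_\V\psi^\sharp$. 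The same holds on $C_0$ with $\V_0$. Consequently $CO(\I^*\phi)$ is the unique mean-zero $L^2$ function whose It\^o integrand is the density of $P_{\V_0}(\I^*\phi)^\sharp$, while the Clark--Ocone representation of $G$ has integrand the density of $P_{\V_0}\nabla G$. Thus (\ref{eq:pullback}) will follow once I show $P_{\V_0}\nabla G=P_{\V_0}(\I^*\phi)^\sharp$.

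Since pullback commutes with exterior differentiation of functions, $\nabla G=\nabla(CO(\phi)\circ\I)=(\I^*d\,CO(\phi))^\sharp$, so with $\eta:=\phi-d\,CO(\phi)$ the desired identity becomes $P_{\V_0}(\I^*\eta)^\sharp=0$. On $C_{x_0}$ the characterisation above gives $P_\V\nabla CO(\phi)=P_\V\phi^\sharp$, i.e. $\eta^\sharp\in\V^\perp$. The key lemma I would then prove is that pullback carries $\V^\perp$ into the orthogonal complement of $\V_0$: for any adapted $V\in\V_0$,
\[
\E_{C_0}\langle(\I^*\eta)^\sharp,V\rangle_H=\E_{C_0}[(\I^*\eta)(V)]=\E_{C_{x_0}}\langle\eta^\sharp,\overline{T\I(V)}\rangle_{\HH},
\]
where the second equality is the adjoint relation between $\I^*$ and $\overline{T\I(-)}$ obtained by conditioning on $\I$. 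Because $\overline{T\I(V)}$ is adapted whenever $V$ is --- by (\ref{eq:barTI}), (\ref{eq:2TI}) and Lemma 9.2 of \cite{elworthy2008l2} its $\frac{\D}{dt}$-density is $\F^{x_0}_t$-measurable --- and $\eta^\sharp\perp\V$, this expectation vanishes; hence $(\I^*\eta)^\sharp\perp\V_0$, which is exactly $P_{\V_0}(\I^*\eta)^\sharp=0$.

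I expect the main obstacle to be the rigorous handling of the pullback itself. Pointwise $T_w\I(h)$ need not lie in $\HH_{\I(w)}$, since its covariant derivative carries a martingale part, so both the expression $(\I^*\eta)(V)$ and the adjoint relation above must be justified through the definition of $\I^*$ adopted in \cite{elworthy2007ito} together with the conditioning identity $\E_{C_0}[\eta_\I(T\I V)]=\E_{C_{x_0}}[\eta(\overline{T\I(V)})]$; it is enough to establish this in the $L^2$-pairing sense, since only the adapted projection $P_{\V_0}$ enters. The remaining ingredients --- boundedness of $\I^*$, preservation of adaptedness under push-forward, and the $CO$-characterisation via the Clark--Ocone formula --- are routine consequences of the results recalled in Section \ref{sec:Ito map} and above.
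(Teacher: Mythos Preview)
Your argument is correct and rests on the same two facts the paper uses: the adjoint relation $\E_{C_0}\langle(\I^*\phi)^\sharp,V\rangle_H=\E_{C_{x_0}}\phi(\overline{T\I(V)})$ (Corollary~3.7 of \cite{elworthy2007ito}) and the observation that $\overline{T\I(V)}$ is $\{\F^{x_0}_t\}$-adapted whenever $V$ is $\{\F_t\}$-adapted. The organisation, however, differs. The paper does not pass through $\nabla CO(\phi)$ or a density reduction; it writes each side of (\ref{eq:pullback}) explicitly as an It\^o integral, tests against $\int_0^T\langle\dot a_t,dB_t\rangle$ for adapted bounded $a$, and shows by the It\^o isometry that both pairings equal $\int_{C_{x_0}}\phi_\sigma\big(\overline{T\I_\sigma}[\E(a\,|\,\I=\sigma)]\big)\,d\mu_{x_0}$. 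Your route instead isolates the clean structural lemma $\I^*[\V^\perp]\subset\V_0^\perp$, which is attractive and potentially reusable, at the cost of needing $CO(\phi)\in\D^{2,1}$ (hence the restriction to $\phi^\sharp\in\D^{2,1}\HH$ and the appeal to continuity). The paper's direct computation avoids that detour and works uniformly for all $\phi\in L^2\Gamma\HH^*$.
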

\begin{proof}
As both sides of (\ref{eq:pullback}) have zero expectation, it suffices to test against all functions of the form $\int_0^T\langle\dot{a}_t, dB_t\rangle_{\R^m}$, for an adapted and bounded $H$-vector field $a$ on $C_0$. As before,  $\{B_t\}_{0\le t\le T}$ denotes the canonical Brownian motion on $\R^m$. Recall that we let $\{\sigma_t\}_{0\le t\le T}$ denote both the canonical process on  $M$ and a generic element of $C_{x_0}$, so $x_t:=\sigma_t\circ \I=\I_t$
is the solution of our SDE  (\ref{eq:SDE}).  Also $\F^{x_0}_.$ refers to the natural filtration on $C_{x_0}$ and $\F^{\I}_.$ to that generated by $\I$ on $C_0$.
\par
Let $\phi$ be an $L^2$ $H$-one-form on $C_{x_0}$. We wish to show 
\begin{equation}
\label{eq:pullback2} 
\left[\int_0^T\left\langle \E(\frac{\D}{dt}\phi^{\sharp}_t|\F_t^{x_0}), d\{\sigma\}_t\right\rangle_{\sigma_t}\right]\circ \I 
= \int_0^T\left\langle  \E[\frac{d}{dt}(\I^*\phi)^{\sharp}_t|\F_t], dB_t\right\rangle_{\R^m}.
\end{equation}
Testing the left hand side of equation (\ref{eq:pullback2}), we get
\begin{eqnarray*}
& &
\int_{C_0} \int_0^T\langle  \dot{a}_t, dB_t\rangle_{\R^m}\left[\int_0^T\left\langle  \E(\frac{\D}{dt}\phi^{\sharp}_t|\F_t^{x_0}), d\{\sigma\}_t\right\rangle_{\sigma_t}\right]\circ \I\,d\gamma \\
&=&
\int_{C_0}\int_0^T\langle  \dot{a}_t,  dB_t \rangle_{\R^m}\int_0^T\left\langle  \E(\frac{\D}{dt}\phi^{\sharp}_t\circ \I|\F_t^\I), X(x_t)dB_t\right\rangle_{\R^m} d\gamma \\
&=&
\int_{C_0}\int_0^T\left\langle \E(\dot{a}_t|\F_t^\I), Y_{x_t}\E(\frac{\D}{dt}\phi^{\sharp}_t\circ \I|\F_t^\I)\right\rangle_{\R^m}dt\,d\gamma\\
&=&
\int_{C_{x_0}}\int_0^T\left\langle   X(\sigma_t)\E(\dot{a}_t|\I_t=\sigma_t), \frac{\D}{dt}\phi^{\sharp}_t(\sigma)\right\rangle_{\sigma_t} dt\,d\mu_{x_0}(\sigma)\\
&=&
\int_{C_{x_0}}\phi_\sigma\left(\overline{T\I_\sigma}[\E({a}_.|\I=\sigma)]\right)d\mu_{x_0}(\sigma),
\end{eqnarray*}
where we used the It\^o isometry in the third line, and 
equation (\ref{eq:barTI}) in the last.  
Applying the It\^o isometry again, we obtain from the right hand side
\begin{eqnarray*}
& &
\int_{C_0}\int_0^T\langle  \dot{a}_t, dB_t\rangle_{\R^m}\int_0^T\left\langle  \E[\frac{d}{dt}(\I^*\phi)^{\sharp}_t|\F_t], dB_t\right\rangle_{\R^m}d\gamma\\
&=&
\int_{C_0}\int_0^T\left\langle  \dot{a}_t, \E[\frac{d}{dt}(\I^*\phi)^{\sharp}_t|\F_t]\right\rangle_{\R^m}dt\,d\gamma\\
&=&
\int_{C_0}\langle  a, (\I^*\phi)^{\sharp}\rangle_Hd\gamma\\
&=&
\int_{C_0} \phi[\overline{T\I(a)}]\circ\I \,d\gamma\\
&=&
\int_{C_{x_0}}\phi_\sigma[\overline{T\I(a)}_\sigma]d\mu_{x_0}(\sigma)\\
&=&
\int_{C_{x_0}}\phi_\sigma\left(\overline{T\I_\sigma}[\E(a_.|\I=\sigma)]\right)d\mu_{x_0}(\sigma),
\end{eqnarray*}
where the fourth line follows from Corollary 3.7 of \cite{elworthy2007ito}, and the last from 
equation (\ref{eq:2TI}) and the adaptedness of $\{a_t\}_{t\in [0,T]}$.
\end{proof}
\bibliographystyle{amsplain} 
\bibliography{../../wt}
\end{document}